\providecommand\@dotsep{5}
\def\listtodoname{List of Todos}
\def\listoftodos{\@starttoc{tdo}\listtodoname}
\numberwithin{equation}{section}
\newtheorem{theorem}{Theorem}[section]
\newtheorem{lemma}[theorem]{Lemma}
\newtheorem{definition}[theorem]{Definition}
\newtheorem{proposition}[theorem]{Proposition}
\newtheorem{remark}[theorem]{Remark}
\title[Nonlinear Schr\"odinger-Bopp-Podolsky-Proca system]{Multiple solutions and profile description for a nonlinear Schr\"odinger-Bopp-Podolsky-Proca system on a manifold}
\author[P. d'Avenia]{Pietro d'Avenia}
\address[P. d'Avenia]{\newline\indent
	Dipartimento di Meccanica, Matematica e Management 
	\newline\indent
	Politecnico di Bari
	\newline\indent
	Via E. Orabona 4
	\newline\indent
	70125 Bari, Italy.}
\email{\href{mailto:pietro.davenia@poliba.it}{pietro.davenia@poliba.it}}
\author[M.G. Ghimenti]{Marco G. Ghimenti}
\address[M.G. Ghimenti]{\newline\indent
	Dipartimento di Matematica 
	\newline\indent
	Universit\`a di Pisa
	\newline\indent
	Largo B. Pontecorvo, 5
	\newline\indent
	56126 Pisa, Italy.}
\email{\href{mailto:marco.ghimenti@unipi.it}{marco.ghimenti@unipi.it}}
\thanks{The authors are members of GNAMPA (INdAM).\\
	Pietro d'Avenia is partially supported by PRIN 2017JPCAPN {\em Qualitative and quantitative aspects of nonlinear PDEs} and by GNAMPA project {\em Modelli EDP nello studio problemi della fisica moderna}.
	Marco G. Ghimenti is partially supported by GNAMPA project {\em Modelli matematici con singolarit\`a per fenomeni di interazione}.}
\subjclass[2010]{35J20, 35Q55, 53C80.}
\date{\today}
\keywords{Nonlinear Schr\"odinger equation, Bopp-Podolsky electromagnetic theory, Variational methods, Compact Riemannian manifolds, Category theory}
\begin{document}

\begin{abstract}
We prove a multiplicity result for
\begin{equation*}
	\begin{cases}
		-\varepsilon^{2}\Delta_g u+\omega u+q^{2}\phi u=|u|^{p-2}u\\
		-\Delta_g \phi +a^{2}\Delta_g^{2} \phi + m^2 \phi =4\pi u^{2}
	\end{cases}
	\text{ in }M,
\end{equation*}
where $(M,g)$ is a smooth and compact $3$-dimensional Riemannian manifold without boundary, $p\in(4,6)$, $a,m,q\neq 0$, $\varepsilon>0$ small enough. The proof of this result relies on Lusternik-Schnirellman category. We also provide a profile description for low energy solutions.
\end{abstract}

\maketitle


\section{Introduction}

In this paper we study the system
\begin{equation}\label{eq:BPP}
	\begin{cases}
	-\varepsilon^{2}\Delta_g u+\omega u+q^{2}\phi u=|u|^{p-2}u\\
	-\Delta_g \phi +a^{2}\Delta_g^{2} \phi + m^2 \phi =4\pi u^{2}
	\end{cases}
\text{ in }M,
\end{equation}
where $(M,g)$ is a smooth and compact $3$-dimensional Riemannian manifold without boundary, $\Delta_g$ is the Laplace-Beltrami operator, $u,\phi:M\to\mathbb{R}$, $p\in(4,6)$, $a,m,q\neq 0$, $\varepsilon>0$.\\
System \eqref{eq:BPP} can be obtained starting from the classical nonlinear Schr\"odinger Lagrangian density
\begin{equation}\label{Schlagrdens}
\mathcal{L}_{\rm S}(\psi)
:=i\hbar \partial_t \overline{\psi}
-\frac{\hbar^2}{2m_0^2} |\nabla_g \psi|^2
+\frac{2}{p}|\psi|^p,
\end{equation}
with $\psi:\mathbb{R}\times M \to \mathbb{C}$, $\hbar>0$ (the Plank constant), and $m_0\neq 0$.\\
Such a Lagrangian density describes a charged particle $\psi$ and, to study it in the electromagnetic field $({\bf E},{\bf B})$ generated by itself motion, it is usual to apply the {\em minimal coupling rule}. This consists into replacing in \eqref{Schlagrdens} the usual temporal and spatial derivatives $\partial_t, \nabla_g$, with the gauge covariant ones
$$\partial_t + i \frac{q}{\hbar}\phi,
\quad
\nabla_g - i \frac{q}{\hbar c}{\bf A},$$
where $(\phi,{\bf A})$ is the gauge potential related to $({\bf E},{\bf B})$, $q$ is a coupling constant, and $c$ is the speed of light, obtaining
\[
\mathcal{L}_{\rm Coupl} (\psi,\phi,{\bf A})
:=
i\hbar \partial_t \overline{\psi} - q \phi |\psi|^2
-\frac{\hbar^2}{2m_0^2} |\nabla_g \psi-i\frac{q}{\hbar c}{\bf A} \psi |^2
+\frac{2}{p}|\psi|^p.
\]
To this Lagrangian density we have to add the electromagnetic field one.
Of course this step implies the choice of an electromagnetic theory.
In our case we consider the Bopp-Podolsky one in the Proca setting that is
\begin{align*}
\mathcal{L}_{\rm BPP}(\phi,{\bf A})
&:=
\frac{1}{8\pi}
\left\{
|\nabla_g \phi + \frac{1}{c} \partial_t {\bf A}|^2
- |\nabla_g \times {\bf A}|^2
+ m^2 (|\phi|^2-|{\bf A}|^2)
\right.\\
&\qquad\qquad
\left.
+ a^2
\Big[
(
\Delta_g \phi + \frac{1}{c} \nabla_g \cdot \partial_t {\bf A}
)^2
-
|
\nabla_{g}\times\nabla_{g}\times {\bf A}
+\frac{1}{c} \partial_t (\nabla_g \phi
+ \frac{1}{c} \partial_t{\bf A}
)
|^2
\Big]
\right\}.
\end{align*}
It was introduced, without the Proca term, namely with $m=0$, independently in \cite{B} and \cite{P} as an higher-order perturbation of the classical Maxwell theory, to solve its {\em infinity problem}: the energy of the electromagnetic field generated by a pointwise charge, in the electrostatic case, is not finite.\\
Hence, the Euler-Lagrange equations for the total action
\[
\mathcal{S}_{\rm tot} (\psi,\phi,{\bf A}):=
\iint [\mathcal{L}_{\rm Coupl} (\psi,\phi,{\bf A})
+\mathcal{L}_{\rm BPP}(\phi,{\bf A})] d\mu_g dt
\]
in the purely electrostatic case, for standing waves $\psi(t,x)=e^{i\omega t} u(x)$, normalizing some parameter, and applying some rescaling, give \eqref{eq:BPP}. For more details we refer the reader to \cite{DS,H20}.

In the last few years a wide literature on this topic is developing both in $\mathbb{R}^3$ (see \cite{CLRT,CT,FSZamp,FS,LPT,MS,PJ,KJDE,Z} and references therein) and on manifolds, due to Hebey (see \cite{HebDCDS,H20,HebCCM,HebADE}).

In this paper we want to prove the  following multiplicity result for \eqref{eq:BPP} in the spirit of the semiclassical limit.
\begin{theorem}\label{th:main}
	Let $4 < p < 6$ and $2am<1$.
	For $\varepsilon$ small enough we have at least $\operatorname{cat}(M)$ nonconstant positive solutions $(u_\varepsilon,\phi_\varepsilon )$ of (\ref{eq:BPP}) with {\em low energy}. 
	 The functions $u_\varepsilon$ have a unique maximum point $P_\varepsilon$ and 
	$u_\varepsilon = W_{\varepsilon,P_\varepsilon} +R_\varepsilon$ where $W_{\varepsilon,P_\varepsilon}$ is defined in \eqref{Wxieps} and $|R_\varepsilon|_{\infty}\rightarrow 0$ as $\varepsilon\rightarrow 0$. 
	In addition, $\|\phi_\varepsilon\|_{C^2(M)}\rightarrow 0$ as $\varepsilon\rightarrow 0$. 
	Finally, there exists at least a further nonconstant positive solution of  (\ref{eq:BPP}) with {\em higher energy}.
\end{theorem}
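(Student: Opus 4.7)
The plan is to follow the variational/topological scheme of Benci--Cerami, adapted to the nonlocal setting of the Bopp--Podolsky--Proca second equation. First I would reduce \eqref{eq:BPP} to a single equation for $u$: since $2am<1$, the fourth-order operator $\mathcal{L}_{a,m}:=-\Delta_g+a^{2}\Delta_g^{2}+m^{2}\operatorname{Id}$ factors as the product of two second-order positive elliptic operators, hence is self-adjoint, coercive and invertible. In particular, for every $u\in H^{1}(M)$ there is a unique $\phi[u]\in H^{2}(M)$ with $\mathcal{L}_{a,m}\phi[u]=4\pi u^{2}$, the map $u\mapsto\phi[u]$ is smooth, and it satisfies uniform bounds of the type $\|\phi[u]\|_{H^{2}}\lesssim\|u\|_{H^{1}}^{2}$. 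Substituting, \eqref{eq:BPP} becomes the Euler--Lagrange equation of the reduced functional
\[
I_{\varepsilon}(u):=\frac{1}{2}\int_{M}\!\bigl(\varepsilon^{2}|\nabla_g u|_g^{2}+\omega u^{2}\bigr)d\mu_g+\frac{q^{2}}{4}\int_{M}\!\phi[u]u^{2}\,d\mu_g-\frac{1}{p}\int_{M}\!|u|^{p}\,d\mu_g,
\]
whose critical points are in one-to-one correspondence with solutions of the system.

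The second step is to identify the limit profile in the semiclassical regime. The blow-up $v(y):=u(\exp_{P}(\varepsilon y))$ around a point $P\in M$ removes the $\varepsilon^{2}$ in front of the Laplacian, and the nonlocal term becomes negligible under this scaling; one is thus led to the autonomous equation $-\Delta U+\omega U=U^{p-1}$ on $\mathbb{R}^{3}$, which has a unique positive radial ground state $U$ of energy $m_{\infty}$. The function $W_{\varepsilon,P}$ in \eqref{Wxieps} is the $\varepsilon$-rescaled copy of $U$ transplanted to $M$ through a truncated exponential chart at $P$. The technical core is the uniform asymptotic expansion
\[
I_{\varepsilon}(t_{\varepsilon,P}W_{\varepsilon,P})=\varepsilon^{3}m_{\infty}+o(\varepsilon^{3})\qquad\text{as }\varepsilon\to 0,\ \text{uniformly in }P\in M,
\]
where $t_{\varepsilon,P}>0$ is the unique Nehari projector. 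The main obstacle, and where I expect the hardest estimates to lie, is to control the nonlocal Bopp--Podolsky--Proca contribution $\int_{M}\phi[W_{\varepsilon,P}]W_{\varepsilon,P}^{2}\,d\mu_g$ and show that it is of strictly higher order than $\varepsilon^{3}$: this requires a careful analysis of the Green function of $\mathcal{L}_{a,m}$, whose factorization under the assumption $2am<1$ ensures positivity and exponential decay, and guarantees that the nonlocal term does not affect the topology of the low-energy sublevels.

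With the uniform expansion at hand, the Benci--Cerami machinery produces the low-energy solutions. Define $\Phi_{\varepsilon}\colon M\to\mathcal{N}_{\varepsilon}$ by $\Phi_{\varepsilon}(P):=t_{\varepsilon,P}W_{\varepsilon,P}$ and a barycenter map
\[
\beta(u):=\Pi_{M}\!\left(\frac{\int_{M}x\,|u|^{p}\,d\mu_g}{\int_{M}|u|^{p}\,d\mu_g}\right),
\]
where $\Pi_{M}$ is the nearest-point projection from a tubular neighborhood of $M\hookrightarrow\mathbb{R}^{N}$. For $\delta>0$ small and $\varepsilon$ small enough, the expansion forces $\beta\circ\Phi_{\varepsilon}$ to be homotopic to $\operatorname{id}_{M}$ on the sublevel $\{u\in\mathcal{N}_{\varepsilon}:I_{\varepsilon}(u)\le\varepsilon^{3}(m_{\infty}+\delta)\}$. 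Since $p\in(4,6)$ guarantees coercivity on $\mathcal{N}_{\varepsilon}$ and Palais--Smale in this energy range, Lusternik--Schnirelmann theory yields at least $\operatorname{cat}(M)$ critical points of $I_{\varepsilon}$; positivity comes from a standard truncation argument and non-constancy is automatic, since any nonzero constant solution of \eqref{eq:BPP} would carry energy of order $|M|$, incompatible with the $\varepsilon^{3}$ threshold.

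It remains to establish the profile description and the extra high-energy solution. Given any low-energy solution $u_{\varepsilon}$ and a maximum point $P_{\varepsilon}$, a concentration-compactness analysis of the blow-up $v_{\varepsilon}(y):=u_{\varepsilon}(\exp_{P_{\varepsilon}}(\varepsilon y))$ shows $v_{\varepsilon}\to U$ in $H^{1}(\mathbb{R}^{3})$, which elliptic regularity upgrades to the $L^{\infty}$ decomposition $u_{\varepsilon}=W_{\varepsilon,P_{\varepsilon}}+R_{\varepsilon}$ with $|R_{\varepsilon}|_{\infty}\to 0$; uniqueness of $P_{\varepsilon}$ follows from the strict concavity of $U$ at its peak together with the $C^{1}_{\mathrm{loc}}$ convergence. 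The decay $\|\phi_{\varepsilon}\|_{C^{2}(M)}\to 0$ is then immediate from Schauder estimates applied to $\mathcal{L}_{a,m}\phi_{\varepsilon}=4\pi u_{\varepsilon}^{2}$, since $u_{\varepsilon}\to 0$ in $L^{\infty}(M)$. Finally, the additional solution of higher energy is obtained by a by-now classical relative-category argument: if all critical points of $I_{\varepsilon}$ coincided with the $\operatorname{cat}(M)$ low-energy ones already found, the category of the pair formed by an appropriate mountain-pass sublevel and the low-energy sublevel would be trivial, contradicting the non-trivial topology produced by a min--max class at the higher level.
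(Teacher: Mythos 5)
Your proposal follows essentially the same route as the paper: reduce the system to a single scalar equation via the solution operator of the fourth-order Bopp--Podolsky--Proca operator, work on the Nehari manifold, build test functions $W_{\xi,\varepsilon}$ from the ground state $U$ of the limiting equation, prove uniform energy asymptotics, implement the Benci--Cerami photography scheme with the barycenter map, and finish with Lusternik--Schnirelmann plus blow-up analysis for the profile. Two remarks on where the details deviate. First, you anticipate the hardest work to lie in Green-function estimates for $\mathcal{L}_{a,m}$; in fact the paper never touches the Green function: the crude a priori bound $\|\phi_u\|_{H^2}\le C|u|_2^2$ from Lemma~\ref{lem:BP} already gives $\frac{1}{\varepsilon^3}\int_M W_{\xi,\varepsilon}^2\phi_{W_{\xi,\varepsilon}}\,d\mu_g\le C\varepsilon^3|W_{\xi,\varepsilon}|_{2,\varepsilon}^4\to 0$, which is all that is needed for the nonlocal term to disappear from the expansion. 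Second, your claim that uniqueness of the maximum point ``follows from the strict concavity of $U$ at its peak together with $C^1_{\mathrm{loc}}$ convergence'' only handles the regime where two putative maxima remain at bounded rescaled distance $d_g(P^1_\varepsilon,P^2_\varepsilon)/\varepsilon$; the paper needs a separate argument (Lemmas~\ref{lem:2max}--\ref{lem:unimax}) showing that if this rescaled distance diverges, then the solution carries two disjoint $U$-bubbles and hence energy $>m_\infty+\delta$, contradicting the low-energy constraint. Your assertion of global $H^1(\mathbb{R}^3)$ convergence of the blow-up would imply this, but establishing it is precisely where that energy argument is used, so the logic as written is circular. The extra high-energy solution is obtained in the paper not via relative category but by exhibiting an explicit contractible cone $T_\varepsilon\subset\mathcal{N}_\varepsilon$ with uniformly bounded sup-energy and invoking the deformation-lemma consequence recorded in Theorem~\ref{prelim}; the two formulations are equivalent in spirit.
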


Here $\operatorname{cat}(M)$ is the Lusternik-Schnirellman category which is hereafter defined.

\begin{definition}
	\label{def:cat}Let $X$ a topological space and consider a closed
	subset $A\subset X$. We say that $A$ has category $k$ relative
	to $X$ ($\operatorname{cat}_{X}A=k$) if $A$ is covered by $k$ closed sets $A_{j}$,
	$j=1,\dots,k$, which are contractible in $X$, and $k$ is the minimum
	integer with this property. We simply denote $\operatorname{cat} X=\operatorname{cat}_{X}X$.\end{definition}

The proof relies on a topological method, sometimes called {\em photography method}, which was firstly introduced for 
an elliptic nonlinear critical equation in a bounded domain by Bahri and Coron  \cite{BaC}, then readapted by Benci and Cerami \cite{BC} and by Benci, Cerami, and Passaseo \cite{BCP}, and thereafter used in a wide class of elliptic problems, both on domains and on manifolds. 

The main idea of this method is to establish a connection between the functions which have low energy and  the domain of the equation. Roughly speaking, on the one hand it is possible to construct, for any point of the domain, a function which is peaked in this point, and, on the other hand, it can be showed that any function, for which the energy functional is sufficiently small, is concentrated around a point of the domain. Once this link is set, one can prove that the topology of the low energy functions is at least rich as the topology of the domain, and use classical results to link this topology to the number of solutions. This methods are particularly useful on manifolds, since, for example, for any compact smooth manifold without boundary it holds $\operatorname{cat} M \ge 2$.

To prove that a low energy function is concentrated around a point of the manifold, we have to consider manifolds smoothly embedded in some euclidean space $\mathbb{R}^N$, for a suitable $N$. This does not imply a loss of generality: such an embedding exists for any smooth compact Riemannian manifold. 

In the context of Riemannian manifolds, this method has been used for the nonlinear Klein-Gordon-Maxwell-Proca system in \cite{GM} (see  references therein for different situations).

A similar topological method has also been used by Siciliano, Figueiredo, and Mascaro in \cite{FS,MS} to obtain multiplicity result of the doubly perturbed nonlinear Schr\"odinger-Bopp-Podolsky system
\begin{equation*}
	\begin{cases}
	-\varepsilon^{2}\Delta u+V(x) u+\lambda\phi u=f(u)\\
	-\varepsilon^{2}\Delta \phi +\varepsilon^{4}\Delta^{2} \phi = u^{2}
	\end{cases}
\text{ in }\mathbb{R}^3.
\end{equation*}
The number of solutions is related to the Lusternik-Schnirelmann category of the set $\{x\in \mathbb{R}^3\ :\ V(x)=\min V\}$.


We would like to make a final comment on the exponent $p\in(4,6)$. In our setting, this condition is necessary since one of our main tools is the Nehari manifold: it is a natural constraint and, for $p>4$, it is smooth and the energy functional on such a constraint is bounded from below.
Dealing with systems in $\mathbb{R}^3$, the condition on $p$ tipically could be loosened using,  for example, other additional natural constraints, as the Pohozaev constraint or a {\em linear combination} of Nehari and Pohozaev identities (see \cite{DS}). In the Riemannian manifold setting, Pohozaev type identities carry on extra terms and they can be hard to handle.

Our paper is organized as follows. Section \ref{sec:prelim} contains a list of useful results and all the preliminary definitions. In Section  \ref{sec:funct} the variational framework of the problem is presented, while the rest of the paper is devoted to the proof of  Theorem \ref{th:main}. In particular, the connection between low energy function and the manifold $M$ is proved in Section \ref{sec:low}, as well as the main topological result to obtain $\operatorname{cat}(M)$ low energy solutions. These solutions turn out to be nontrivial and a description of their profile is given in Section \ref{sec:prof}. Finally we prove that there exists an additional nontrivial solution in Section  \ref{sec:further}, concluding the proof of Theorem \ref{th:main}.\\
For the sake of readability, we have collected two technical and somewhat classical proof in the Appendix.



\section{Preliminaries}\label{sec:prelim}
From now on, for the sake of simplicity, we will take $\omega=q=m=1$ in \eqref{eq:BPP}.

In the following we will use the notation
$$\|v\|_{H^2}^{2}:=\int_{M}(a^{2}|\Delta_g v|^{2}+|\nabla_g v|^{2}+v^{2})d\mu_{g},
\quad
\|v\|_{H^{1}}^{2}:=\int_{M}(|\nabla_g v|^{2}+ v^{2})d\mu_{g},
\quad
|v|_{p}^{p}  :=\int_{M}|v|^{p}d\mu_{g}
$$
for the norms on $H^{2}(M)$, $H^1(M)$, and $L^p(M)$, and, with abuse of notation, also for the respective norms in $\mathbb{R}^3$. Moreover, for every fixed $\varepsilon>0$, we will use
\[
\|v\|_{\varepsilon}^{2}  :=\frac{1}{\varepsilon}\int_{M}|\nabla_g v|^{2}d\mu_{g}+\frac{1}{\varepsilon^{3}}\int_Mv^{2}d\mu_{g},
\quad
|v|_{p,\varepsilon}^{p}  :=\frac{1}{\varepsilon^{3}}\int_{M}|v|^{p}d\mu_{g}.
\]
We recall that there exists $C>0$, indipendent of $\varepsilon$, such that, for every $p\in [1,6]$,
\begin{equation}\label{imb}
	|v|_{p,\varepsilon}\le C\|v\|_{\varepsilon}.
\end{equation}

Now let us recall some known properties about the second equation in \eqref{eq:BPP}, whose proof can be found in \cite[Lemma 3.1 and Lemma 4.1]{HebDCDS}.\footnote{Without the normalization of the constants, our results hold for every $\omega,q>0$ and $2am<1$.}

\begin{lemma}
\label{lem:BP}For every $u\in H^{1}(M)$ there exists a unique $\phi_{u}\in H^{4}(M)\cap C^2(M)$ solution of 
\begin{equation}
-\Delta_g v+a^{2}\Delta_g^{2}v+v=4\pi u^{2}\label{eq:BP2}
\quad\text{ in } M
\end{equation}
and
\begin{enumerate}[label=(\alph{*}), ref=\alph{*}]
	\item \label{aLem:BP} there exists $C>0$
	such that, for every $u\in H^{1}(M)$, $\|\phi_{u}\|_{H^{2}}\le C |u|_{2}^{2}$ and $\|\phi_{u}\|_{H^{4}}\le C |u|_{4}^{2}$;
	\item \label{bLem:BP} if $a<1/2$, then, for every $u\in H^{1}(M)$, $\phi_u\ge0$.
\end{enumerate}
\end{lemma}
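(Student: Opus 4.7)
The plan is to first establish existence and uniqueness by a Riesz representation argument in $H^2(M)$, then bootstrap to $H^4\cap C^2$ regularity via standard elliptic theory, and finally exploit a factorization of the fourth-order operator to derive the positivity claim (b).

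For existence and uniqueness I would equip $H^2(M)$ with the inner product
\[
\langle v,w\rangle := \int_M \left(a^2 \Delta_g v \, \Delta_g w + \nabla_g v \cdot \nabla_g w + vw\right) d\mu_g,
\]
whose associated norm is exactly $\|\cdot\|_{H^2}$. Since $M$ is a compact $3$-manifold, $H^2(M)\hookrightarrow L^\infty(M)$, so for any $u\in H^1(M)\subset L^4(M)$ the linear functional $w\mapsto 4\pi\int_M u^2 w\,d\mu_g$ is continuous on $H^2(M)$. Riesz's theorem then yields a unique $\phi_u\in H^2(M)$ representing it, which is precisely the weak form of \eqref{eq:BP2}. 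Testing against $\phi_u$ itself and using H\"older together with $H^2\hookrightarrow L^\infty$ gives
\[
\|\phi_u\|_{H^2}^2 = 4\pi\int_M u^2 \phi_u\,d\mu_g \le 4\pi|u|_2^2\,|\phi_u|_\infty \le C|u|_2^2\,\|\phi_u\|_{H^2},
\]
hence the first bound in (a). Writing $L := I-\Delta_g+a^2\Delta_g^2$, the equation reads $L\phi_u = 4\pi u^2 \in L^2(M)$ with $|u^2|_2 = |u|_4^2$, so standard elliptic regularity for the strongly elliptic fourth-order operator $L$ (smooth coefficients, $M$ compact without boundary) gives $\phi_u\in H^4(M)$ and $\|\phi_u\|_{H^4} \le C(|u|_4^2+\|\phi_u\|_{L^2}) \le C|u|_4^2$. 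The embedding $H^4(M)\hookrightarrow C^2(M)$ in dimension $3$ closes (a) and yields the $C^2$ regularity.

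For (b) the key observation is the factorization $1+t+a^2 t^2 = (1+\alpha t)(1+\beta t)$, where $\alpha,\beta$ are the roots of $X^2-X+a^2=0$; the discriminant is $1-4a^2>0$ iff $a<1/2$, in which case both roots are strictly positive. Hence $L=(I-\alpha\Delta_g)(I-\beta\Delta_g)$, and setting $\psi := (I-\beta\Delta_g)\phi_u$ the equation becomes $(I-\alpha\Delta_g)\psi = 4\pi u^2 \ge 0$. A weak maximum principle for the second-order operator $I-\alpha\Delta_g$ on the closed manifold (obtained by testing against $\psi^-$ and integrating by parts) forces $\psi\ge 0$; applying the same argument to $(I-\beta\Delta_g)\phi_u = \psi\ge 0$ yields $\phi_u\ge 0$.

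The main obstacle is the positivity statement (b): one must spot the factorization trick and notice that the threshold $a=1/2$ is sharp, since beyond it the roots $\alpha,\beta$ become complex and the two-step maximum principle no longer works over the reals. The $H^4$ estimate also requires some care because one must invoke elliptic regularity for a fourth-order operator on a manifold; this is classical but deserves to be cited rather than reproved.
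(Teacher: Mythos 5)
Your proposal is correct and complete. The paper does not actually prove this lemma itself; it cites it to Hebey \cite[Lemma 3.1 and Lemma 4.1]{HebDCDS}, so there is no ``paper's own proof'' to compare against line by line. That said, your argument is precisely the standard route, and as far as I can tell it is essentially what Hebey does: Riesz representation in $H^2(M)$ for existence/uniqueness, bootstrapping to $H^4\hookrightarrow C^{2,\alpha}$ via fourth-order elliptic regularity, and, for positivity, the factorization $a^2T^2+T+I=(I+\alpha T)(I+\beta T)$ with $T=-\Delta_g$, where $\alpha,\beta$ are the roots of $X^2-X+a^2=0$ and the threshold $a<1/2$ is exactly what makes those roots real and positive. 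Your two-step weak maximum principle (testing with $\psi^-$ and then with $\phi_u^-$) is the right way to close part (\ref{bLem:BP}), and it correctly explains where the hypothesis $2am<1$ (here $a<1/2$ after normalizing $m=1$) enters. Two minor cosmetic remarks: when you write $\|\phi_u\|_{H^4}\le C(|u|_4^2+\|\phi_u\|_{L^2})\le C|u|_4^2$ you are implicitly using $|u|_2^2\le|M|^{1/2}|u|_4^2$ to absorb the zeroth-order term, which is fine on a compact manifold but worth stating; and after applying Riesz you should note that the weak $H^2$ solution is the one you subsequently bootstrap, so uniqueness in $H^4\cap C^2$ follows from uniqueness in $H^2$. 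Neither affects the soundness of the argument.
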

In view of the previous Lemma we write \eqref{eq:BPP} as
\begin{equation}\label{eq:BPPnl}
-\varepsilon^{2}\Delta_g u+ u+\phi_u u=|u|^{p-2}u
\qquad
\text{ in } M.
\end{equation}

Moreover we will use the further results that involve $\phi_u$.
\begin{lemma}\label{lem:BP2}
The map $\Phi:=u\in H^{1}(M) \mapsto \phi_{u}\in H^{2}$ is $C^{2}$ and, for every $u\in H^1(M)$ and $h,k\in H^1(M)$, $\Phi'(u)[h]$ and $\Phi''(u)[h,k]$ are the unique solutions of
	\begin{equation}
		-\Delta_g v+a^{2}\Delta_g^{2}v+v=8\pi uh \quad\text{ in } M \label{eq:der1}
	\end{equation}
and
\begin{equation}
	-\Delta_g v+a^{2}\Delta_g^{2}v+v=8\pi hk \quad\text{ in } M,\label{eq:der2}
\end{equation}
respectively.\\
Moreover, for every $t\in\mathbb{R}$ and $u\in H^{1}(M)$, $\Phi(tu)=t^2\Phi(u)$ and if $\{u_n\}\subset H^1(M)$ converges weakly to $\bar{u}$ in $H^1(M)$, then, up to a subsequence, $\Phi(u_n)\to\Phi(\bar{u})$ in $H^2(M)$.
\end{lemma}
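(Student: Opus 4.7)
The strategy is to write $\Phi = L^{-1} \circ T$, where $L : H^2(M) \to (H^2(M))^*$ is the bounded linear operator associated with $v \mapsto -\Delta_g v + a^2\Delta_g^2 v + v$ via the bilinear form $\int_M (a^2 \Delta_g v \Delta_g w + \nabla_g v \cdot \nabla_g w + vw)\,d\mu_g$, which is coercive on $H^2(M)$ and hence makes $L$ an isomorphism by Lax-Milgram, while $T(u) := 4\pi u^2$ is viewed as a polynomial map into $(H^2(M))^*$ through the pairing $\langle T(u), w\rangle := \int_M 4\pi u^2 w\,d\mu_g$. The only analytic ingredient needed beyond this is the Sobolev embedding $H^1(M) \hookrightarrow L^4(M)$, sharp because $\dim M = 3$, together with its compact version from Rellich-Kondrachov.

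For the $C^2$ regularity and the identification of $\Phi'(u)[h]$ and $\Phi''(u)[h,k]$, I would exploit the fact that $T$ is a polynomial of degree two, so
\begin{align*}
	T(u+h) - T(u) = 8\pi u h + 4\pi h^2.
\end{align*}
Applying $L^{-1}$, the candidate first derivative is the unique $H^2$-solution $\Phi'(u)[h] := L^{-1}(8\pi uh)$, which by definition of $L$ solves \eqref{eq:der1}, and the remainder $\Phi(u+h) - \Phi(u) - \Phi'(u)[h]$ coincides with $L^{-1}(4\pi h^2)$. The a priori bound from Lemma \ref{lem:BP} (or directly from Lax-Milgram) then gives
\begin{align*}
	\|\Phi(u+h) - \Phi(u) - \Phi'(u)[h]\|_{H^2} \le C |h^2|_2 \le C|h|_4^2 \le C\|h\|_{H^1}^2,
\end{align*}
which confirms Fréchet differentiability. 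The same estimates applied to the linear-in-$u$ map $u \mapsto \Phi'(u)$ identify $\Phi''(u)[h,k]$ with $L^{-1}(8\pi hk)$, which solves \eqref{eq:der2}; as $\Phi''$ does not depend on $u$, its continuity in $u$ is trivial.

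The homogeneity $\Phi(tu) = t^2\Phi(u)$ follows at once from the linearity of $L$: both sides solve $Lv = 4\pi t^2 u^2$, hence coincide by uniqueness. For the weak-to-norm continuity, assume $u_n \rightharpoonup \bar u$ in $H^1(M)$. By Rellich-Kondrachov compactness $u_n \to \bar u$ in $L^4(M)$, so $|u_n^2 - \bar u^2|_2 \le |u_n - \bar u|_4 |u_n + \bar u|_4 \to 0$ since $\{u_n + \bar u\}$ stays bounded in $L^4$. Subtracting the two equations yields $L(\phi_{u_n} - \phi_{\bar u}) = 4\pi(u_n^2 - \bar u^2)$; testing with $\phi_{u_n} - \phi_{\bar u}$ then gives
\begin{align*}
	\|\phi_{u_n} - \phi_{\bar u}\|_{H^2}^2 \le 4\pi |u_n^2 - \bar u^2|_2 \,\|\phi_{u_n} - \phi_{\bar u}\|_{H^2},
\end{align*}
whence $\phi_{u_n} \to \phi_{\bar u}$ in $H^2(M)$.

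I do not foresee any essential obstacle: the whole argument is a formal polynomial computation coupled with the isomorphism property of $L$. The one step that demands mild care is the bookkeeping of Sobolev exponents; it is precisely the (compact) embedding $H^1(M) \hookrightarrow L^4(M)$, valid because $\dim M = 3$, that simultaneously makes the remainder $L^{-1}(4\pi h^2)$ quadratic in $\|h\|_{H^1}$ and delivers strong $L^2$ convergence of $u_n^2$.
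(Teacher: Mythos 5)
Your proposal is correct, and for the differentiability part it is essentially the paper's argument in mildly different notation: both identify the candidate derivatives by exploiting that the right-hand side is a quadratic polynomial, so the remainder after subtracting the linear equation is exactly the solution with source $4\pi h^2$ (respectively zero), and both control that remainder through the $H^1(M)\hookrightarrow L^4(M)$ embedding and the a priori bound of Lemma~\ref{lem:BP}. The one place you genuinely diverge is the weak-to-strong convergence. The paper proceeds in two stages: first it shows $\Phi(u_n)\rightharpoonup\Phi(\bar u)$ weakly in $H^2(M)$ by pairing with arbitrary $\varphi\in H^2(M)$, and then it shows $\|\Phi(u_n)\|_{H^2}\to\|\Phi(\bar u)\|_{H^2}$ to upgrade to strong convergence. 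You instead subtract the two equations, test the difference against itself, and read off the one-line estimate $\|\phi_{u_n}-\phi_{\bar u}\|_{H^2}\le 4\pi\,|u_n^2-\bar u^2|_2\to 0$. Your route is shorter, yields an explicit quantitative bound, and in fact gives convergence along the whole sequence rather than a subsequence, since the compact embedding $H^1(M)\hookrightarrow L^4(M)$ sends a weakly convergent sequence to a norm-convergent one. It is the same coercivity estimate the paper already deploys to prove continuity of $\Phi'$; you simply apply it uniformly to $\Phi$ itself.
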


\begin{proof}
Let $\tilde{\phi}$ be the unique solution of \eqref{eq:der1} in $H^2(M)$. Then
\[
\Phi(u+h)-\Phi(u) - \tilde{\phi}=\Phi(h)
\]
since
\[
-\Delta_{g} [\Phi(u+h)-\Phi(u)-\tilde{\phi}]
+a^2 \Delta_{g}^2 [\Phi(u+h)-\Phi(u)-\tilde{\phi}]
+ [\Phi(u+h)-\Phi(u)-\tilde{\phi}]
=4\pi h^2
\]
and, since, by Lemma \ref{lem:BP}, as $h\to 0$ in $H^1(M)$, $\| \Phi(h)\|_{H^2}/\|h\|_{H^1}\leq C \|h\|_{H^1} \to 0$, we get that $\Phi'(u)[h]=\tilde{\phi}$.\\
Analogously, if $\tilde{\psi}$ is the unique solution of \eqref{eq:der2} in $H^2(M)$, then, using that for every $h\in H^1(M)$, $\Phi'(u)[h]$ is the unique solution of \eqref{eq:der1}, we have
\[
-\Delta_{g} [\Phi'(u+k)[h]-\Phi'(u)[h]-\tilde{\psi}]
+a^2 \Delta_{g}^2 [\Phi'(u+k)[h]-\Phi'(u)[h]-\tilde{\psi}]
+ [\Phi'(u+k)[h]-\Phi'(u)[h]-\tilde{\psi}]
=0
\]
and so
\[
\Phi'(u+k)[h]-\Phi'(u)[h]-\tilde{\psi}=0.
\]
This easily implies that $\Phi''(u)[h,k]=\tilde{\psi}$.\\
To show that the map $u\in H^1(M)\mapsto\Phi'(u)$ is continuous we observe that, if $u_n\to u$ in $H^1(M)$, then $\Phi'(u_n)[h]-\Phi'(u)[h]$ is the unique solution of
\[
-\Delta_{g} v +a^2 \Delta_{g}^2 v + v
=
8\pi (u_n - u) h.
\]
Thus
\begin{align*}
\|\Phi'(u_n)[h]-\Phi'(u)[h]\|_{H^2}^2
&=
8\pi \int_M [\Phi'(u_n)[h]-\Phi'(u)[h]](u_n - u) h d\mu_{g}\\
&\leq
C
\|\Phi'(u_n)[h]-\Phi'(u)[h]\|_{H^2}
\|u_n - u\|_{H^1} \|h\|_{H^1}
\end{align*}
that allows us to conclude easily.\\
Analogously we can prove that the map $u\in H^1(M)\mapsto\Phi''(u)$ is continuous using that $\Phi''(u_n)[h]-\Phi'(u)[h]$ is the unique solution of
\[
-\Delta_{g} v +a^2 \Delta_{g}^2 v + v
=
0.
\]
Finally, the last part of the statement follows from
\begin{align*}
-\Delta_{g} \Phi(tu)
+a^2 \Delta_{g}^2 \Phi(tu)
+ \Phi(tu)
&=4\pi t^2 u^2
= t^2
[
-\Delta_{g} \Phi(u)
+a^2 \Delta_{g}^2 \Phi(u)
+ \Phi(u)
]\\
&=
-\Delta_{g} [t^2\Phi(u)]
+a^2 \Delta_{g}^2 [t^2\Phi(u)]
+[t^2 \Phi(u)]
\end{align*}
and observing that, since up to a subsequence, $u_{n}\to\bar{u}$ in $L^{\tau}(M)$ for $1\le \tau<6$, then, for any $\varphi \in H^2(M)$,
\begin{equation}\label{wc}
\langle\Phi(u_n),\varphi\rangle_{H^2}
= 4\pi \int_{M} u_n^2\varphi d\mu_g
\to 4\pi \int_{M} \bar{u}^2\varphi d\mu_g
= \langle\Phi(\bar{u}),\varphi\rangle_{H^2}
\end{equation}
and so $ \{\Phi(u_n)\}$ converges to $\Phi(\bar{u})$ weakly in $H^2(M)$ and, up to a subsequence, strongly in $L^{\tau}(M)$ for $\tau\geq 1$.\\
Moreover, by (\ref{aLem:BP}) in Lemma \ref{lem:BP} and \eqref{wc},
\begin{align*}
| \| \Phi(u_n)\|_{H^2}^2 - \| \Phi(\bar{u})\|_{H^2}^2 |
&=
4\pi 
\left| \int_M u_n^2 \Phi(u_n) d\mu_g - \int_M \bar{u}^2 \Phi(\bar{u}) d\mu_g  \right|
\\
&\leq
4\pi \left[
\int_M  |\Phi(u_n) - \Phi(\bar{u})| u_n^2 d\mu_g
+\left| \int_M \Phi(\bar{u}) [u_n^2 - \bar{u}^2]   d\mu_g  \right|\right]\\
&\leq
4\pi \left[
|\Phi(u_n) - \Phi(\bar{u})|_2 |u_n|_4^2 
+\left| \int_M \Phi(\bar{u}) [u_n^2 - \bar{u}^2]   d\mu_g  \right|\right]\to 0
\end{align*}
and we conclude.
\end{proof}
Now let us consider the functional
$$
G:=u\in H^{1}(M) \mapsto \int_{M}u^{2}\phi_{u}d\mu_{g}
$$
which is well defined and, for every $u\in H^1(M)$, since $\phi_u$ is the unique solution of \eqref{eq:BP2}, 
\begin{equation}
	G(u)=\frac{1}{4\pi}\|\phi_{u}\|_{H^{2}}^{2}\geq 0\label{eq:punto 3}
\end{equation}
and
\[
G(u)=0
\iff
\phi_u=0
\iff
u=0.
\]
It satisfies the following properties.
\begin{lemma}\label{lem:BP3}
For every $u\in H^1(M)$, $|G(u)| \leq C\|\phi_{u}\|_{H^2} \|u\|_{H^1}^2$. Moreover the functional $G$ is $C^{1}$, for every $u,h\in H^1(M)$,
\[
G'(u)[h]=4\int_{M}\phi_{u} u hd\mu_{g},
\]
and, if $\{u_n\}\subset H^1(M)$ converges weakly to $\bar{u}$ in $H^1(M)$, then, up to a subsequence, $G(u_n)\to G(\bar{u})$.
\end{lemma}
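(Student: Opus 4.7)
My plan is to treat the three claims separately, in order of increasing subtlety.

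\emph{The bound $|G(u)|\leq C\|\phi_u\|_{H^2}\|u\|_{H^1}^2$.} This is a direct consequence of H\"older's inequality combined with the Sobolev embeddings $H^2(M)\hookrightarrow L^2(M)$ and $H^1(M)\hookrightarrow L^4(M)$ (valid on a compact $3$-manifold). I would simply write $|G(u)|\leq |\phi_u|_2|u|_4^2$ and bound each factor.

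\emph{Differentiability and formula for $G'$.} I would expand $G(u+h)-G(u)$ and exploit the $C^2$ regularity of $\Phi$ from Lemma \ref{lem:BP2}, which yields $\phi_{u+h}=\phi_u+\Phi'(u)[h]+o(\|h\|_{H^1})$ in $H^2(M)$. Expanding and collecting the linear-in-$h$ contributions gives
$$G(u+h)-G(u)=\int_M u^2 \Phi'(u)[h]\, d\mu_g+2\int_M \phi_u u h\, d\mu_g+o(\|h\|_{H^1}).$$
The key step is to identify the first integral. Setting $L:=-\Delta_g+a^2\Delta_g^2+I$, Lemma \ref{lem:BP} gives $L\phi_u=4\pi u^2$ and Lemma \ref{lem:BP2} gives $L\Phi'(u)[h]=8\pi uh$. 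Since $M$ is boundaryless, two integrations by parts show that $L$ is self-adjoint on $H^2(M)$, so testing one equation against the solution of the other leads to
$$4\pi\int_M u^2 \Phi'(u)[h]\, d\mu_g=\int_M \phi_u\, L\Phi'(u)[h]\, d\mu_g=8\pi\int_M \phi_u u h\, d\mu_g.$$
Therefore $G'(u)[h]=4\int_M \phi_u u h\, d\mu_g$. Continuity of $G'$ will then follow from the continuity of $\Phi:H^1(M)\to H^2(M)$ together with a Hölder estimate.

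\emph{Weak sequential continuity.} Suppose $u_n\rightharpoonup \bar u$ in $H^1(M)$. By Lemma \ref{lem:BP2}, $\phi_{u_n}\to\phi_{\bar u}$ in $H^2(M)$ up to a subsequence, and by Rellich--Kondrachov $u_n\to\bar u$ in $L^4(M)$ along a further subsequence. I would then split
$$G(u_n)-G(\bar u)=\int_M \phi_{u_n}(u_n^2-\bar u^2)\, d\mu_g+\int_M(\phi_{u_n}-\phi_{\bar u})\bar u^2\, d\mu_g$$
and estimate each piece by H\"older (with exponents $(2,4,4)$ and $(2,2)$ respectively), using that $|\phi_{u_n}|_2$ is uniformly bounded by Lemma \ref{lem:BP}.

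The main obstacle I anticipate is the derivation of the formula for $G'$: it rests on the biharmonic symmetry identity $\int_M \phi\, \Delta_g^2\psi\, d\mu_g=\int_M \psi\, \Delta_g^2\phi\, d\mu_g$ for $\phi,\psi\in H^2(M)$, where the absence of boundary of $M$ is essential. Everything else reduces to standard H\"older estimates together with the results already collected in Lemmas \ref{lem:BP} and \ref{lem:BP2}.
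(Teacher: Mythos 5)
Your proposal is correct, but it takes a somewhat different route from the paper's. The paper exploits the identity $G(u)=\frac{1}{4\pi}\|\phi_u\|_{H^2}^2$ (its equation \eqref{eq:punto 3}), so that $G$ is the composition of the $C^2$ map $\Phi$ with the smooth quadratic form $\|\cdot\|_{H^2}^2$; $C^1$ regularity is then automatic and the derivative is computed by the chain rule, $G'(u)[h]=\frac{1}{2\pi}\langle\phi_u,\Phi'(u)[h]\rangle_{H^2}$, followed by unfolding the $H^2$ inner product against the weak form of the equation $L\Phi'(u)[h]=8\pi uh$. You instead work directly with the defining integral, expanding $G(u+h)-G(u)$ and using the self-adjointness of $L=-\Delta_g+a^2\Delta_g^2+I$ to convert $\int_M u^2\Phi'(u)[h]\,d\mu_g$ into $2\int_M\phi_u uh\,d\mu_g$. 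The key observation in both arguments is the same, namely the symmetry of $L$ on a boundaryless manifold, so the approaches are not fundamentally different; the paper's reformulation buys you the $C^1$ claim for free, whereas yours is more elementary but requires you to verify explicitly that the remainder terms $\int 2uh(\phi_{u+h}-\phi_u)$, $\int h^2\phi_{u+h}$, and $\int u^2\cdot o(\|h\|_{H^1})$ are all $o(\|h\|_{H^1})$ --- which they are, by the H\"older estimates you indicate, but this step should not be omitted. Your treatment of weak sequential continuity via the two-term splitting is also different from the paper (which again leans on \eqref{eq:punto 3} and the strong $H^2$ convergence of $\phi_{u_n}$) but is perfectly valid. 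One small point worth flagging: the symmetry identity $\int_M\phi\,\Delta_g^2\psi=\int_M\psi\,\Delta_g^2\phi$ needs $\phi,\psi$ to be at least $H^4$ (or you should phrase it weakly, as $\int_M\Delta_g\phi\,\Delta_g\psi$, which only requires $H^2$); since both $\phi_u$ and $\Phi'(u)[h]$ solve fourth-order equations with $L^2$ right-hand sides, they are indeed in $H^4(M)$, but stating the weak form directly is cleaner and matches what the $H^2$ inner product computation in the paper actually does.
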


\begin{proof}
The first property and the continuity of $G$ are immediate consequences of H\"older inquality and of the previous Lemma.\\
By \eqref{eq:punto 3} and since $\Phi'(u)[h]$ is the unique solution of \eqref{eq:der1}, we have
\[
G'(u)[h]
=
\frac{1}{2\pi}\langle\phi_u,\Phi'(u)[h]\rangle_{H^2}
=
\frac{1}{2\pi}\int_{M} \phi_{u}[-\Delta_g \Phi'(u)[h]+a^{2}\Delta_g^{2}\Phi'(u)[h]+\Phi'(u)[h]]d\mu_{g}
=
4 \int_{M}\phi_{u}uhd\mu_{g}.
\]
The continuity of $G'$ follows from
\[
|G'(u_n)[h]-G'(u)[h]|
\leq
C[\|\phi_{u_n}-\phi_u\|_{H^2}\|u_n\|_{H^1}+\|\phi_{u_n}\|_{H^2}\|u_n-u\|_{H^1}]\|h\|_{H^1}.
\]
Finally, the last part of the statement is an easy consequence of \eqref{eq:punto 3} and of Lemma \ref{lem:BP2}.
\end{proof}

Let us conclude this section with some recall concerning the manifold $M$.
\\
Let us consider the $C^\infty$ exponential map $\exp: TM\to M$. 
Since $M$ is compact, there exists $r>0$, called {\em injectivity radius}, such that $\exp_\xi|_{B(0,r)}:B(0,r)\to B_g(\xi,r)$ is a diffeomorphism for any $\xi\in M$. Fixed $\xi\in M$, for every $y\in B(0,r)=\exp_\xi^{-1}(B_g(\xi,r))$, we have
\begin{equation}\label{asymptg}
	(g_\xi)_{ij}(y)=\delta_{ij} + \frac{1}{3} R_{ihlj} y^h y^l + O(|y|^3),
	\quad
	|g_\xi(y)|:=\operatorname{det}((g_\xi)_{ij})(y)=1-\frac{1}{3} R_{ij} y^i y^j + O(|y|^3)
\end{equation}
where $R_{ij}$ and $R_{ihlj}$ are the components of the Ricci curvature tensor and of the Riemann curvature tensor, respectively (see e.g. \cite{SY}).
%
\\
Finally, the following further definition will be useful in Section \ref{sec:low}.
\begin{definition}
	Let $M$ be a smooth compact Riemannian manifold embedded in $\mathbb{R}^N$. The {\em radius of topological invariance} for $M$ is
	\[
	r(M) := \sup\{ \rho>0:\operatorname{cat}(M_\rho)=\operatorname{cat}(M)\}
	\]
	where $M_\rho:=\{x\in \mathbb{R}^N: d(x,M)<\rho\}$.
\end{definition}

\section{Functional setting}\label{sec:funct}
Using Lemma \ref{lem:BP} and Lemma \ref{lem:BP3} we have that positive solutions of \eqref{eq:BPP} are critical points of
the $C^1$ functional
\[
J_{\varepsilon}
:=
u\in H^{1}(M)
\mapsto
\frac{1}{2}\|u\|_\varepsilon^2
+\frac{1}{4\varepsilon^3}\int_M \phi_{u}u^{2} d\mu_g
-\frac{1}{p}|u^{+}|_{p,\varepsilon}^{p}\in\mathbb{R}.
\]
Moreover, let us consider the Nehari manifold
\[
\mathcal{N}_{\varepsilon}
:=\{ u\in H^{1}(M)\setminus\left\{ 0\right\}  : N_\varepsilon(u)=0\},
\]
where
\[
N_\varepsilon(u)
:=J_{\varepsilon}'(u)[u]
=\|u\|_{\varepsilon}^{2}+\frac{1}{\varepsilon^{3}}\int_M\phi_{u}u^{2}d\mu_{g}-|u^{+}|_{p,\varepsilon}^{p}.
\]
We have
\begin{lemma}\label{lemNehari}
If $p> 4$, then:
\begin{enumerate}[label=(\roman{*}), ref=\roman{*}]
	\item \label{tu}for every $u\in H^1(M)$ with $u^+\not\equiv 0$, there exists $t_u>0$ such that $t_u u\in \mathcal{N}_{\varepsilon}$;
	\item \label{tucont} the map $u\in H^1(M)\mapsto t_u\in(0,+\infty)$ is continuous;
\item \label{disc0}there exists $C>0$ such that, for every $\varepsilon>0$ and $u\in\mathcal{N}_{\varepsilon}$, $|u^{+}|_{p,\varepsilon}\geq C$;
\item \label{N'n0}for every $u\in\mathcal{N}_{\varepsilon}$, $N'_\varepsilon(u)\neq 0$;
\item \label{mepsilon} there exists $C>0$ such that,  for every $\varepsilon>0$, $m_{\varepsilon}:=\displaystyle{\inf_{\mathcal{N}_{\varepsilon}}J_{\varepsilon}}\geq C>0$.
\end{enumerate}
\end{lemma}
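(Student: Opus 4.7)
The plan is to use the Nehari-manifold machinery in its standard form, exploiting two features special to this problem: the homogeneity $\phi_{tu}=t^2\phi_u$ from Lemma \ref{lem:BP2}, which makes the $\phi$-term into a quartic, and the uniform-in-$\varepsilon$ embedding \eqref{imb}.

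For \eqref{tu} and \eqref{tucont}, I would look at the fiber $t\mapsto N_\varepsilon(tu)$ for $t>0$. Thanks to $\phi_{tu}=t^2\phi_u$ one finds
\[
\frac{N_\varepsilon(tu)}{t^2}=\|u\|_\varepsilon^2+\frac{t^2}{\varepsilon^3}G(u)-t^{p-2}|u^+|_{p,\varepsilon}^p.
\]
Since $u^+\not\equiv 0$ implies $u\not\equiv 0$ and hence $G(u)>0$ by \eqref{eq:punto 3}, and since $p>4$ makes the positive term quadratic and the negative term super-quadratic in $t$, elementary calculus shows the right-hand side starts at $\|u\|_\varepsilon^2>0$, has a unique interior critical point which is a strict maximum, and tends to $-\infty$. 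Hence it has a unique zero $t_u>0$. For \eqref{tucont}, from Lemma \ref{lem:BP3} the maps $u\mapsto\|u\|_\varepsilon$, $u\mapsto|u^+|_{p,\varepsilon}$ and $u\mapsto G(u)$ are continuous on $H^1(M)$, and at $t=t_u$ the derivative $\frac{d}{dt}[N_\varepsilon(tu)/t^2]$ is strictly negative by the shape just described. Uniqueness together with the implicit function theorem (applied to $t^{-2}N_\varepsilon(tu)=0$) yields the continuity.

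For \eqref{disc0}, on $\mathcal{N}_\varepsilon$ the relation $\|u\|_\varepsilon^2+\varepsilon^{-3}G(u)=|u^+|_{p,\varepsilon}^p$ gives $\|u\|_\varepsilon^2\leq|u^+|_{p,\varepsilon}^p$. Combining this with the $\varepsilon$-uniform embedding $|u^+|_{p,\varepsilon}\leq C\|u\|_\varepsilon$ from \eqref{imb} yields $|u^+|_{p,\varepsilon}^p\leq C^p\|u\|_\varepsilon^p\leq C^p|u^+|_{p,\varepsilon}^{p^2/2}$, from which a uniform positive lower bound on $|u^+|_{p,\varepsilon}$ (and on $\|u\|_\varepsilon$) follows since $p>2$.

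For \eqref{N'n0} and \eqref{mepsilon}, the crucial computation is the value of $N_\varepsilon'(u)[u]$ and of $J_\varepsilon(u)$ on the constraint. Using $G'(u)[u]=4G(u)$ from Lemma \ref{lem:BP3} and eliminating $|u^+|_{p,\varepsilon}^p$ via $u\in\mathcal{N}_\varepsilon$, one finds
\[
N_\varepsilon'(u)[u]=(2-p)\|u\|_\varepsilon^2+(4-p)\frac{1}{\varepsilon^3}G(u),\qquad J_\varepsilon(u)=\Bigl(\tfrac12-\tfrac1p\Bigr)\|u\|_\varepsilon^2+\Bigl(\tfrac14-\tfrac1p\Bigr)\frac{1}{\varepsilon^3}G(u).
\]
Since $p>4$, every coefficient has the right sign: in the first expression both terms are non-positive with $(2-p)\|u\|_\varepsilon^2\leq -C$ by \eqref{disc0}, forcing $N_\varepsilon'(u)\neq 0$; in the second, both terms are non-negative, and discarding the $G$-contribution leaves $J_\varepsilon(u)\geq(\tfrac12-\tfrac1p)\|u\|_\varepsilon^2$, which is bounded below by a positive constant independent of $\varepsilon$, again by \eqref{disc0}. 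The only delicate point in the whole argument is ensuring that the lower bounds in \eqref{disc0} and \eqref{mepsilon} are genuinely uniform in $\varepsilon$, which is why one must work with the $\varepsilon$-adapted norms and the $\varepsilon$-independent embedding constant in \eqref{imb} rather than with the ordinary $H^1$-norm.
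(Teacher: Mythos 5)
Your proof is correct and follows essentially the same route as the paper: analysis of the fiber map (exploiting $\phi_{tu}=t^2\phi_u$ and $p>4$) for (i)--(ii), the $\varepsilon$-uniform embedding \eqref{imb} on the Nehari constraint for (iii), and elimination of $|u^+|_{p,\varepsilon}^p$ via the constraint (equivalently, subtracting $4N_\varepsilon(u)$) for (iv)--(v). The only methodological deviation is in (ii), where you invoke uniqueness of the zero plus the implicit function theorem for $t^{-2}N_\varepsilon(tu)=0$ (using that $\partial_t$ is strictly negative at $t_u$), whereas the paper argues sequentially: boundedness of $\{t_{u_n}\}$ and subsequential convergence to a zero of the limiting fiber, then uniqueness. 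Both are standard; the IFT route actually gives $C^1$-dependence of $t_u$ for free, while the paper's argument is more elementary and avoids verifying joint $C^1$ regularity of the fiber map.
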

\begin{proof}
Let $u\in H^{1}(M)$ with $u^+\not\equiv 0$. Property (\ref{tu}) is an easy consequence of the fact that, using Lemma \ref{lem:BP2}, the function
	\begin{equation}\label{eq:Nt}
	\varphi_u(t)
	:=
	N_\varepsilon(tu)
	=
	t^2 \|u\|_{\varepsilon}^{2}
	+ \frac{t^4}{\varepsilon^{3}}\int_M\phi_{u}u^{2} d\mu_g
	-t^p|u^{+}|_{p,\varepsilon}^{p},
	\quad t>0,
\end{equation}
admits a unique zero $t_u$, due to the assumption $p>4$.\\
To get claim \eqref{tucont} we observe that, if $u_n\rightarrow u$ in $H^1(M)$, then $|u^+_n|_p\rightarrow |u^+|_p$ and, by Lemma \ref{lem:BP3},
$$\int_M\phi_{u_n}u_n^{2} d\mu_g \rightarrow  \int_M\phi_{u}u^{2} d\mu_g.$$
Then $\{t_{u_n}\}$ is bounded and so  it converges, up to subsequence, to some $\bar{t}\in\mathbb{R}$. At this point, since we are dealing with the unique solutions of $\varphi_{u_n}(t)=0$, it is easy to see that  $\bar{t}=t_u$.\\
Moreover, if $u\in\mathcal{N}_{\varepsilon}$, by Lemma \ref{lem:BP} and \eqref{imb}
\[
|u^{+}|_{p,\varepsilon}^{p}
=
\|u\|_{\varepsilon}^{2}+\frac{1}{\varepsilon^{3}}\int_M \phi_{u}u^{2} d\mu_g
\geq
\|u\|_{\varepsilon}^{2}
\geq
C |u|_{p,\varepsilon}^{2}
\geq
C |u^{+}|_{p,\varepsilon}^{2}
\]
and so we get (\ref{disc0}).\\
To prove (\ref{N'n0}) we observe that, if $u\in\mathcal{N}_{\varepsilon}$, then, by Lemma \ref{lem:BP3},
\[
N_\varepsilon'(u)[u]
=N_\varepsilon'(u)[u]-4N_\varepsilon(u)
=-2\|u\|_{\varepsilon}^{2}-(p-4)|u^{+}|_{p,\varepsilon}^{p}
\leq -C
<0
\]
being $p>4$.\\
Finally, observe that, if $u \in \mathcal{N}_{\varepsilon}$, since $p>4$, by \eqref{imb}, (\ref{bLem:BP}) of Lemma \ref{lem:BP}, and (\ref{disc0}), we have
\[
J_\varepsilon(u)
=\Big(\frac{1}{2}-\frac{1}{p}\Big) \|u\|_\varepsilon^2
+\Big(\frac{1}{4}-\frac{1}{p}\Big) \frac{1}{\varepsilon^{3}}\int_M\phi_{u}u^{2}d\mu_g
\geq
C>0.
\]
\end{proof}

\begin{lemma}
\label{lem:PSnehari}
Let $\varepsilon>0$ be fixed. Then, for every $c>0$, the functional $J_{\varepsilon}$ satisfies the Palais Smale condition at level $c$. Moreover, if $\left\{ u_{n}\right\}$ is a Palais Smale sequence for $J_{\varepsilon}|_{\mathcal{N}_{\varepsilon}}$, then it is a Palais Smale sequence also for $J_{\varepsilon}$.
\end{lemma}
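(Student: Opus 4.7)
The plan is to establish both statements via the standard variational machinery, relying crucially on the compactness properties of $\Phi$ from Lemma \ref{lem:BP2} and the convex structure of the Nehari manifold from Lemma \ref{lemNehari}.

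\textbf{Step 1: boundedness of PS sequences for $J_\varepsilon$.} Let $\{u_n\}$ be a PS sequence at level $c$. Since $p > 4$, both constants $\frac{1}{2} - \frac{1}{p}$ and $\frac{1}{4} - \frac{1}{p}$ are strictly positive, so the algebraic combination
\[
J_\varepsilon(u_n) - \frac{1}{p} J_\varepsilon'(u_n)[u_n]
=
\Big(\frac{1}{2} - \frac{1}{p}\Big) \|u_n\|_\varepsilon^2
+
\Big(\frac{1}{4} - \frac{1}{p}\Big) \frac{1}{\varepsilon^3} \int_M \phi_{u_n} u_n^2 \, d\mu_g
\]
is coercive in $\|\cdot\|_\varepsilon$ (the second term is nonnegative by \eqref{eq:punto 3}). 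Since $\varepsilon$ is fixed, $\|\cdot\|_\varepsilon$ and $\|\cdot\|_{H^1}$ are equivalent, and bounding the right-hand side by $c + o(1) + o(1)\|u_n\|_{H^1}$ yields boundedness of $\{u_n\}$ in $H^1(M)$.

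\textbf{Step 2: strong convergence.} Up to a subsequence, $u_n \rightharpoonup \bar u$ weakly in $H^1(M)$, strongly in $L^q(M)$ for $1 \le q < 6$, and, by Lemma \ref{lem:BP2}, $\phi_{u_n} \to \phi_{\bar u}$ in $H^2(M)$. Testing $J_\varepsilon'(u_n) \to 0$ against $u_n - \bar u$ and expanding,
\[
\frac{1}{\varepsilon}\int_M \nabla_g u_n \cdot \nabla_g(u_n-\bar u)\, d\mu_g
+ \frac{1}{\varepsilon^3}\int_M u_n (u_n-\bar u)\, d\mu_g
= -\frac{1}{\varepsilon^3}\int_M \phi_{u_n} u_n (u_n - \bar u)\, d\mu_g
+ \frac{1}{\varepsilon^3}\int_M (u_n^+)^{p-1}(u_n-\bar u)\, d\mu_g + o(1).
\]
The two right-hand terms vanish: the first by H\"older using $\phi_{u_n} \to \phi_{\bar u}$ in $H^2 \hookrightarrow L^6$ together with strong $L^2$ convergence of $u_n - \bar u$, the second by the compact Sobolev embedding $H^1 \hookrightarrow L^p$ for $p < 6$. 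Since weak convergence makes $\int \nabla_g \bar u \cdot \nabla_g(u_n-\bar u)\, d\mu_g \to 0$ and similarly for the $L^2$ term, subtracting gives $\|u_n - \bar u\|_\varepsilon^2 \to 0$, hence strong $H^1$ convergence.

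\textbf{Step 3: PS sequences on the Nehari manifold.} Let $\{u_n\} \subset \mathcal{N}_\varepsilon$ satisfy $J_\varepsilon(u_n) \to c$ and $\|(J_\varepsilon|_{\mathcal{N}_\varepsilon})'(u_n)\| \to 0$. The same coercivity identity as in Step 1, using that $u_n \in \mathcal{N}_\varepsilon$ means $N_\varepsilon(u_n) = 0$, gives $\|u_n\|_\varepsilon$ bounded. By the Lagrange multiplier rule there exist $\lambda_n \in \mathbb{R}$ with
\[
J_\varepsilon'(u_n) = \lambda_n N_\varepsilon'(u_n) + o(1) \quad \text{in } (H^1(M))'.
\]
Testing against $u_n$ and using $J_\varepsilon'(u_n)[u_n] = N_\varepsilon(u_n) = 0$ yields $\lambda_n N_\varepsilon'(u_n)[u_n] = o(1)$. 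From the computation proving Lemma \ref{lemNehari}(\ref{N'n0}) together with (\ref{disc0}),
\[
N_\varepsilon'(u_n)[u_n]
= -2\|u_n\|_\varepsilon^2 - (p-4)|u_n^+|_{p,\varepsilon}^p
\le -(p-4)\, C < 0,
\]
so $\lambda_n \to 0$. Since $\{u_n\}$ is bounded in $H^1(M)$, so is $\{N_\varepsilon'(u_n)\}$ in $(H^1(M))'$, and therefore $J_\varepsilon'(u_n) \to 0$ in $(H^1(M))'$, proving the second assertion.

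\textbf{Main obstacle.} The only nonroutine point is the passage to the limit in the nonlocal term $\int \phi_{u_n} u_n (u_n-\bar u)\, d\mu_g$ in Step 2; this is exactly what Lemma \ref{lem:BP2} is designed to handle by providing strong $H^2$ convergence of $\phi_{u_n}$ along a subsequence. Everything else is the classical subcritical PS scheme adapted to a Nehari constraint.
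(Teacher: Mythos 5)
Your proof is correct, and the overall skeleton (bound via a Nehari-type algebraic combination, extract a weak limit, upgrade to strong convergence, Lagrange multiplier argument on $\mathcal{N}_\varepsilon$) is the same as the paper's. The genuine difference is in how strong convergence is obtained. You test $J_\varepsilon'(u_n)$ against $u_n-\bar{u}$ and separate the $\|\cdot\|_\varepsilon$-part from the lower-order nonlocal and power terms, letting the latter two die by H\"older and compact Sobolev embeddings; this is a direct, self-contained computation. The paper instead introduces the Riesz isomorphism $R_\varepsilon=-\tfrac{1}{\varepsilon}\Delta_g+\tfrac{1}{\varepsilon^3}\operatorname{Id}$ on $H^1(M)$ and writes $u_n$ as $R_\varepsilon^{-1}$ applied to $\phi_{u_n}u_n\in L^{3/2}(M)$ and $|u_n^+|^{p-2}u_n^+\in L^{p'}(M)$ plus $o_n(1)$, then uses compactness of $L^{\tau'}(M)\hookrightarrow H^{-1}(M)$. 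Both are standard for subcritical problems; your version avoids invoking the Riesz operator explicitly, while the paper's is perhaps shorter to state once the right spaces are identified. Two minor remarks: (i) in Step 1 you use the combination $J_\varepsilon-\tfrac{1}{p}J_\varepsilon'[\cdot]$ while the paper uses $J_\varepsilon-\tfrac{1}{4}J_\varepsilon'[\cdot]$; any $\alpha\in[\tfrac1p,\tfrac14]$ works because $p>4$, so this is cosmetic. (ii) In Step 2 you do not actually need the strong $H^2$ convergence $\phi_{u_n}\to\phi_{\bar u}$ from Lemma \ref{lem:BP2}; the uniform bound $\|\phi_{u_n}\|_{H^2}\le C|u_n|_2^2$ from Lemma \ref{lem:BP}(\ref{aLem:BP}), combined with $u_n-\bar u\to 0$ in $L^2$ and boundedness of $u_n$ in $L^3$, already kills the mixed term by H\"older, so the compactness of $\Phi$ is not the crux of this lemma.
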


\begin{proof}
Let $\varepsilon>0$ be fixed and $\left\{ u_{n}\right\}\subset H^1(M)$ be a Palais Smale sequence for $J_{\varepsilon}$ at level $c$, with $c>0$. Then
\[
c+o_n(1)+o_n(1)\|u_n\|_\varepsilon
\geq
J_{\varepsilon}(u_{n})-\frac{1}{4}J_{\varepsilon}'(u_{n})[u_{n}]
=
\frac{1}{4}\|u_{n}\|_{\varepsilon}^{2}+\left(\frac{1}{4}-\frac{1}{p}\right)|u_{n}^{+}|_{p,\varepsilon}^{p}
\geq
\frac{1}{4}\|u_{n}\|_{\varepsilon}^{2}
\]
and so $\{u_n\}$ is bounded in $H^1(M)$.
Thus, up to a subsequence, it converges weakly to a function $\bar{u}\in H^1(M)$ and $u_{n}\to\bar{u}$ in $L^{\tau}(M)$ for $1\le \tau<6$.\\
The remaining part is standard, since, if
$$R_\varepsilon := -\frac{1}{\varepsilon}\Delta_g +\frac{1}{\varepsilon^3} \operatorname{Id} $$
is the Riesz isomorphism on $H^1(M)$, we have
\[
u_n=-\frac{q^2}{\varepsilon^3}R_\varepsilon^{-1}(\phi_{u_n}u_n)
+\frac{1}{\varepsilon^3}R_\varepsilon^{-1}(|u_n^+|^{p-2}u_n^+)+o_n(1)
\]
and so we can conclude observing that $\{\phi_{u_n}u_n\}$ is bounded in $L^{3/2}(M)$, being
\[
|\phi_{u_n}u_n|_{3/2} \leq |\phi_{u_n}|_6 |u_n|_2,
\]
$\{|u_n^+|^{p-2}u_n^+\}$ is bounded in $L^{p'}(M)$, being $p'$ the conjugate exponent of $p$, and using the compact embedding (by duality) of $L^{\tau'}(M)$ in $H^{-1}(M)$.
Thus, up to a subsequence $u_n\to\bar{u}$ in $H^1(M)$.\\
Let now $\left\{ u_{n}\right\}\subset\mathcal{N}_{\varepsilon}$ be a Palais Smale sequence of $J_{\varepsilon}|_{\mathcal{N}_{\varepsilon}}$, namely such that $J_{\varepsilon}(u_{n})\rightarrow c$ and $J_{\varepsilon}'(u_{n})-\lambda_n N_\varepsilon'(u_n)\rightarrow0$, with $\{\lambda_n\}\subset \mathbb{R}$.\\
To prove that $J_{\varepsilon}'(u_{n})\rightarrow0$, first we observe that, arguing as before, we have
that $\{u_n\}$ is bounded in $H^1(M)$.\\
Moreover, arguing as in the proof of (\ref{N'n0}) of Lemma \ref{lemNehari} and using (\ref{disc0}) of Lemma \ref{lemNehari} we have
\[
N_\varepsilon'(u_n)[u_n]
=-2\|u_n\|_{\varepsilon}^{2}-(p-4)|u_n^{+}|_{p,\varepsilon}^{p}
<-C<0.
\]
Thus, up to a subsequence, $N_\varepsilon'(u_n)[u_n]\to \ell\in[-\infty,0)$.\\
Then, since $\left\{ u_{n}\right\}\subset\mathcal{N}_{\varepsilon}$, we have that $\lambda_n N_\varepsilon'(u_n)[u_n]\to0$ and so $\lambda_n \to 0$.
Hence we can conclude proving that $\{N_\varepsilon'(u_n)\}$ is bounded in $H^{-1}(M)$ and this is an immediate consequence of the H\"older inequality and of the boundedness of $\{u_n\}$ in $H^1(M)$, since, for every $\varphi\in H^1(M)$,
\begin{align*}
|N_\varepsilon'(u_n)[\varphi]|
&\leq
2 |\langle u_n,\varphi \rangle_\varepsilon|
+\frac{4}{\varepsilon^3}\int_{M} |\phi_{u_n} u_n \varphi| d\mu_g
+\frac{p}{\varepsilon^3} \int_{M} |u_n^+|^{p-1}|\varphi| d\mu_g\\
&\leq
C[\|u_n\|_\varepsilon + \|u_n\|_\varepsilon^3+ \|u_n\|_\varepsilon^{p-1}  ]
\|\varphi\|_\varepsilon\\
&\leq C \|\varphi\|_\varepsilon.
\end{align*}
\end{proof}

\section{Low energy solutions}\label{sec:low}

We start now the proof of Theorem \ref{th:main}. In particular, this section is devoted to show the existence of multiple low energy solutions by the {\em photography method}. 
The core of the proof relies in three claims: Lemma \ref{lem:Psieps} where we show that, for small $\varepsilon>0$, the function $\Psi_\varepsilon$, defined in \eqref{Psieps}, maps points of $M$ in low energy functions in $\mathcal{N}_\varepsilon$, Lemma \ref{lem:gamma} that prevents vanishing for low energy functions, Propositon \ref{prop:baricentro} which states that  low energy functions in $\mathcal{N}_\varepsilon$ are indeed concentrated around a point on the manifold. In light of these results, Propositon \ref{centrodimassa}  establishes the link between the points on $M$ and the set of low energy functions, which allows us to apply classical result Theorem \ref{prelim} and to get the first claim of Theorem \ref{th:main}.
	
Firstly, we give a good model for low energy  solutions of \eqref{eq:BPP}.

Let $U\in H^{1}(\mathbb{R}^{3})$ be the unique positive solution of 
\begin{equation}
-\Delta u+ u=|u|^{p-2}u\text{ in }\mathbb{R}^{3}.\label{eq:problim}
\end{equation}
It is well known that such a function is radially symmetric, nondegenerate, and decays exponentially at infinity (see \cite{GNN,K}).
For $\xi\in M$ and $\varepsilon>0$,  let us take
\begin{equation}\label{Wxieps}
W_{\xi,\varepsilon}:=U_{\varepsilon}(\exp_{\xi}^{-1}\cdot)\chi_r(|\exp_{\xi}^{-1}\cdot|)
\end{equation}
where $\chi$ is a cut off such that
\begin{equation*}
\chi_r(\rho)
:=
\begin{cases}
1&\text{if } \rho\in[0,r/2),\\
0&\text{if } \rho\in (r,+\infty),
\end{cases}
\quad
|\chi'|\leq 2/r,
\quad
|\chi''|\leq 2/r^2,
\end{equation*}
$r>0$ being the injectivity radius defined in Section \ref{sec:prelim}, and $U_\varepsilon=U(\cdot/\varepsilon)$.

Let us prove the following preliminary result.
\begin{lemma}\label{lem:limiti}
We have:
\begin{enumerate}[label=(\roman{*}), ref=\roman{*}]
\item \label{1lem:limiti}${\displaystyle \lim_{\varepsilon\rightarrow0}\|W_{\xi,\varepsilon}\|_{\varepsilon}^{2}
=\|U\|_{H^1}^2}$;
\item \label{2lem:limiti}${\displaystyle \lim_{\varepsilon\rightarrow0}\left|W_{\xi,\varepsilon}\right|_{q,\varepsilon}^{q}=|U|_{q}^{q}}$ for $q\in [1,6]$;
\item \label{2blem:limiti}${\displaystyle \lim_{\varepsilon\rightarrow0}
	\frac{1}{\varepsilon^{3}}\int_{M}W_{\xi,\varepsilon}^{2}\phi_{W_{\xi,\varepsilon}} = 0}$;
\item \label{3lem:limiti}${\displaystyle \lim_{\varepsilon\rightarrow0}t_{W_{\xi,\varepsilon}}=1}$.
\end{enumerate}
\end{lemma}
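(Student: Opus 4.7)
My plan is to pull every integral back to $\mathbb{R}^3$ via the exponential chart at $\xi$ combined with the rescaling $y=\varepsilon z$, and then invoke dominated convergence using the exponential decay of $U$.

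\smallskip

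\noindent\textbf{Parts (i) and (ii).} Since $W_{\xi,\varepsilon}$ is supported in $B_g(\xi,r)$, I would write, for any integrand $F$,
\[
\int_M F(W_{\xi,\varepsilon},\nabla_g W_{\xi,\varepsilon})\, d\mu_g
= \int_{B(0,r)} F(U_\varepsilon(y)\chi_r(|y|),\,\nabla(U_\varepsilon\chi_r)(y))\,\sqrt{|g_\xi(y)|}\,dy,
\]
using the expansions \eqref{asymptg} for the metric tensor. For (ii), after substituting $y=\varepsilon z$ (which produces the Jacobian $\varepsilon^3$ cancelling the $1/\varepsilon^3$ in the norm), the domain becomes $B(0,r/\varepsilon)$, the factor $\chi_r(\varepsilon|z|)^q$ converges pointwise to $1$, and $\sqrt{|g_\xi(\varepsilon z)|}\to 1$; dominated convergence with the majorant $C\,U(z)^q$ (exponentially decaying, hence integrable) gives $|U|_q^q$. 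For (i), the gradient produces the dominant term $\varepsilon^{-2}|\nabla U(y/\varepsilon)|^2\chi_r^2$, a cross term, and a $\chi_r'$-term; after $y=\varepsilon z$ and multiplication by $\varepsilon^{-1}$, the dominant term converges to $\int_{\mathbb{R}^3}|\nabla U|^2\,dz$ because $g^{ij}(\varepsilon z)\to\delta^{ij}$; the $L^2$ part yields $\int U^2$ exactly as in (ii) with $q=2$. The cross term and the $\chi_r'$-term are supported in the annulus $r/(2\varepsilon)\le|z|\le r/\varepsilon$, where $U$ (and $\nabla U$) is exponentially small, so they vanish in the limit. Summing gives $\|U\|_{H^1}^2$.

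\smallskip

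\noindent\textbf{Part (iii).} The key observation is that on a compact $3$-manifold one has $H^2(M)\hookrightarrow L^\infty(M)$, hence by Lemma \ref{lem:BP}(\ref{aLem:BP}),
\[
\int_M W_{\xi,\varepsilon}^2\,\phi_{W_{\xi,\varepsilon}}\,d\mu_g
\le |\phi_{W_{\xi,\varepsilon}}|_\infty\,|W_{\xi,\varepsilon}|_2^2
\le C\,\|\phi_{W_{\xi,\varepsilon}}\|_{H^2}\,|W_{\xi,\varepsilon}|_2^2
\le C\,|W_{\xi,\varepsilon}|_2^4 .
\]
From part (ii) with $q=2$ we know $|W_{\xi,\varepsilon}|_2^2=\varepsilon^3(|U|_2^2+o(1))$, so dividing by $\varepsilon^3$ leaves a factor $\varepsilon^3\to 0$.

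\smallskip

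\noindent\textbf{Part (iv).} Since $W_{\xi,\varepsilon}\ge 0$, the defining equation $\varphi_{W_{\xi,\varepsilon}}(t_{W_{\xi,\varepsilon}})=0$ in \eqref{eq:Nt}, divided by $t_{W_{\xi,\varepsilon}}^2$, reads
\[
\|W_{\xi,\varepsilon}\|_\varepsilon^2
+ t_{W_{\xi,\varepsilon}}^{2}\,\frac{1}{\varepsilon^3}\int_M \phi_{W_{\xi,\varepsilon}} W_{\xi,\varepsilon}^2\,d\mu_g
= t_{W_{\xi,\varepsilon}}^{\,p-2}\,|W_{\xi,\varepsilon}|_{p,\varepsilon}^p .
\]
I would first check that $t_{W_{\xi,\varepsilon}}$ is bounded away from $0$ and $\infty$: the lower bound follows from part (ii) and Lemma \ref{lemNehari}(\ref{disc0}) (taking $u=t_{W_{\xi,\varepsilon}}W_{\xi,\varepsilon}$), while an upper bound comes from dropping the nonnegative middle term and using (i)--(ii) to get $t_{W_{\xi,\varepsilon}}^{p-2}\le \|W_{\xi,\varepsilon}\|_\varepsilon^2/|W_{\xi,\varepsilon}|_{p,\varepsilon}^p = O(1)$. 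Then any accumulation point $\bar t>0$ of $t_{W_{\xi,\varepsilon}}$ must, by (i)--(iii), satisfy $\|U\|_{H^1}^2=\bar t^{\,p-2}|U|_p^p$. Since $U$ solves \eqref{eq:problim}, testing with $U$ gives $\|U\|_{H^1}^2=|U|_p^p$, so $\bar t=1$, and uniqueness of the limit point yields $t_{W_{\xi,\varepsilon}}\to 1$.

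The only mildly delicate point is keeping track of the remainder terms coming from the cut-off $\chi_r$ and from $g^{ij}=\delta^{ij}+O(|y|^2)$ in part (i); everything else is a routine application of dominated convergence with the exponentially decaying majorant $U$.
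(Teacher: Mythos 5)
Your parts (i)--(iii) are correct and follow essentially the same route as the paper: pull back via $\exp_\xi$, rescale $y=\varepsilon z$, use the metric expansion \eqref{asymptg} together with dominated convergence, and for (iii) bound $\int_M W_{\xi,\varepsilon}^2\phi_{W_{\xi,\varepsilon}}\,d\mu_g \le C|W_{\xi,\varepsilon}|_2^4$. (You pass through $H^2\hookrightarrow L^\infty$ plus H\"older, while the paper uses the identity $G(u)=\tfrac1{4\pi}\|\phi_u\|_{H^2}^2$ from \eqref{eq:punto 3} and then Lemma~\ref{lem:BP}(\ref{aLem:BP}); both give the same bound.)

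In part (iv), however, your argument for the \emph{upper} bound on $t_{W_{\xi,\varepsilon}}$ is wrong: from
\[
\|W_{\xi,\varepsilon}\|_\varepsilon^2
+ t^{2}\,\frac{1}{\varepsilon^3}\int_M \phi_{W_{\xi,\varepsilon}} W_{\xi,\varepsilon}^2\,d\mu_g
= t^{\,p-2}\,|W_{\xi,\varepsilon}|_{p,\varepsilon}^p ,
\]
dropping the nonnegative middle term gives $\|W_{\xi,\varepsilon}\|_\varepsilon^2\le t^{p-2}|W_{\xi,\varepsilon}|_{p,\varepsilon}^p$, i.e.\ $t^{p-2}\ge \|W_{\xi,\varepsilon}\|_\varepsilon^2/|W_{\xi,\varepsilon}|_{p,\varepsilon}^p$ --- a \emph{lower} bound, not the $\le$ you wrote. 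To get the upper bound you genuinely need $p>4$: rewrite the identity as
\[
t^2\Big(t^{\,p-4}|W_{\xi,\varepsilon}|_{p,\varepsilon}^p - \tfrac1{\varepsilon^3}\!\int_M \phi_{W_{\xi,\varepsilon}}W_{\xi,\varepsilon}^2\,d\mu_g\Big)=\|W_{\xi,\varepsilon}\|_\varepsilon^2.
\]
The bracketed quantity stays bounded below by a positive constant once $t$ is bounded away from $0$ (since $|W_{\xi,\varepsilon}|_{p,\varepsilon}^p\to|U|_p^p>0$ and the $\phi$-term tends to $0$ by (iii)), while the right-hand side converges to the finite number $\|U\|_{H^1}^2$; hence $t$ cannot go to $+\infty$. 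This is the argument the paper actually uses. Once boundedness is established, your accumulation-point step is fine.
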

\begin{proof}
Let $\tilde{W}_{\xi,\varepsilon}:=W_{\xi,\varepsilon}(\exp_\xi \cdot)$ and $ |g_\xi|:= \det (g_{ij})$, where $(g_{ij})$  is the inverse matrix of 
$(g_\xi^{ij})$.
Then $\tilde{W}_{\xi,\varepsilon}=U(\cdot/\varepsilon)\chi(|\cdot|)$ and we have
\begin{align*}
\frac{1}{\varepsilon}\int_M |\nabla_g W_{\xi,\varepsilon}|^2d\mu_g
&=
\frac{1}{\varepsilon}
\int_{B(0,r)}
g_\xi^{ij}(y)
\partial_i \tilde{W}_{\xi,\varepsilon}(y)
\partial_j \tilde{W}_{\xi,\varepsilon}(y)
|g_\xi(y)|^{1/2} dy\\
&=
\int_{B(0,r/\varepsilon)}
\Big(
g_\xi^{ij}(\varepsilon z) 
\partial_i U(z) 
\partial_j U (z)
\Big)
\chi^2(|\varepsilon z|) |g_\xi(\varepsilon z)|^{1/2} dz\\
&\quad
+\varepsilon \int_{B(0,r/\varepsilon)}
\Big(
g_\xi^{ij}(\varepsilon z) 
\partial_i U(z) 
\frac{y_j}{|z|}
\Big)
U(z) \chi(|\varepsilon z|)\chi'(|\varepsilon z|)
|g_\xi(\varepsilon z)|^{1/2} dz\\
&\quad
+\varepsilon \int_{B(0,r/\varepsilon)}
\Big(
g_\xi^{ij}(\varepsilon z) 
\frac{y_i}{|z|}
\partial U_j(z) 
\Big)U(z)\chi(|\varepsilon z|)\chi'(|\varepsilon z|)
|g_\xi(\varepsilon z)|^{1/2} dz\\
&\quad
+\varepsilon^2\int_{B(0,r/\varepsilon)}
\Big(
g_\xi^{ij}(\varepsilon z) 
z_i z_j\Big)
U^2(z) \Big(\frac{\chi'(|\varepsilon z|)}{|z|}\Big)^2
|g_\xi(\varepsilon z)|^{1/2} dz
\end{align*}
and
\[
	\int_M | W_{\xi,\varepsilon}|^2 d\mu_g
	=
	\int_{B(0,r)}
	|U_\varepsilon(y)\chi(|x|)|^2 |g_\xi(y)|^{1/2} dy
	=
	\varepsilon^3 \int_{B(0,r/\varepsilon)}
	|U(z)\chi(|\varepsilon z|)|^2 |g_\xi(\varepsilon z)|^{1/2} dz.
\]
Applying the Dominated Convergence Theorem and using \eqref{asymptg} we get
\[
\frac{1}{\varepsilon}\int_M |\nabla_g W_{\xi,\varepsilon}|^2d\mu_g
+\frac{1}{\varepsilon^3} \int_M | W_{\xi,\varepsilon}|^2 d\mu_g
\to
|\nabla U|_{2}^{2}+|U|_{2}^{2}.
\]
Analogously
\[
\frac{1}{\varepsilon^3}\int_M | W_{\xi,\varepsilon}|^p d\mu_g
=
\frac{1}{\varepsilon^3}\int_{B(0,r)}
|U_\varepsilon(y)\chi(|y|)|^p |g_\xi(y)|^{1/2} dy
=
\int_{B(0,r/\varepsilon)}
|U(z)\chi(|\varepsilon z|)|^p |g_\xi(\varepsilon z)|^{1/2} dz
\to
|U|_{p}^{p}.
\]
To prove (\ref{2blem:limiti}), observe that, by (\ref{tu}) in Lemma \ref{lemNehari}
and Lemma \ref{lem:BP2}, $t_{W_{\xi,\varepsilon}}$ satisfies
\begin{equation}\label{eq:HprimoW}
	t_{W_{\xi,\varepsilon}}^{p-2}\left|W_{\xi,\varepsilon}\right|_{p,\varepsilon}^{p}
	=\|W_{\xi,\varepsilon}\|_{\varepsilon}^{2}
	+t_{W_{\xi,\varepsilon}}^2\frac{1}{\varepsilon^{3}}\int_{M}W_{\xi,\varepsilon}^{2}\phi_{W_{\xi,\varepsilon}} d\mu_{g}.
\end{equation}
By (\ref{aLem:BP}) in Lemma \ref{lem:BP} we have
\begin{equation*}
	\int_{M}W_{\xi,\varepsilon}^{2}\phi_{W_{\xi,\varepsilon}} d\mu_{g}
	=\| \phi_{W_{\xi,\varepsilon}}\|_{H^2}^2
	\leq
	C\left|W_{\xi,\varepsilon}\right|_{2}^{4}
\end{equation*}
and so,
\begin{equation}\label{eq:Geps}
	0
	\leq
	\frac{1}{\varepsilon^{3}}\int_{M}W_{\xi,\varepsilon}^{2}\phi_{tW_{\xi,\varepsilon}} d\mu_{g}
	\leq C \varepsilon^3 \left|W_{\xi,\varepsilon}\right|_{2,\varepsilon}^{4}
	\to 0.
\end{equation}
Finally, writing \eqref{eq:HprimoW} as
\[
t_{W_{\xi,\varepsilon}}^{2}
\Big(
t_{W_{\xi,\varepsilon}}^{p-4}\left|W_{\xi,\varepsilon}\right|_{p,\varepsilon}^{p}
-\frac{1}{\varepsilon^{3}}\int_{M}W_{\xi,\varepsilon}^{2}\phi_{W_{\xi,\varepsilon}} d\mu_{g}
\Big)
=\|W_{\xi,\varepsilon}\|_{\varepsilon}^{2}
\]
and using (\ref{1lem:limiti}), (\ref{2lem:limiti}), and \eqref{eq:Geps}, we get that $t_{W_{\xi,\varepsilon}}$ is bounded for $\varepsilon$ small enough.
Hence, again by \eqref{eq:HprimoW}, (\ref{1lem:limiti}), and (\ref{2lem:limiti}),
\[
t_{W_{\xi,\varepsilon}}^{p-2}
=\frac{1}{\left|W_{\xi,\varepsilon}\right|_{p,\varepsilon}^{p}}
\Big(\|W_{\xi,\varepsilon}\|_{\varepsilon}^{2}
+t_{W_{\xi,\varepsilon}}^2\frac{1}{\varepsilon^{3}}\int_{M}W_{\xi,\varepsilon}^{2}\phi_{W_{\xi,\varepsilon}} d\mu_{g} \Big)
\to
\frac{\| U\|_{H^1}^{2}}{|U|_{p}^{p}}=1.
\]
\end{proof}
Now let us define, for every fixed $\varepsilon>0$, the continuous map
\begin{equation}\label{Psieps}
\Psi_{\varepsilon}:= \xi\in M
\mapsto
t_{W_{\xi,\varepsilon}} W_{\xi,\varepsilon}\in \mathcal{N}_{\varepsilon}
\end{equation}
and
\begin{equation}\label{minfty}
m_{\infty}
:=
\frac{p-2}{2p}|U|_{p}^{p}
=
\frac{p-2}{2p}\|U\|_{H^1}^{2},
\end{equation}
which corresponds to the energy level of $U$ with respect to equation \eqref{eq:problim}.

Using Lemma \ref{lem:limiti} we have
\begin{lemma}\label{lem:Psieps}
For every $\delta>0$ there exists $\varepsilon_{0}=\varepsilon_{0}(\delta)>0$ such that, for every $\varepsilon\in (0,\varepsilon_{0})$, and for every $\xi\in M$, $J_{\varepsilon}(\Psi_{\varepsilon}(\xi))<m_{\infty}+\delta$.
\end{lemma}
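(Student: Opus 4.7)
The plan is to combine the Nehari-manifold identity for $J_\varepsilon$ with the asymptotic limits from Lemma~\ref{lem:limiti}. Since $\Psi_\varepsilon(\xi) = t_{W_{\xi,\varepsilon}} W_{\xi,\varepsilon} \in \mathcal{N}_\varepsilon$ and $W_{\xi,\varepsilon} \ge 0$, using the Nehari relation to eliminate the $L^p$ term together with the homogeneity $\Phi(tu) = t^2 \Phi(u)$ from Lemma~\ref{lem:BP2}, one obtains the exact identity
\begin{equation*}
J_\varepsilon(\Psi_\varepsilon(\xi)) = \left(\tfrac{1}{2} - \tfrac{1}{p}\right) t_{W_{\xi,\varepsilon}}^2 \|W_{\xi,\varepsilon}\|_\varepsilon^2 + \left(\tfrac{1}{4} - \tfrac{1}{p}\right) t_{W_{\xi,\varepsilon}}^4 \frac{1}{\varepsilon^3}\int_M \phi_{W_{\xi,\varepsilon}} W_{\xi,\varepsilon}^2 \, d\mu_g,
\end{equation*}
which is the starting point.

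Next, I would invoke parts (\ref{1lem:limiti}), (\ref{2blem:limiti}), and (\ref{3lem:limiti}) of Lemma~\ref{lem:limiti}: the factor $\|W_{\xi,\varepsilon}\|_\varepsilon^2$ converges to $\|U\|_{H^1}^2$, the integral $\frac{1}{\varepsilon^3}\int_M \phi_{W_{\xi,\varepsilon}} W_{\xi,\varepsilon}^2 \, d\mu_g$ tends to zero, and $t_{W_{\xi,\varepsilon}} \to 1$. Since the second summand is bounded by a vanishing quantity times a bounded power of $t_{W_{\xi,\varepsilon}}$, the right-hand side converges to $\left(\frac{1}{2} - \frac{1}{p}\right) \|U\|_{H^1}^2 = m_\infty$ as $\varepsilon \to 0$, for each fixed $\xi$.

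The main obstacle is making this convergence uniform in $\xi \in M$. The integrals appearing in the proof of Lemma~\ref{lem:limiti} are expressed in normal coordinates based at $\xi$, with integrands depending on $g_\xi^{ij}(\varepsilon z)$ and $|g_\xi(\varepsilon z)|^{1/2}$. By compactness of $M$ and smoothness of $g$, these coefficients, as well as the quantities in the expansion \eqref{asymptg}, are uniformly bounded on $B(0,r)$ independently of $\xi$, and the dominating functions used in the Dominated Convergence arguments (built from $U$, $\nabla U$, $\chi$ and $\chi'$) do not depend on $\xi$ at all. Hence the limits in parts (\ref{1lem:limiti})--(\ref{3lem:limiti}) of Lemma~\ref{lem:limiti} hold uniformly in $\xi$, and given $\delta > 0$ one can choose $\varepsilon_0 = \varepsilon_0(\delta)$ so small that $J_\varepsilon(\Psi_\varepsilon(\xi)) < m_\infty + \delta$ for every $\varepsilon \in (0,\varepsilon_0)$ and every $\xi \in M$, as claimed.
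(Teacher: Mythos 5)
Your proof is correct and follows the paper's route exactly: the paper derives Lemma~\ref{lem:Psieps} directly from Lemma~\ref{lem:limiti} (via the Nehari identity $J_\varepsilon(u)=(\tfrac12-\tfrac1p)\|u\|_\varepsilon^2+(\tfrac14-\tfrac1p)\tfrac{1}{\varepsilon^3}\int_M\phi_u u^2\,d\mu_g$ and the homogeneity of $\Phi$) without further comment. You also correctly identify and address the one implicit point, namely that the limits in parts (\ref{1lem:limiti})--(\ref{3lem:limiti}) of Lemma~\ref{lem:limiti} hold uniformly in $\xi\in M$ by compactness of $M$, smoothness of $g$, and the $\xi$-independence of the dominating functions, which is exactly what is needed for the stated uniform bound.
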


As an immediate consequence of Lemma \ref{lem:Psieps}, using Lemma \ref{lemNehari}, we have that there exists $C>0$ such that
\begin{equation}\label{rem:limsup}
C
\leq \liminf_{\varepsilon\rightarrow0}m_{\varepsilon}
\leq \limsup_{\varepsilon\rightarrow0}m_{\varepsilon}
\leq m_{\infty}.
\end{equation}

Now, as in \cite{BBM}, we need to consider a {\em good} partition of the manifold $M$ which we define as follows.
\begin{definition}\label{defgoodp}
For a given $\varepsilon>0$ we say that a finite partition ${\mathcal P}_{\varepsilon}=\{ P_{j}^{\varepsilon}\} _{j\in\Lambda_{\varepsilon}}$
of the manifold $M$ is a {\em good} partition of $M$ if:
\begin{enumerate}[label=(\arabic{*}), ref=\arabic{*}]
	\item for any $j\in\Lambda_{\varepsilon}$ the set $P_{j}^{\varepsilon}$
	is closed;
	\item  $P_{i}^{\varepsilon}\cap P_{j}^{\varepsilon}\subset\partial P_{i}^{\varepsilon}\cap\partial P_{j}^{\varepsilon}$ for any $i\ne j$;
	\item \label{def3} there exist $r_{1}(\varepsilon)>0$ 
	such that there are points $q_{j}^{\varepsilon}\in P_{j}^{\varepsilon}$
	for which  $B_{g}(q_{j}^{\varepsilon},\varepsilon)\subset P_{j}^{\varepsilon}\subset 
	B_{g}(q_{j}^{\varepsilon},r_{1}(\varepsilon))$,
	with $r_{1}(\varepsilon)\ge
	C\varepsilon$ for
	some positive constant $C$;
	\item \label{def4}there exists $\nu(M)\in\mathbb{N}$, independent of $\varepsilon$, such that every $\xi\in M$ is contained in
	at most $\nu(M)$ balls $B_{g}(q_{j}^{\varepsilon},r_{1}(\varepsilon))$. 
\end{enumerate}
\end{definition}

The existence of {\em good} partitions easily follows observing that, for $\varepsilon$ small enough, condition (\ref{def4}) in Definition \ref{defgoodp} can be satisfied by the compactness of $M$. Thus, without loss of generality, we can assume that, given $\delta>0$,  $\varepsilon_{0}(\delta)$ in Lemma \ref{lem:Psieps} is sufficiently small to ensure also the existence of a {\em good} partition for every $\varepsilon\in(0,\varepsilon_0(\delta))$.

Thus, arguing  as in \cite[Lemma 5.3]{BBM}, we get the following result which prevents vanishing on the Nehari manifold.

\begin{lemma}\label{lem:gamma}
There exists a constant $\gamma>0$ such that for any $\delta>0$ and for any $\varepsilon\in(0,\varepsilon_{0}(\delta))$, given any {\em good} partition ${\mathcal P}_{\varepsilon}=\{ P_{j}^{\varepsilon}\} _{j}$ of the manifold $M$ and for any function $u\in{\mathcal N}_{\varepsilon}$, there exists $\bar{j}\in \Lambda_\varepsilon$ such that 
\[
\frac{1}{\varepsilon^{3}}\int_{P_{\bar{j}}^{\varepsilon}}|u^{+}|^{p}d\mu_g\ge\gamma.
\]
\end{lemma}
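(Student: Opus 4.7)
The plan is to argue by contradiction, exploiting that on $\mathcal{N}_{\varepsilon}$ the norm $|u^{+}|_{p,\varepsilon}$ is bounded below (by (\ref{disc0}) of Lemma~\ref{lemNehari}) while the Nehari identity together with $\phi_{u}\ge 0$ (granted by $2am<1$ via (\ref{bLem:BP}) of Lemma~\ref{lem:BP}) yields $\|u\|_{\varepsilon}^{2}\le |u^{+}|_{p,\varepsilon}^{p}$. If the conclusion failed, one could extract sequences $\gamma_{n}\downarrow 0$, $\varepsilon_{n}\in(0,\varepsilon_{0}(\delta))$, good partitions $\{P_{j}^{\varepsilon_{n}}\}_{j\in\Lambda_{\varepsilon_{n}}}$ and $u_{n}\in\mathcal{N}_{\varepsilon_{n}}$ with $\frac{1}{\varepsilon_{n}^{3}}\int_{P_{j}^{\varepsilon_{n}}}|u_{n}^{+}|^{p}\,d\mu_{g}<\gamma_{n}$ for every $j$, and the goal becomes to contradict $|u_{n}^{+}|_{p,\varepsilon_{n}}\ge C>0$.

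The central ingredient is a local Sobolev inequality at scale $\varepsilon$: for every $u\in H^{1}(M)$ and every cell $P_{j}^{\varepsilon}$,
\[
\left(\frac{1}{\varepsilon^{3}}\int_{P_{j}^{\varepsilon}}|u|^{p}\,d\mu_{g}\right)^{\!\!2/p}
\le C\left(\frac{1}{\varepsilon}\int_{P_{j}^{\varepsilon}}|\nabla_{g} u|^{2}\,d\mu_{g}
+\frac{1}{\varepsilon^{3}}\int_{P_{j}^{\varepsilon}} u^{2}\,d\mu_{g}\right),
\]
with a constant $C$ independent of $\varepsilon$ and $j$. I would prove it by pulling back via $\exp_{q_{j}^{\varepsilon}}$ and rescaling $y=\varepsilon z$: property \eqref{def3} combined with the bounded-overlap property \eqref{def4} of Definition~\ref{defgoodp} (the latter forcing $r_{1}(\varepsilon)\le C\varepsilon$, otherwise the balls $B_{g}(q_{j}^{\varepsilon},r_{1}(\varepsilon))$ could not have bounded overlap as $\varepsilon\to 0$) confines the rescaled cell to a Euclidean ball of fixed radius, and the expansion \eqref{asymptg} makes the rescaled metric uniformly close to the flat one, so the standard Euclidean Sobolev embedding $H^{1}\hookrightarrow L^{p}$ (valid for $p\le 6$) on this fixed ball yields the required uniform constant.

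Granted this estimate, combining the tautology
\[
\frac{1}{\varepsilon_{n}^{3}}\int_{P_{j}^{\varepsilon_{n}}}|u_{n}^{+}|^{p}\,d\mu_{g}
=\left(\frac{1}{\varepsilon_{n}^{3}}\int_{P_{j}^{\varepsilon_{n}}}|u_{n}^{+}|^{p}\,d\mu_{g}\right)^{\!\!(p-2)/p}
\left(\frac{1}{\varepsilon_{n}^{3}}\int_{P_{j}^{\varepsilon_{n}}}|u_{n}^{+}|^{p}\,d\mu_{g}\right)^{\!\!2/p}
\]
with the hypothesis (controlling the first factor by $\gamma_{n}^{(p-2)/p}$) and the local Sobolev inequality (applied to the second), and then summing over $j$ (the cells overlap only on their boundaries, which are $\mu_{g}$-negligible, so the partition property reconstructs the full integrals), one arrives at
\[
|u_{n}^{+}|_{p,\varepsilon_{n}}^{p}
\le C\,\gamma_{n}^{(p-2)/p}\,\|u_{n}^{+}\|_{\varepsilon_{n}}^{2}
\le C\,\gamma_{n}^{(p-2)/p}\,|u_{n}^{+}|_{p,\varepsilon_{n}}^{p},
\]
where the last step uses $\|u_{n}\|_{\varepsilon_{n}}^{2}\le |u_{n}^{+}|_{p,\varepsilon_{n}}^{p}$ from the Nehari identity. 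Dividing by $|u_{n}^{+}|_{p,\varepsilon_{n}}^{p}>0$ gives $1\le C\,\gamma_{n}^{(p-2)/p}$, a contradiction with $\gamma_{n}\to 0$; any $\gamma<C^{-p/(p-2)}$ then serves as the universal constant claimed.

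The main obstacle I expect is precisely the uniformity (in both $\varepsilon$ and $j$) of the constant in the local Sobolev inequality: the cells $P_{j}^{\varepsilon}$ need not be smooth, so the pullback--rescale procedure must be carried out with care, and it is exactly here that both properties \eqref{def3} and \eqref{def4} of a good partition are used in an essential way.
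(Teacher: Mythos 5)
Your overall strategy is the same as the paper's: start from the Nehari identity and the sign $\phi_u\ge 0$ to get $\|u\|_{\varepsilon}^{2}\le|u^{+}|_{p,\varepsilon}^{p}$, split each cell integral as a product of a $(p-2)/p$-power (controlled by the sup over cells) and a $2/p$-power (controlled by Sobolev), then sum. Writing it as a contradiction rather than a direct estimate is only a cosmetic difference. The lower bound $|u^{+}|_{p,\varepsilon}\ge C$ from (\ref{disc0}) of Lemma~\ref{lemNehari} (which you invoke to divide at the end) is exactly what the paper uses to divide by $\|u\|_{\varepsilon}^{2}>0$ in its direct version. So conceptually, you have rediscovered the argument.

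The genuine gap is in your central tool, the ``local Sobolev inequality'' with the cell $P_{j}^{\varepsilon}$ on \emph{both} sides. An arbitrary cell of a good partition is only sandwiched between two concentric balls of comparable radii; nothing in Definition~\ref{defgoodp} prevents it from having spiky or fractal-like boundary, and for such domains an $H^{1}(\Omega)\hookrightarrow L^{p}(\Omega)$ embedding with a constant depending only on the two radii simply fails (there is no uniform extension operator). This is precisely why the paper does \emph{not} attempt a Sobolev inequality on the cell itself: it multiplies $u^{+}$ by a smooth cut-off $\chi_{\varepsilon}(|\cdot-q_{j}^{\varepsilon}|)$ supported in the larger ball $B_{g}(q_{j}^{\varepsilon},r_{1}(\varepsilon))$, thus producing a function $u_{j}\in H^{1}(M)$, and applies the global embedding~\eqref{imb} to $u_{j}$. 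The resulting right-hand side then lives on $B_{g}(q_{j}^{\varepsilon},r_{1}(\varepsilon))$, not on $P_{j}^{\varepsilon}$; and it is in the final summation over $j$ that the bounded-overlap property~(\ref{def4}) enters — to bound $\sum_{j}\|u_{j}\|_{\varepsilon}^{2}$ by $\nu(M)\|u\|_{\varepsilon}^{2}$ up to constants. In your version the balls on the right-hand side must replace the cells, and the ``cells are disjoint up to null sets'' argument no longer reconstructs $\|u_{n}^{+}\|_{\varepsilon_{n}}^{2}$: you need (\ref{def4}) exactly at this point.

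A secondary issue: your parenthetical claim that (\ref{def4}) \emph{forces} $r_{1}(\varepsilon)\le C\varepsilon$ is not correct. Bounded overlap of the balls $B_{g}(q_{j}^{\varepsilon},r_{1}(\varepsilon))$ is perfectly compatible with $r_{1}(\varepsilon)/\varepsilon\to\infty$, provided the centers are separated on scale $r_{1}(\varepsilon)$; the definition only stipulates the lower bound $r_{1}(\varepsilon)\ge C\varepsilon$. Fortunately this upper bound is not actually needed once one works with the ball rather than the cell, because the Sobolev embedding constant for $p<6$ does not degenerate for large balls; the cut-off derivative bound $|\chi_{\varepsilon}'|\le K/\varepsilon$ is the ingredient that keeps the extra gradient term of order one.

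So: same strategy, but replace your cell-to-cell Sobolev estimate by a cut-off extension plus the global inequality~\eqref{imb}, and trade the tiling argument in the summation for the bounded-overlap hypothesis. With those two repairs the proof goes through and coincides with the one in the paper.
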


\begin{proof}
Observe that, if $u \in \mathcal{N}_{\varepsilon}$, by (\ref{bLem:BP}) of Lemma \ref{lem:BP},
\begin{equation}\label{conticini}
\begin{split}
\|u\|_{\varepsilon}^{2}
& =
\frac{1}{\varepsilon^{3}} |u^{+}|_{p}^{p}
-\frac{q^2}{\varepsilon^{3}}\int_{M}\phi_u u^{2} d\mu_{g} \le\frac{1}{\varepsilon^{3}}|u^{+}|_{p}^{p}
=
\sum_{j\in \Lambda_\varepsilon}\frac{1}{\varepsilon^{3}}\int_{P_{j}^\varepsilon}|u^{+}|^{p}d\mu_{g}\\
&=
\sum_{j\in \Lambda_\varepsilon} \Big( \frac{1}{\varepsilon^{3}} \int_{P_{j}^\varepsilon}|u^{+}|^{p}d\mu_{g}  \Big)^{1-\frac{2}{p}}
\Big( \frac{1}{\varepsilon^{3}}\int_{P_{j}^\varepsilon}|u^{+}|^{p}d\mu_{g}  \Big)^{\frac{2}{p}}\\
&\leq
\max_{j\in \Lambda_\varepsilon}\left\{\Big( \frac{1}{\varepsilon^{3}} \int_{P_{j}^\varepsilon}|u^{+}|^{p}d\mu_{g}  \Big)^{1-\frac{2}{p}}\right\}
\sum_{j\in \Lambda_\varepsilon} \Big( \frac{1}{\varepsilon^{3}}\int_{P_{j}^\varepsilon}|u^{+}|^{p}d\mu_{g}  \Big)^{\frac{2}{p}}.
\end{split}
\end{equation}
Let now $\chi_\varepsilon\in C^\infty([0,+\infty[,[0,1])$ such that
\[
\chi_\varepsilon(t):=
\begin{cases}
	1&\text{if } t\leq r_2(\varepsilon)\\
	0&\text{if } t>r_1(\varepsilon)
\end{cases}
,\quad
|\chi_\varepsilon'|\leq \frac{K}{\varepsilon}, \text{for } K>0,
\]
and, for every $j\in \Lambda_\varepsilon$, $u_j:=u^+ \chi_\varepsilon(|\cdot-q_j^\varepsilon|)$, where $r_i(\varepsilon)$'s and $q_j^\varepsilon$ come from Definition \ref{defgoodp}.\\
For every $j\in \Lambda_\varepsilon$ we have that $u_j\in H^1(M)$ and
\begin{align*}
\Big( \frac{1}{\varepsilon^{3}}\int_{P_{j}^\varepsilon}|u^{+}|^{p}d\mu_{g}  \Big)^{\frac{2}{p}}
&\leq
|u_j|_{p,\varepsilon}^2
\leq
C\|u_j\|_\varepsilon^2
=
C (\| u^+|_{P_j^\varepsilon}\|_\varepsilon^2
+ \| {u_j}|_{B_g(q_j^\varepsilon,r_1(\varepsilon))\setminus P_j^\varepsilon}\|_\varepsilon^2)\\
&\leq
C\Big(
\| u^+|_{P_j^\varepsilon}\|_\varepsilon^2
+\frac{1}{\varepsilon}\int_{B_g(q_j^\varepsilon,r_1(\varepsilon))\setminus P_j^\varepsilon} |\nabla_g u^+|^2 d\mu_g
+\frac{K^2+\omega}{\varepsilon^3}\int_{B_g(q_j^\varepsilon,r_1(\varepsilon))\setminus P_j^\varepsilon} |u^+|^2 d\mu_g 
\Big)\\
&\leq
C\Big(
\| u^+|_{P_j^\varepsilon}\|_\varepsilon^2
+ \frac{K^2+\omega}{\omega} \| u^+|_{B_g(q_j^\varepsilon,r_1(\varepsilon))\setminus P_j^\varepsilon}\|_\varepsilon^2
\Big).
\end{align*}
Thus, using (\ref{def4}) in Definition \ref{defgoodp},
\begin{equation}
\label{conticinifin}
\sum_{j\in \Lambda_\varepsilon}\Big( \frac{1}{\varepsilon^{3}}\int_{P_{j}^\varepsilon}|u^{+}|^{p}d\mu_{g}  \Big)^{\frac{2}{p}}
\leq
C\frac{K^2+2\omega}{\omega} \nu(M) \| u^+\|_\varepsilon^2
\leq
C\frac{K^2+2\omega}{\omega} \nu(M) \| u\|_\varepsilon^2.
\end{equation}
Hence, by \eqref{conticini} and \eqref{conticinifin},
\[
\max_{j\in \Lambda_\varepsilon}\left\{\Big( \frac{1}{\varepsilon^{3}} \int_{P_{j}^\varepsilon}|u^{+}|^{p}d\mu_{g}  \Big)^{1-\frac{2}{p}}\right\}
\geq C
\]
and we conclude.
\end{proof}


Now, we can refine previuos result, obtaining concentration of low energy functions on the Nehari manifold. This is a key tool to get the multiplicity claim in Theorem \ref{th:main}.
\begin{proposition}
\label{prop:baricentro}For any $\eta\in(0,1)$ there exists $\delta_{0}<m_{\infty}$
such that for any $\delta\in(0,\delta_{0})$, for any $\varepsilon\in(0,\varepsilon_{0}(\delta))$, with $\varepsilon_{0}(\delta)$ as in Lemma \ref{lem:Psieps}, and for any function $u\in{\mathcal N}_{\varepsilon}\cap J_{\varepsilon}^{m_{\infty}+\delta}$
we can find a point $q=q(u)\in M$ such that
\[
\frac{1}{\varepsilon^{3}}\int_{B_g(q,r(M)/2)}|u^{+}|^{p} d\mu_g >\left(1-\eta\right)\frac{2p}{p-2}m_{\infty}.
\]
\end{proposition}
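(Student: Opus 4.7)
We argue by contradiction. Suppose the statement fails: there exist $\eta\in(0,1)$ and sequences $\delta_n\to 0^+$, $\varepsilon_n\to 0^+$ with $\varepsilon_n\in(0,\varepsilon_0(\delta_n))$, and $u_n\in\mathcal{N}_{\varepsilon_n}\cap J_{\varepsilon_n}^{m_\infty+\delta_n}$ such that, using $|U|_p^p=\tfrac{2p}{p-2}m_\infty$, for every $q\in M$,
$$\frac{1}{\varepsilon_n^3}\int_{B_g(q,r(M)/2)}|u_n^+|^p\,d\mu_g\le (1-\eta)|U|_p^p.$$
On $\mathcal{N}_{\varepsilon_n}$ we have the identity
$$J_{\varepsilon_n}(u_n)=\bigl(\tfrac12-\tfrac1p\bigr)\|u_n\|_{\varepsilon_n}^2+\bigl(\tfrac14-\tfrac1p\bigr)\tfrac{1}{\varepsilon_n^3}\int_M\phi_{u_n}u_n^2\,d\mu_g=\tfrac{p-2}{2p}|u_n^+|_{p,\varepsilon_n}^p-\tfrac{1}{4\varepsilon_n^3}\int_M\phi_{u_n}u_n^2\,d\mu_g,$$
and Lemma \ref{lem:BP}(\ref{aLem:BP}) with \eqref{imb} yields $\tfrac{1}{\varepsilon_n^3}\int_M\phi_{u_n}u_n^2\,d\mu_g\le C\varepsilon_n^{5/2}\|u_n\|_{\varepsilon_n}^4$. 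Hence $\|u_n\|_{\varepsilon_n}\le C$ and $|u_n^+|_{p,\varepsilon_n}^p\le|U|_p^p+o_n(1)$.

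Fix a good partition with $r_1(\varepsilon_n)$ of order $\varepsilon_n$ and apply Lemma \ref{lem:gamma}: there is an index $\bar j_n$ with $\varepsilon_n^{-3}\int_{P_{\bar j_n}^{\varepsilon_n}}|u_n^+|^p\,d\mu_g\ge\gamma>0$. Setting $q_n:=q_{\bar j_n}^{\varepsilon_n}$, so $P_{\bar j_n}^{\varepsilon_n}\subset B_g(q_n,C\varepsilon_n)$, rescale by
$$\tilde u_n(z):=u_n(\exp_{q_n}(\varepsilon_n z))\,\chi_r(\varepsilon_n|z|),\qquad z\in\mathbb{R}^3.$$
Using \eqref{asymptg}, $\{\tilde u_n\}$ is bounded in $H^1(\mathbb{R}^3)$ with $\|\tilde u_n\|_{H^1}^2=\|u_n\|_{\varepsilon_n}^2+o_n(1)$ and $|\tilde u_n^+|_p^p\le|u_n^+|_{p,\varepsilon_n}^p+o_n(1)$. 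Up to a subsequence, $\tilde u_n\rightharpoonup\tilde u$ in $H^1(\mathbb{R}^3)$ and $\tilde u_n\to\tilde u$ in $L^s_{\mathrm{loc}}(\mathbb{R}^3)$ for $s\in[1,6)$; the bump bound $\int_{B(0,C)}|\tilde u_n^+|^p\ge\gamma$ forces $\tilde u^+\not\equiv 0$.

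The decisive step is to show $|\tilde u^+|_p^p\ge|U|_p^p$. With $\tilde w_n:=\tilde u_n-\tilde u$, the Brezis-Lieb lemma gives
$$\|\tilde u_n\|_{H^1}^2=\|\tilde u\|_{H^1}^2+\|\tilde w_n\|_{H^1}^2+o_n(1),\qquad |\tilde u_n^+|_p^p=|\tilde u^+|_p^p+|\tilde w_n^+|_p^p+o_n(1).$$
Projecting $\tilde u$ onto the limit Nehari manifold $\mathcal{N}_\infty$ produces $t^*>0$ with $t^*\tilde u\in\mathcal{N}_\infty$ and $J_\infty(t^*\tilde u)\ge m_\infty$, so $(t^*)^p|\tilde u^+|_p^p\ge |U|_p^p$. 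If $\limsup_n|\tilde w_n^+|_p>0$, Lions' concentration-compactness (possibly combined with a second application of Lemma \ref{lem:gamma} in a coordinate patch disjoint from the one at $q_n$) yields a nontrivial weak limit $\tilde v$, and an analogous projection gives $t^{**}>0$ with $(t^{**})^p|\tilde v^+|_p^p\ge|U|_p^p$. Weak-lower-semicontinuity applied to the almost-Nehari identity $\|\tilde u_n\|_{H^1}^2=|\tilde u_n^+|_p^p+o_n(1)$ forces $t^*,t^{**}\le 1$, hence $|\tilde u^+|_p^p+|\tilde v^+|_p^p\ge 2|U|_p^p$, whereas $|\tilde u^+|_p^p+|\tilde v^+|_p^p\le\liminf_n|\tilde u_n^+|_p^p\le|U|_p^p$: a contradiction. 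Therefore $|\tilde w_n^+|_p\to 0$, the almost-Nehari identity gives $\|\tilde u\|_{H^1}^2\le|\tilde u^+|_p^p$, hence $t^*\le 1$, and finally $|\tilde u^+|_p^p\ge(t^*)^p|\tilde u^+|_p^p\ge|U|_p^p$.

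Since $\tilde u_n\to\tilde u$ strongly in $L^p(\mathbb{R}^3)$ with $|\tilde u^+|_p^p\ge|U|_p^p$, for each $\theta\in(0,\eta|U|_p^p)$ there is $R>0$ with $\int_{|z|>R}|\tilde u_n^+|^p\,dz<\theta$ for $n$ large. Choosing $n$ so that $R\varepsilon_n<r(M)/2$, one has $\exp_{q_n}(B(0,R\varepsilon_n))\subset B_g(q_n,r(M)/2)$ and consequently
$$\frac{1}{\varepsilon_n^3}\int_{B_g(q_n,r(M)/2)}|u_n^+|^p\,d\mu_g\ge|\tilde u^+|_p^p-\theta\ge|U|_p^p-\theta>(1-\eta)|U|_p^p,$$
contradicting the standing bound at $q=q_n$. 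The main obstacle is the concentration-compactness step of the third paragraph: ruling out splitting of $\tilde w_n$ by identifying any secondary weak limit with an independent ground-state-type contribution of energy at least $m_\infty$, while simultaneously controlling the Nehari projection factors $t^*,t^{**}$ via a careful interplay between weak lower semicontinuity and the almost-Nehari identity transferred from $\{\tilde u_n\}$.
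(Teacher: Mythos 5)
Your proposal takes a genuinely different route from the paper's, but it contains a gap that undermines the decisive step. The paper's proof applies the Ekeland Variational Principle (see \eqref{eq:ps} and Appendix \ref{EVP}) to turn the constrained-minimization property of $u_k$ into the approximate criticality bound $|J'_{\varepsilon_k}(u_k)[\varphi]|\le C\sqrt{\delta_k}\|\varphi\|_{\varepsilon_k}$, passes this through the rescaling, and thereby shows that the weak limit $w$ of the blow-up profiles solves \eqref{eq:weq} and is nontrivial, hence $w=U$; the contradiction is then purely local, via $L^p_{\mathrm{loc}}$ convergence and the expansion \eqref{asymptg} on a fixed ball $B(0,T)$. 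You bypass Ekeland entirely and try to reach $|\tilde u^+|_p^p\ge |U|_p^p$ by a Brezis--Lieb splitting plus Nehari-projection argument.

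The gap is in the claimed identity
\[
\|\tilde u_n\|_{H^1(\mathbb{R}^3)}^2=\|u_n\|_{\varepsilon_n}^2+o_n(1),
\qquad\text{hence}\qquad
\|\tilde u_n\|_{H^1}^2=|\tilde u_n^+|_p^p+o_n(1).
\]
The rescaled, cut-off function $\tilde u_n(z)=u_n(\exp_{q_n}(\varepsilon_n z))\chi_r(\varepsilon_n|z|)$ only sees the restriction of $u_n$ to the geodesic ball $B_g(q_n,r)$; any mass of $u_n$ outside that ball is simply discarded. What one can prove (and what the paper proves for $w_k$) is only the one-sided inequality $\|\tilde u_n\|_{H^1}^2\le C\|u_n\|_{\varepsilon_n}^2$, with a constant $C$ coming from the metric and the cut-off, \emph{not} an asymptotic equality. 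A priori $u_n$ could split its mass between several remote regions of $M$ — ruling this out is precisely the content of the Proposition, so one cannot assume it. Consequently the ``almost-Nehari identity'' on $\mathbb{R}^3$ has no justification: you only get $\|\tilde u_n\|_{H^1}^2\le |\tilde u_n^+|_p^p + o_n(1)$ after throwing away part of $u_n$ is \emph{not} available either, and without it the bound $t^*\le 1$ (and $t^{**}\le 1$) collapses, so $|\tilde u^+|_p^p\ge |U|_p^p$ is not established. You flag the concentration--compactness dichotomy as the ``main obstacle,'' and indeed it is: to rule out a secondary bubble you would need a Nehari-type control on each piece, which requires exactly the approximate criticality that the Ekeland step provides. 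A further, subsidiary issue: the comparison $|\tilde u_n^+|_p^p\le |u_n^+|_{p,\varepsilon_n}^p+o_n(1)$ is also not uniform, since $|g_{q_n}(y)|^{1/2}$ is only $1+O(|y|^2)$ on $B(0,r)$ with $r$ fixed, so the volume-element correction is $O(r^2)$, not $o_n(1)$; the paper avoids this by only ever using \eqref{asymptg} on a fixed ball $B(0,T)$ where the correction is $O(\varepsilon_k^2T^2)$.

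In short: the paper's Ekeland step is not a cosmetic convenience; it is the mechanism that lets one identify the blow-up limit with $U$ via the equation, which is then used locally. Your approach attempts to replace this by a global variational / Brezis--Lieb argument on $\mathbb{R}^3$, but the transfer of the Nehari constraint from $M$ to $\mathbb{R}^3$ fails, and the splitting analysis is left unfinished. If you want to avoid the Ekeland machinery, you would have to perform the Brezis--Lieb decomposition on the manifold itself (splitting $u_n$ into finitely many pieces concentrating at distinct points of $M$) and show each piece carries energy $\ge m_\infty-o_n(1)$; this is substantially more work than the Ekeland route and still needs some surrogate for approximate criticality.
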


\begin{proof}
First let us show our thesis for $u\in{\mathcal N}_{\varepsilon}\cap J_{\varepsilon}^{m_{\varepsilon}+2\delta}$.\\
Assume by contradiction that there exists $\eta\in(0,1)$ and sequences $\{\delta_{k}\},\{\varepsilon_{k}\}\subset (0,+\infty)$, $\{ u_{k}\}\subset \mathcal{N}_{\varepsilon_{k}}$, such that $\delta_{k},\varepsilon_{k}\rightarrow0$ as $k\to+\infty$,
\begin{equation}\label{eq:mepsk}
m_{\varepsilon_{k}}
\leq
J_{\varepsilon_{k}}(u_{k})
\leq m_{\varepsilon_{k}}+2\delta_{k}
\end{equation}
and, for every $q\in M$, 
\begin{equation}\label{eq:contradiction}
\frac{1}{\varepsilon_{k}^{3}}\int_{B_{g}(q,r(M)/2)}|u_{k}^{+}|^{p}d\mu_{g}\le\left(1-\eta\right)\frac{2p}{p-2}m_{\infty}.
\end{equation}
First observe that $\{\|u_k\|_{\varepsilon_{k}}\}$ is bounded.
Indeed, by \eqref{rem:limsup}, \eqref{eq:mepsk}, for $k$ large enough,
\begin{align*}
2m_\infty
&\geq
m_{\varepsilon_{k}} +2 \delta_{k}
\geq
J_{\varepsilon_{k}} (u_k)
- \frac{1}{p} J_{\varepsilon_{k}}'(u_k)[u_k]
=
\Big(\frac{1}{2}-\frac{1}{p}\Big) \|u_k\|_{\varepsilon_{k}}^2
+ \Big(\frac{1}{4}-\frac{1}{p}\Big) \frac{1}{\varepsilon_{k}^3} \int_M \phi_{u_k} u_k^2 d\mu_g\\
&\geq
\Big(\frac{1}{2}-\frac{1}{p}\Big) \|u_k\|_{\varepsilon_{k}}^2.
\end{align*}
Applying the Ekeland Principle (see \cite{deFig}) as in \cite[Lemma 5.4]{BBM}\footnote{For completeness, in Appendix \ref{EVP}  we give some  details.}, we get that, for every $\varphi\in H^1(M)$,
\begin{equation}\label{eq:ps}
	\left|J'_{\varepsilon_{k}}(u_{k})[\varphi]\right|
	\le
	C\sqrt{\delta_{k}}\|\varphi\|_{\varepsilon_{k}}.
\end{equation}
Moreover, by Lemma \ref{lem:gamma}, there exists a constant $\gamma>0$ such that for any $\delta>0$ and for any $k$ large enough, given a {\em good} partition ${\mathcal P}_{\varepsilon_k}=\{ P_{j}^{\varepsilon_k}\}_{j\in\Lambda_{\varepsilon_k}}$ of the manifold $M$, there exists $\bar{j}_k\in \Lambda_{\varepsilon_k}$ such that 
\[
	\frac{1}{\varepsilon_k^{3}}\int_{P_{\bar{j}_k}^{\varepsilon_k}}| u_k^{+}|^{p}d\mu_g\ge\gamma.
\]
Let $q_{k}\in P_{\bar{j}_k}^{\varepsilon_k}$ as in (\ref{def3}) of Definition \ref{defgoodp} and
\begin{equation}
\label{wk}
w_{k}(z):=u_{k}(\exp_{q_{k}}(\varepsilon_{k}z))\chi_{r}(\varepsilon_{k}|z|),
\end{equation}
where
$r$ is the injectivity radius.\\
Moreover, let $\tilde{u}_k(z)=u_k(\exp_{q_{k}}(z))$, $v_k(z)=\tilde{u}_k(\varepsilon_kz)$, and $\chi_k(|z|)=\chi_r(\varepsilon_k |z|)$, so that
\[
w_k(z)=\tilde{u}_k(\varepsilon_k z)\chi(\varepsilon_k |z|)=v_k(z)\chi_k(|z|).
\]
We have that $w_{k}\in H_{0}^{1}(B(0,r/\varepsilon_{k}))\subset H^{1}(\mathbb{R}^{3})$, and
\begin{align*}
\| w_k\|_{H^1}^2
&=
\int_{B(0,\frac{r}{\varepsilon_k})}  \Big|\chi_k(|z|)\nabla v_k(z)+\varepsilon_k\chi_r'(\varepsilon_k|z|)\frac{z}{|z|}v_k(z) \Big|^2 dz
+\int_{B(0,\frac{r}{\varepsilon_k})}  |\chi_k(|z|) v_k(z)|^2 dz
\\
&\le
2\int_{B(0,\frac{r}{\varepsilon_k})}  |\nabla v_k(z)|^2dz
+\frac{8}{r^2}\varepsilon_k^2 \int_{B(0,\frac{r}{\varepsilon_k})} |v_k(z)|^2 dz
+ \int_{B(0,\frac{r}{\varepsilon_k})}  |v_k(z)|^2 dz
\\
&\le
2\varepsilon_k^2\int_{B(0,\frac{r}{\varepsilon_k})}  |\nabla \tilde{u}_k(\varepsilon_k z)|^2dz
+ C \int_{B(0,\frac{r}{\varepsilon_k})}  |\tilde{u}_k(\varepsilon_k z)|^2dz
\\
&\le
C\varepsilon_k^2 \int_{B(0,\frac{r}{\varepsilon_k})}  g_{q_k}^{ij}(\varepsilon_k z)\frac{\partial \tilde{u}_k}{\partial z_i}(\varepsilon_k z)\frac{\partial \tilde{u}_k}{\partial z_j}(\varepsilon_k z) |g_{q_k}(\varepsilon_k z)|^{1/2}dz
+C \int_{B(0,\frac{r}{\varepsilon_k})}  |\tilde{u}_k(\varepsilon_k z)|^2 |g_{q_k}(\varepsilon_k z)|^{1/2} dz
\\
&=
\frac{C}{\varepsilon_k}\int_{B(0,r)}  g_{q_k}^{ij}(y)\frac{\partial \tilde{u}_k}{\partial z_i}(y)\frac{\partial \tilde{u}_k}{\partial z_j}(y) |g_{q_k}(y)|^{1/2}dy
+\frac{C}{\varepsilon_k^3}\int_{B(0,r)}  |\tilde{u}_k(y)|^2 |g_{q_k}(y)|^{1/2} dy
\\
&\leq C\|u_{k}\|_{\varepsilon_{k}}^{2}\le C.
\end{align*}
Thus $w_{k}$ converges weakly in $H^{1}(\mathbb{R}^{3})$ and strongly in $L_{\text{loc}}^{t}(\mathbb{R}^{3})$, $t\in[1,6)$, to a function $w\in H^{1}(\mathbb{R}^{3})$.\\
Let us prove that $w\ge0$ and it solves weakly 
\begin{equation}\label{eq:weq}
-\Delta w+ w=w^{p-1}
\text{ in } \mathbb{R}^3.
\end{equation}
Let $\varphi\in C_{0}^{\infty}(\mathbb{R}^{3})$. There exists $k\in\mathbb{N}$ such that $\operatorname{spt}(\varphi)\subset B(0,r/2\varepsilon_{k})$. Define
\[
\varphi_k:M\to\mathbb{R},
\qquad
\varphi_{k}(x):=\varphi\left(\frac{1}{\varepsilon_{k}}\exp_{q_{k}}^{-1}(x)\right).
\]
We have that
$\operatorname{spt}(\varphi_{k})\subset B_{g}(q_{k},r/2)$ and, being
\begin{equation}
\label{phitildek}
\tilde{\varphi}_{k}(y):=\varphi_{k}(\exp_{q_{k}}(y))=\varphi(y/\varepsilon_k),
\end{equation}
then
\begin{align*}
\|\varphi_{k}\|_{\varepsilon_{k}}^{2}
&=
\frac{1}{\varepsilon_k}\int_{B(0,\frac{r}{2})}  g_{q_k}^{ij}(y)  \frac{\partial \tilde{\varphi}_{k}}{\partial y^i}(y) \frac{\partial \tilde{\varphi}_{k}}{\partial y^j}(y) |g_{q_k}(y)|^{1/2} dy
+ \frac{1}{\varepsilon_k^3} \int_{B(0,\frac{r}{2})} |\tilde{\varphi}_{k}(y)|^2 |g_{q_k}(y)|^{1/2} dy
\\
&=
\int_{B(0,\frac{r}{2\varepsilon_k})}  g_{q_k}^{ij}(\varepsilon_k z)  \frac{\partial \varphi}{\partial z^i}(z) \frac{\partial \varphi}{\partial z^j}(z) |g_{q_k}(\varepsilon_k z)|^{1/2} dz
+   \int_{B(0,\frac{r}{2\varepsilon_k})} |\varphi(z)|^2 |g_{q_k}(\varepsilon_k z)|^{1/2} dz
\\
&\leq
C\|\varphi\|_{H^1}^2.
\end{align*}
Thus, by \eqref{eq:ps}, 
\begin{equation}\label{eq:stella}
|J'_{\varepsilon_{k}}(u_{k})[\varphi_{k}]|
\le C \sqrt{\delta_k} \|\varphi_{k}\|_{\varepsilon_{k}}
\le C \sqrt{\delta_k} \|\varphi\|_{H^1}
\rightarrow0\text{ as }k\rightarrow+\infty.
\end{equation}
On the other hand, observe that
\begin{equation}
\label{convmisto}
\frac{1}{\varepsilon_{k}^3}\int_{M} \phi_{u_{k}} u_{k}  \varphi_{k}  d\mu_g \to 0
\text{ as }k\to+\infty
\end{equation}
since, by \eqref{phitildek},
\begin{align*}
\int_{M} | \varphi_{k}|^3 d\mu_g
&=
\int_{B_{g}(q_k,\frac{r}{2})} | \varphi_{k}|^3 d\mu_g
=
\int_{B(0,\frac{r}{2})}  | \tilde{\varphi}_{k} (y)|^3 |g_{q_{k}}(y)|^{1/2}  dy
\leq
C\varepsilon_{k}^3 \int_{B(0,\frac{r}{2\varepsilon_{k}})} | \varphi(z)|^3 dy
\leq
C\varepsilon_{k}^3 | \varphi|_3^3
\end{align*}
and so, by H\"older inequality and (\ref{aLem:BP}) in Lemma \ref{lem:BP},
\[
\left|\int_{M} \phi_{u_{k}} u_{k}  \varphi_{k} d\mu_g \right|
\leq
|\phi_{u_k}|_3 |u_k|_3 |\varphi_{u_k}|_3
\leq
C \varepsilon_k \| \phi_{u_{k}} \|_{H^2} |u_k|_3
\leq
C \varepsilon_k  |u_k|_2^2  |u_k|_3
=
C \varepsilon_k^5  |u_k|_{2,\varepsilon_{k}}^2  |u_k|_{3,\varepsilon_{k}}
\leq
C \varepsilon_k^5.
\]
Then, using $\sim$ to denote the composition of the funcion with $\exp_{q_{k}}$ as before, since
\[
w_k\Big(\frac{y}{\varepsilon_k}\Big)=\tilde{u}_k(y)\chi_r(y)=\tilde{u}_k(y)
\quad\text{in } B(0,r/2)
\]
by \eqref{wk} and \eqref{phitildek},
\begin{align*}
J'_{\varepsilon_k}(u_{k})[\varphi_{k}]
&=
\frac{1}{\varepsilon_{k}^{3}}
\left(
\varepsilon_{k}^{2}\int_{B_g(q_k,r/2)}\nabla_{g}u_{k}\nabla_{g}\varphi_{k} d\mu_{g}
+ \int_{B_g(q_k,r/2)}u_{k}\varphi_{k} d\mu_{g}
- \int_{B_g(q_k,r/2)} |u_{k}^{+}|^{p-2} u_{k}^{+} \varphi_{k} d\mu_{g}
\right)\\
&\qquad
+o_k(1)\\
&=
\frac{1}{\varepsilon_{k}^{3}}
\left(
\varepsilon_{k}^{2}\int_{B(0,\frac{r}{2})}  g_{q_{k}}^{ij}(y) \partial_i \tilde{u}_k (y) \partial_j \tilde{\varphi}_k (y) 
|g_{q_{k}}(y)|^{1/2} dy
+ \int_{B(0,\frac{r}{2})} \tilde{u}_k(y) \tilde{\varphi}_k(y) |g_{q_{k}}(y)|^{1/2}dy
\right.\\
&\qquad\qquad
\left.
- \int_{B(0,\frac{r}{2})} |\tilde{u}_{k}^{+}(y) |^{p-2}\tilde{u}_{k}^{+}(y) \tilde{\varphi}_{k}(y) |g_{q_{k}}(y)|^{1/2} dy
\right)
+o_k(1)\\
&=
\int_{T_k}  g_{q_{k}}^{ij}(\varepsilon_{k} z) \frac{\partial w_k}{\partial z_i} (z) \frac{\partial \varphi}{\partial z_j} (z) 
|g_{q_{k}}(\varepsilon_{k} z)|^{1/2} dz
+\int_{T_k} w_k(z) \varphi (z) |g_{q_{k}}(\varepsilon_{k} z)|^{1/2}dz
\\
&\qquad\qquad
- \int_{T_k} |w_{k}^{+}(z) |^{p-2} w_{k}^{+}(z) \varphi (z) |g_{q_{k}}(\varepsilon_{k} z)|^{1/2} dz +o_k(1),
\end{align*}
where $T_{k}:=B(0,r/2\varepsilon_{k})\cap\operatorname{spt}(\varphi)$ and, for $k$ large enough, $T_{k}\equiv\operatorname{spt}(\varphi)$.\\
Hence, since by \eqref{asymptg},
\[
\int_{T_k}  g_{q_{k}}^{ij}(\varepsilon_{k} z) \frac{\partial w_k}{\partial z_i} (z) \frac{\partial \varphi}{\partial z_j} (z) |g_{q_{k}}(\varepsilon_{k} z)|^{1/2} dz
=
\int_{T_k} \nabla w_k (z) \nabla \varphi (z)  dz + O(\varepsilon_k^2),
\]
\[
\int_{T_k} w_k(z) \varphi (z) |g_{q_{k}}(\varepsilon_{k} z)|^{1/2}dz
=\int_{T_k} w_k(z) \varphi (z) dz+ O(\varepsilon_k^2),
\]
and
\[
\int_{T_k} |w_{k}^{+}(z) |^{p-2} w_{k}^{+}(z) \varphi (z) |g_{q_{k}}(\varepsilon_{k} z)|^{1/2} dz
=\int_{T_k} |w_{k}^{+}(z) |^{p-2} w_{k}^{+}(z) \varphi (z) dz + O(\varepsilon_k^2),
\]
\eqref{eq:stella}, and the convergence properties of $\{w_k\}$ imply that $w\ge0$ solves \eqref{eq:weq}.\\
Let us show that $w\not\equiv0$.\\
Let $\mathcal{N}_{\infty}:=\left\{ v\in H^{1}(\mathbb{R}^{3})\setminus\left\{ 0\right\} : |\nabla w|_2^2 + \omega |w|_2^2=|w|_{p}^{p}\right\}$ and $T>0$ large enough such that $P_{j_k}^{\varepsilon_k}\subset B_{g}(q_{k},\varepsilon_{k}T)$.
By \eqref{asymptg} and Lemma \ref{lem:gamma}, if $k$ is large enough, we have
\begin{align*}
\int_{B(0,T)}\left(w_{k}^{+}\right)^{p}dz
&=
\frac{1}{\varepsilon_{k}^{3}}\int_{B(0,\varepsilon_{k}T)}\left(u_{k}^{+}(\exp_{q_{k}}(y))\right)^{p} dy
\geq
\frac{C}{\varepsilon_{k}^{3}}\int_{B(0,\varepsilon_{k}T)}\left(u_{k}^{+}(\exp_{q_{k}}(y))\right)^{p}|g_{q_{k}}(y)|^{1/2}dy \\
& \geq
\frac{C}{\varepsilon_{k}^{3}}\int_{P_{k}^{\varepsilon_{k}}}|u_{k}^{+}|^{p}d\mu_{g}\ge\gamma.
\end{align*}
Hence $w\neq0$, $w\in\mathcal{N}_{\infty}$, and so $w=U$ (see \eqref{eq:problim}) and, by \eqref{minfty},
\[
\|w\|_{H^1}^2 
=|w|_{p}^{p}
=\frac{2p}{p-2}m_{\infty}.
\]
Then, there exists $T>0$ such that
\[
\int_{B(0,T)}w^{p}dz>\left(1-\frac{\eta}{8}\right)\frac{2p}{p-2}m_{\infty}
\]
and, since $w_k\rightarrow w$ in $L^p_{\text{loc}}(\mathbb{R}^3)$, for $k$ large enough we get
\begin{equation}\label{contradictionleft}
\int_{B(0,T)}\left(w_{k}^{+}\right)^{p}dz>\left(1-\frac{\eta}{4}\right)\frac{2p}{p-2}m_{\infty}.
\end{equation}
On the other hand, by \eqref{asymptg}, if $\sigma\in (0,3\eta/(4-\eta))$, we have that for $k$ sufficiently large,
\[
|g_{q_k}(\varepsilon_kz)|^{1/2}>1-\sigma
\text{ on } B(0,T)
\]
and so, by \eqref{eq:contradiction}, we get
\begin{align*}
\int_{B(0,T)}\left(w_{k}^{+}\right)^{p}dz
&\leq
\frac1{1-\sigma}\int_{B(0,T)}\left(u_{k}^+(\exp_{q_{k}}(\varepsilon_{k}z))\right)^p |g_{q_k}(\varepsilon_kz)|^{1/2} dz\\
&=
\frac1{(1-\sigma)\varepsilon_k^3}\int_{B(0,\varepsilon_k T)}\left(u_{k}^+(\exp_{q_{k}}(y))\right)^p|g_{q_k}(y)|^{1/2}dy\\
&\leq
\frac1{(1-\sigma)\varepsilon_k^3}\int_{B(0,r(M)/2)}\left(u_{k}^+(\exp_{q_{k}}(y))\right)^p|g_{q_k}(y)|^{1/2}dy\\
&=
\frac1{(1-\sigma)\varepsilon_k^3}\int_{B_{g}(q_k,r(M)/2)}|u_{k}^{+}|^{p}d\mu_{g}\\
&\leq
\frac{1-\eta}{1-\sigma} \frac{2p}{p-2} m_\infty
<
\left(1-\frac{\eta}{4}\right)\frac{2p}{p-2}m_{\infty},
\end{align*}
reaching a contradiction with \eqref{contradictionleft}.\\
Hence, in particular, for any $\eta\in(0,1)$ there exists $\delta_{0}<m_{\infty}$
such that, if $\{\delta_k\}\subset(0,\delta_{0})$ and $\delta_{k}\to 0$ as $k\to + \infty$ and $\varepsilon_k\in(0,\varepsilon_{0}(\delta_k))$, with $\varepsilon_{0}(\delta_k)$ as in Lemma \ref{lem:Psieps} and $\varepsilon_{k}\to 0$ as $k\to + \infty$, for any function $u_k\in{\mathcal N}_{\varepsilon_k}\cap J_{\varepsilon_k}^{m_{\varepsilon_k}+2\delta_k}$ we can find a point $q_k=q_k(u_k)\in M$ such that
\[
\frac{1}{\varepsilon^{3}_k}\int_{B_g(q_k,r(M)/2)}|u_k^{+}|^{p} d\mu_g >\left(1-\eta\right)\frac{2p}{p-2}m_{\infty}.
\]
Then
\[
m_{\varepsilon_{k}} +2\delta_{k}
\geq
J_{\varepsilon_{k}}(u_{k})
= \left(\frac{1}{2}-\frac{1}{p}\right)|u_{k}^{+}|_{p,\varepsilon_{k}}^{p}-\frac{q^{2}}{4\varepsilon_{k}^{3}}\int_{M}u_{k}^{2}\phi_{u_k}d\mu_{g}
>\left(1-\eta\right)m_{\infty} - \frac{q^{2}}{4\varepsilon_{k}^{3}}\int_{M}u_{k}^{2}\phi_{u_k}d\mu_{g}.
\]
Observe that
\[
\frac{1}{\varepsilon_{k}^{3}}\int_{M}u_{k}^{2}\phi_{u_k}d\mu_{g}
\leq
\frac{C}{\varepsilon_{k}^{3}} \| \phi_{u_k}\|_{H^2}^2
\leq
\frac{C}{\varepsilon_{k}^{3}} |u_k|_{2}^4
= C \varepsilon_{k}^{3} |u_k|_{2,\varepsilon_{k}}^4
\leq C \varepsilon_{k}^{3}.
\]
Thus, by \eqref{rem:limsup}, 
$$\lim_{k}m_{\varepsilon_{k}}= m_{\infty}.$$
Hence, when $\varepsilon,\delta$ are small enough, ${\mathcal N}_{\varepsilon}\cap J_{\varepsilon}^{m_{\infty}+\delta}\subset{\mathcal N}_{\varepsilon}\cap J_{\varepsilon}^{m_{\varepsilon}+2\delta}$
and the general claim follows from the first part of the proof.
\end{proof}

We remind that we are assuming $M$ to be smoothly embedded in some $\mathbb{R}^N$, for $N$ sufficiently large. 
At this point, if $u\in H^1(M)$, with $u^+\neq 0$, it is possible to define its barycenter as follows
\[
\beta(u):=\frac{1}{|u^+|_p^p}\int_M x [u^+(x)]^p d\mu_g\in \mathbb{R}^N.
\]
By the barycenter map, it is possible to associate to any low energy function a unique point which lies in $M_{r(M)}$, a neighborhood of the manifold. This is the final step to link the manifold to the set of low energy functions.
\begin{proposition}\label{centrodimassa}
There exists $\delta_{0}\in(0,m_{\infty})$ such that for any $\delta\in(0,\delta_{0})$, $\varepsilon\in(0,\varepsilon(\delta_{0}))$, and $u\in{\mathcal N}_{\varepsilon}\cap J_{\varepsilon}^{m_{\infty}+\delta}$,  $\beta(u)\in M_{r(M)}$. Moreover the composition $\beta\circ\Psi_{\varepsilon}:M\rightarrow M_{r(M)}$ is s homotopic to the immersion $i:M\rightarrow M_{r(M)}$.
\end{proposition}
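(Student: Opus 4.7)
The plan is to split the proposition into its two assertions and handle them in order.

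For the first assertion ($\beta(u)\in M_{r(M)}$) I would invoke Proposition \ref{prop:baricentro} with a small $\eta\in(0,1)$ to produce, for every $u\in\mathcal{N}_\varepsilon\cap J_\varepsilon^{m_\infty+\delta}$, a concentration point $q=q(u)\in M$ around which a $(1-\eta)$-fraction of the renormalized $L^p$-mass of $u^+$ is concentrated. To turn this into a bound for $\beta(u)$, I need to control the \emph{total} $L^p$-mass of $u^+$. Subtracting $\frac{1}{2}N_\varepsilon(u)=0$ from $J_\varepsilon(u)$ on $\mathcal{N}_\varepsilon$ gives the identity
\[
\Big(\tfrac{1}{2}-\tfrac{1}{p}\Big)|u^+|_{p,\varepsilon}^p
=J_\varepsilon(u)+\frac{1}{4\varepsilon^3}\int_M \phi_u u^2\,d\mu_g,
\]
and the last integral is $O(\varepsilon^3)$ by Lemma \ref{lem:BP}\eqref{aLem:BP} exactly as in the proof of Proposition \ref{prop:baricentro}. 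Hence $|u^+|_{p,\varepsilon}^p\le \frac{2p}{p-2}(m_\infty+\delta)+o_\varepsilon(1)$, which combined with Proposition \ref{prop:baricentro} implies that the fraction of $L^p$-mass of $u^+$ outside $B_g(q,r(M)/2)$ is at most $\eta + O(\delta)+o_\varepsilon(1)$.

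I would then split the barycenter as
\[
\beta(u)-q=\frac{1}{|u^+|_p^p}\int_M (x-q)[u^+(x)]^p\,d\mu_g,
\]
bounding $|x-q|_{\mathbb{R}^N}$ by the geodesic distance $\le r(M)/2$ on $B_g(q,r(M)/2)$ and by $\operatorname{diam}(M)$ on the complement. Choosing $\eta<\tfrac{r(M)}{4\operatorname{diam}(M)}$, then $\delta_0$ and $\varepsilon_0(\delta_0)$ small accordingly, produces $|\beta(u)-q|_{\mathbb{R}^N}<r(M)$, i.e. $\beta(u)\in M_{r(M)}$.

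For the homotopy claim I would first observe that $\beta(\Psi_\varepsilon(\xi))=\beta(W_{\xi,\varepsilon})$, since the positive scalar $t_{W_{\xi,\varepsilon}}^p$ cancels between numerator and denominator. Passing to normal coordinates $y=\exp_\xi^{-1}(x)$ and scaling $y=\varepsilon z$, one gets
\[
\beta(W_{\xi,\varepsilon})
=\frac{\int_{B(0,r/\varepsilon)} \exp_\xi(\varepsilon z)\,[U(z)\chi_r(\varepsilon|z|)]^p\,|g_\xi(\varepsilon z)|^{1/2}\,dz}
{\int_{B(0,r/\varepsilon)} [U(z)\chi_r(\varepsilon|z|)]^p\,|g_\xi(\varepsilon z)|^{1/2}\,dz}.
\]
Exponential decay of $U$, the expansions \eqref{asymptg}, and the smooth dependence of $\exp_\xi$ on $\xi$ (together with compactness of $M$) let me apply dominated convergence uniformly in $\xi$, yielding $\beta(\Psi_\varepsilon(\xi))\to\xi$ in $\mathbb{R}^N$ uniformly on $M$. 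In particular, for $\varepsilon$ small enough, $|\beta(\Psi_\varepsilon(\xi))-\xi|_{\mathbb{R}^N}<r(M)$ for all $\xi\in M$, so the straight-line map
\[
H(t,\xi):=(1-t)\,\beta(\Psi_\varepsilon(\xi))+t\,\xi,\qquad (t,\xi)\in[0,1]\times M,
\]
takes values in $M_{r(M)}$ (since $|H(t,\xi)-\xi|\le|\beta(\Psi_\varepsilon(\xi))-\xi|<r(M)$) and realizes a homotopy between $\beta\circ\Psi_\varepsilon$ and $i$.

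The main obstacle I expect is the quantitative book-keeping in the first claim: one has to tie the free parameter $\eta$ from Proposition \ref{prop:baricentro} to the energy excess $\delta$ and to $\varepsilon$ so that the ``escaped'' fraction of mass, once multiplied by $\operatorname{diam}(M)$, together with the ``inside'' contribution of at most $r(M)/2$, remains strictly below $r(M)$ uniformly in $u$; the choice has to be made \emph{before} fixing $\delta_0$ and $\varepsilon_0(\delta_0)$. The homotopy step is then essentially a uniform smooth-limit argument followed by a convex combination in ambient $\mathbb{R}^N$.
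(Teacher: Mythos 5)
Your proposal is correct and follows essentially the same route as the paper: Proposition \ref{prop:baricentro} gives the concentration point, the Nehari identity together with Lemma \ref{lem:BP}\eqref{aLem:BP} bounds $|u^+|_{p,\varepsilon}^p$, and splitting the barycenter integral over $B_g(q,r(M)/2)$ and its complement yields $|\beta(u)-q|<r(M)$ after fixing $\eta$ and $\delta_0$ small. For the homotopy claim the paper simply cites \cite[Proposition 5.11]{BBM}; your scaling argument showing $\beta(\Psi_\varepsilon(\xi))\to\xi$ uniformly and the straight-line homotopy inside $M_{r(M)}$ is exactly the standard content of that reference.
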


\begin{proof}
By Proposition \ref{prop:baricentro}, given $\eta\in(0,1)$ and $\varepsilon,\delta$ small enough, if $u\in{\mathcal N}_{\varepsilon}\cap J_{\varepsilon}^{m_{\infty}+\delta}$, there exists $q=q(u)\in M$ such that
\[
\frac{1}{\varepsilon^{3}}\int_{B(q,r(M)/2)}(u^{+})^{p} d\mu_g
>\left(1-\eta\right)\frac{2p}{p-2}m_{\infty}.
\]
Moreover
\begin{equation}\label{sublevmeno}
m_{\infty}+\delta
\geq
J_{\varepsilon}(u)
=\frac{p-2}{2p} |u^{+}|_{p,\varepsilon}^{p}
-\frac{q^{2}}{4\varepsilon^{3}}\int_{M}u^{2}\phi_u d\mu_{g}
\end{equation}
and
\begin{equation}\label{bddu}
m_{\infty}+\delta
\geq
J_{\varepsilon} (u) - \frac{1}{p} J_{\varepsilon}'(u)[u]
\geq
\Big(\frac{1}{2}-\frac{1}{p}\Big) \|u\|_{\varepsilon}^2.
\end{equation}
By (\ref{aLem:BP}) in Lemma \ref{lem:BP} and \eqref{bddu}
\[
\frac{1}{\varepsilon^{3}}\int_{M}\phi_u u^{2} d\mu_g
\leq
\frac{C}{\varepsilon^{3}}|u|_{2}^{4}
\le C\varepsilon^{3}\|u\|_{\varepsilon}^{4}\le C\varepsilon^{3}.
\]
Thus, if $\varepsilon(\delta_{0})$ is small enough, by \eqref{sublevmeno},
\[
\frac{p-2}{2p} |u^{+}|_{p,\varepsilon}^{p}
\leq m_{\infty}+2\delta_{0}
\]
and so
\[
\frac{1}{\varepsilon^{3}|u^{+}|_{p,\varepsilon}^{p}} \int_{B_g(q,r(M)/2)}(u^{+})^{p} d\mu_g
>\frac{(1-\eta)m_\infty}{m_\infty+2\delta_{0}}.
\]
Hence
\begin{align*}
|\beta(u)-q|
& \leq
\frac{1}{\varepsilon^3 |u^{+}|_{p,\varepsilon}^{p}}
\left|\int_{M}(x-q)(u^{+})^{p} d\mu_g\right|\\
& \le
\frac{1}{\varepsilon^3 |u^{+}|_{p,\varepsilon}^{p}}
\left(
\left|\int_{B_g(q,r(M)/2)}(x-q)(u^{+})^{p} d\mu_g\right|
+ \left|\int_{M\setminus B_g(q,r(M)/2)}(x-q)(u^{+})^{p} d\mu_g\right|
\right)
\\
& \le \frac{r(M)}{2}
+\operatorname{diam}(M)\left(1-\frac{(1-\eta)m_{\infty}}{m_{\infty}+2\delta_{0}}\right)
<r(M),
\end{align*}
for $\delta_{0}$ and $\eta$ small. Here $\operatorname{diam}(M)$ denotes the diameter of the manifold $M$ as subset of $\mathbb{R}^{N}$.\\
The second part is standard (see for instance \cite[Proposition 5.11]{BBM}).
\end{proof}

To conclude the proof of the first claim of Theorem \ref{th:main}, we recall a classical result in topological methods.
\begin{theorem}\label{prelim}
	Let $J$ be a $C^{1,1}$ real functional on a complete $C^{1,1}$
	manifold $\mathcal{N}$. If $J$ is bounded from below and satisfies
	the Palais Smale condition then has at least $\operatorname{cat}(J^{d})$ critical
	point in $J^{d}$ where $J^{d}=\{u\in\mathcal{N}\ :\ J(u)\leq d\}$. Moreover
	if $\mathcal{N}$ is contractible and $\operatorname{cat} J^{d}>1$, there exists
	at least one critical point $u\not\in J^{d}$
\end{theorem}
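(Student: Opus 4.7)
The plan is the standard Lusternik--Schnirelmann min-max scheme. For each integer $k$ with $1\le k\le \operatorname{cat}(J^d)$, I would define the family $\mathcal{A}_k := \{A\subset J^d : A \text{ closed},\ \operatorname{cat}_{J^d}(A)\ge k\}$ and set
$c_k := \inf_{A\in\mathcal{A}_k}\sup_{u\in A} J(u)$.
Since $J^d$ itself lies in $\mathcal{A}_k$ whenever $k\le \operatorname{cat}(J^d)$, one has $c_k\le d$; since $J$ is bounded below, $c_k>-\infty$; and from $\mathcal{A}_{k+1}\subset \mathcal{A}_k$ the sequence is monotone, $c_1\le c_2\le\ldots\le c_{\operatorname{cat}(J^d)}\le d$. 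The strategy is then to show that every $c_k$ is a critical value of $J$ and to promote coincidences to category estimates on the critical set.

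The key technical ingredient is a deformation lemma on the $C^{1,1}$ manifold $\mathcal{N}$: using a Palais-type pseudo-gradient vector field (available because $J\in C^{1,1}$, via a local-Lipschitz gluing with a partition of unity) together with the Palais--Smale condition, one constructs, for any regular value $c$, an $\varepsilon>0$ and a continuous map $\eta\colon J^{c+\varepsilon}\to J^{c-\varepsilon}$ homotopic to the identity in $\mathcal{N}$. If some $c_k$ failed to be critical, I would pick $A\in\mathcal{A}_k$ with $\sup_A J<c_k+\varepsilon$; since $c_k\le d$ one checks $\eta(A)\subset J^d$, while homotopy invariance of the category gives $\operatorname{cat}_{J^d}(\eta(A))\ge \operatorname{cat}_{J^d}(A)\ge k$, and $\sup_{\eta(A)} J\le c_k-\varepsilon$, contradicting the definition of $c_k$. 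To handle coinciding values $c_k=\ldots=c_{k+j-1}=c$, I would apply the quantitative version of the deformation lemma with a cut-off around the critical set $K_c$ to deduce $\operatorname{cat}_{J^d}(K_c\cap J^d)\ge j$, so in particular $K_c$ contains at least $j$ distinct critical points. Summing over all distinct values yields at least $\operatorname{cat}(J^d)$ critical points in $J^d$.

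For the second assertion, I would argue by contradiction: assume $\mathcal{N}$ is contractible, $\operatorname{cat}(J^d)>1$, and that no critical point lies outside $J^d$. Then every $c>d$ is a regular value, and, since $J$ is bounded from below and PS holds, the negative pseudo-gradient flow is globally defined and each trajectory enters $J^d$ in finite time (otherwise, by PS, it would accumulate on a critical point outside $J^d$). This produces a strong deformation retraction of $\mathcal{N}$ onto $J^d$, so $J^d$ is homotopy equivalent to the contractible $\mathcal{N}$, forcing $\operatorname{cat}(J^d)=1$ and contradicting the hypothesis.

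The main obstacle is really the deformation lemma itself on an abstract $C^{1,1}$ manifold: one has to construct a locally Lipschitz pseudo-gradient vector field, check that the associated flow is well defined on sublevel sets, and verify a uniform quantitative decrease of $J$ outside a neighbourhood of $K_c$; the rest of the argument is bookkeeping with category. Since these constructions are classical (see, e.g., \cite{deFig}), the proof reduces to invoking them carefully at each minimax level.
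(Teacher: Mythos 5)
The paper does not actually prove this theorem; it only cites \cite[Theorem 9.10]{AM} for the first claim and \cite[Deformation Lemma 7.11]{AM} for the extra critical point, remarking that ``a change of topology generates a (PS) sequence.'' Your write-up is a correct rendering of the classical Lusternik--Schnirelmann minimax argument behind those citations, so the approach matches the paper's.

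One small imprecision in your second part: you assert that a trajectory $u(t)$ of the negative pseudo-gradient flow which never enters $J^d$ must, by (PS), accumulate on a critical point \emph{outside} $J^d$. In fact $J(u(t))$ decreases to some $c^*\geq d$ and the induced (PS) sequence converges to a critical point at level $c^*$; when $c^*=d$ that critical point lies \emph{inside} $J^d$, so this yields neither a contradiction nor finite-time entry into $J^d$. The standard repair is a time reparametrization or the version of the deformation lemma that tolerates critical points at the lower endpoint of the interval (essentially what \cite[Deformation Lemma 7.11]{AM} supplies); with it one still obtains that $J^d$ is a deformation retract of $\mathcal{N}$, and your category contradiction goes through unchanged.
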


The proof of this theorem combines \cite[Theorem 9.10]{AM}, for the first part, and a consequence of \cite[Deformation Lemma 7.11]{AM} for the additional solution when $\operatorname{cat} J^{d}>1$, due to the fact that a change of topology generates a (PS) sequence.

To complete the proof of the multiplicity result, it is sufficient to apply the following
\begin{remark}\label{rem:cat}
	Let $X_{1}$ and $X_{2}$ be topological spaces. If $g_{1}:X_{1}\rightarrow X_{2}$
	and $g_{2}:X_{2}\rightarrow X_{1}$ are continuous operators such
	that $g_{2}\circ g_{1}$ is homotopic to the identity on $X_{1}$,
	then $\operatorname{cat} X_{1}\leq\operatorname{cat} X_{2}$. 
\end{remark}
Indeed, for $g_1=\Psi_{\varepsilon}$ and $g_2=\beta$, we obtain $\operatorname{cat} {\mathcal N}_{\varepsilon}\cap J_{\varepsilon}^{m_{\infty}+\delta}\ge \operatorname{cat} M$, concluding the proof of the first claim of Theorem \ref{th:main}.


\section{Profile description}\label{sec:prof}

In this section we give a qualitative and somehow quantitative description of the solutions found in the previous section.

Let $\delta>0$ and $u_{\varepsilon}$ be a nontrivial (positive) solution of \eqref{eq:BPPnl} such that $J_{\varepsilon}(u_{\varepsilon})=m_\varepsilon\le m_{\infty}+\delta<2m_{\infty}$.\\
First we observe that, since $\{\|u_\varepsilon\|_\varepsilon \}$ is bounded, then, by Lemma \ref{lem:BP},
\[
\|\phi_{u_\varepsilon}\|_{C^2(M)} \leq C | u_\varepsilon |_4^2 \leq C \varepsilon^{3/2} \to 0
\quad
\text{as }\varepsilon\to 0,
\]
proving the claim on $\phi_{u_\varepsilon}$ in Theorem \ref{th:main}.\\
Moreover, by standard regularity theory we can prove that $u_{\varepsilon}\in C^{2}(M)$, and so, by the compactness of $M$, it admits at least one maximum point.\\
To complete the proof of the second part of Theorem \ref{th:main}, it remains to show that, for $\varepsilon$ sufficiently small, such a maximum point is unique and to provide a description of the profile of $u_{\varepsilon}$.

Let us start with some preliminary results.

\begin{lemma} \label{lem:maxval}
Let $P\in M$ be a maximum point of $u_\varepsilon$. Then  $u_{\varepsilon}(P)\geq 1$.\footnote{Without the {\em normalization} of the constants assumed at the beginning of Section \ref{sec:prelim}, we get $u_{\varepsilon}(P)\geq \omega^{\frac1{p-2}}$.}
\end{lemma}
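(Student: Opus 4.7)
The plan is a straightforward maximum principle argument applied to the pointwise form of equation \eqref{eq:BPPnl} at the maximum point.

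First I would note that, by the regularity theory already invoked in the previous paragraph, $u_\varepsilon\in C^2(M)$ so the equation \eqref{eq:BPPnl} holds pointwise everywhere on $M$; in particular at the maximum point $P$. Then, since $P$ is an interior maximum of $u_\varepsilon$ on the closed manifold $M$, the Laplace--Beltrami operator satisfies $-\Delta_g u_\varepsilon(P)\geq 0$. Evaluating \eqref{eq:BPPnl} at $P$ and dropping the nonnegative term $-\varepsilon^2\Delta_g u_\varepsilon(P)$ gives
\[
u_\varepsilon(P)+\phi_{u_\varepsilon}(P)\,u_\varepsilon(P)\leq u_\varepsilon(P)^{p-1}.
\]

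Next I would use the sign of $\phi_{u_\varepsilon}$. Since the standing assumption in Theorem \ref{th:main} is $2am<1$ and we have normalized $m=1$, we have $a<1/2$, so part \eqref{bLem:BP} of Lemma \ref{lem:BP} gives $\phi_{u_\varepsilon}(P)\geq 0$. Hence
\[
u_\varepsilon(P)\leq u_\varepsilon(P)^{p-1}.
\]
Because $u_\varepsilon$ is a nontrivial positive solution, $u_\varepsilon(P)=\max_M u_\varepsilon>0$, so dividing by $u_\varepsilon(P)$ yields $1\leq u_\varepsilon(P)^{p-2}$, and therefore $u_\varepsilon(P)\geq 1$, as claimed.

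There is essentially no obstacle: the only ingredient beyond the interior maximum test is the nonnegativity of the auxiliary field $\phi_{u_\varepsilon}$, which is exactly what part \eqref{bLem:BP} of Lemma \ref{lem:BP} provides under the standing hypothesis. The footnote in the statement (concerning the normalization) is consistent with this argument: in the non-normalized problem the inequality becomes $\omega\, u_\varepsilon(P)\leq u_\varepsilon(P)^{p-1}$, giving $u_\varepsilon(P)\geq\omega^{1/(p-2)}$.
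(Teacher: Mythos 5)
Your proof is correct and follows essentially the same route as the paper: evaluate equation \eqref{eq:BPPnl} at the maximum point $P$, use $\Delta_g u_\varepsilon(P)\le 0$ and the nonnegativity of $\phi_{u_\varepsilon}$ from part (\ref{bLem:BP}) of Lemma \ref{lem:BP} to obtain $u_\varepsilon^{p-1}(P)\ge u_\varepsilon(P)$, and conclude. You are in fact slightly more careful than the paper, since you explicitly note that $u_\varepsilon(P)>0$ before dividing.
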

\begin{proof}
Since $P$ is a maximum point, $\Delta_{g}u_{\varepsilon}(P)\le0$. In addition, by Lemma \ref{lem:BP}, item (\ref{bLem:BP}), 
$$
u_\varepsilon^{p-1}(P)
=-\varepsilon^2\Delta_{g}u_{\varepsilon}(P)
+ u_{\varepsilon}(P) 
+ \phi_\varepsilon(P) u_{\varepsilon}(P)
\geq
u_{\varepsilon}(P),
$$
that concludes the proof.
\end{proof}

\begin{lemma}\label{lem:2max}
Suppose that there exist two maximum points $P_{\varepsilon}^{1},P_{\varepsilon}^{2}$ for $u_{\varepsilon}$ on $M$. Then $d_{g}(P_{\varepsilon}^{1},P_{\varepsilon}^{2})\rightarrow0$  as $\varepsilon\rightarrow0$.
\end{lemma}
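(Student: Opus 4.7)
The plan is to argue by contradiction. Suppose that, along a sequence $\varepsilon_n\to 0$, one has $d_g(P_{\varepsilon_n}^1, P_{\varepsilon_n}^2)\ge d_0>0$, and consider, in normal coordinates around each maximum point, the rescaled functions
$$w_n^i(z):=u_{\varepsilon_n}(\exp_{P_{\varepsilon_n}^i}(\varepsilon_n z)),\qquad i=1,2,\quad z\in B(0,r/\varepsilon_n),$$
with $r>0$ smaller than $d_0/2$ and the injectivity radius. Since $J_{\varepsilon_n}(u_{\varepsilon_n})=m_{\varepsilon_n}\le m_\infty+\delta$ and $\phi_{u_{\varepsilon_n}}\ge 0$, the argument used in item (\ref{mepsilon}) of Lemma \ref{lemNehari} yields a uniform bound on $\|u_{\varepsilon_n}\|_{\varepsilon_n}$, which via the change of variables becomes a uniform $H^1(B(0,R))$ bound on $\{w_n^i\}$ for every fixed $R>0$.

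In the rescaled variable, the equation solved by $u_{\varepsilon_n}$ reads, up to metric coefficients that tend to the Euclidean ones by \eqref{asymptg},
$$-\Delta w_n^i + w_n^i + \phi_{u_{\varepsilon_n}}\bigl(\exp_{P_{\varepsilon_n}^i}(\varepsilon_n\,\cdot)\bigr)\,w_n^i = (w_n^i)^{p-1},$$
whose zero-order coefficient tends uniformly to zero thanks to the preliminary estimate $\|\phi_{u_{\varepsilon_n}}\|_{C^2(M)}\to 0$ derived at the beginning of this section from Lemma \ref{lem:BP}. Moser iteration followed by Schauder estimates then upgrades the $H^1$ bound to a uniform $C^{1,\alpha}$ bound on every compact subset, so that, up to a subsequence, $w_n^i\to w^i$ in $C^1_{\mathrm{loc}}(\mathbb{R}^3)$, with $w^i\ge 0$ a weak solution of the limit problem \eqref{eq:problim}. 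Moreover, by Lemma \ref{lem:maxval}, $w_n^i(0)=u_{\varepsilon_n}(P_{\varepsilon_n}^i)\ge 1$, and local uniform convergence gives $w^i(0)\ge 1$; hence $w^i$ is a nontrivial nonnegative solution and, by the uniqueness recalled after \eqref{eq:problim}, coincides with $U$ (up to a translation). In particular $w^i\in\mathcal{N}_\infty$ and $\|w^i\|_{H^1}^2\ge\|U\|_{H^1}^2=\tfrac{2p}{p-2}m_\infty$.

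Since the geodesic balls $B_g(P_{\varepsilon_n}^i,d_0/2)$, $i=1,2$, are disjoint for $n$ large, Fatou's lemma applied to each rescaled integral, together with the metric expansion \eqref{asymptg}, yields
$$\liminf_{n\to\infty} \|u_{\varepsilon_n}\|_{\varepsilon_n}^2 \ \ge\ \|w^1\|_{H^1}^2 + \|w^2\|_{H^1}^2 \ \ge\ \frac{4p}{p-2}\,m_\infty.$$
Using $\phi_{u_\varepsilon}\ge 0$ once again, $m_{\varepsilon_n}=J_{\varepsilon_n}(u_{\varepsilon_n})\ge\bigl(\tfrac12-\tfrac1p\bigr)\|u_{\varepsilon_n}\|_{\varepsilon_n}^2$, and therefore $\liminf_n m_{\varepsilon_n}\ge 2m_\infty$, contradicting $m_{\varepsilon_n}\le m_\infty+\delta<2m_\infty$ (which holds since $\delta<m_\infty$). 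The main obstacle is to transfer the pointwise lower bound at the two maxima to the weak limits: this requires upgrading the $H^1$ bound for $\{w_n^i\}$ to a uniform $C^1_{\mathrm{loc}}$ bound via elliptic regularity, and then justifying the energy splitting across two disjoint geodesic balls through Fatou combined with the expansion \eqref{asymptg} of the metric.
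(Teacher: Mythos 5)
Your proof is correct and follows the same contradiction strategy as the paper's: blow up around each maximum, identify the limit profile as the ground state $U$ of \eqref{eq:problim} using Lemma \ref{lem:maxval} to rule out the trivial limit, and then derive a contradiction from the fact that two disjoint bumps each carry at least the energy of $U$. The execution differs in a few technical points, all benign. You rescale directly around the moving maxima $P_{\varepsilon_n}^i$, whereas the paper rescales around the limit points $P^i$ and tracks the shift $Q_{\varepsilon_j}^i=\exp_{P^i}^{-1}(P_{\varepsilon_j}^i)$; your choice is slightly cleaner. You dispose of the coupling term in the rescaled equation by invoking the already-established uniform estimate $\|\phi_{u_\varepsilon}\|_{C^2(M)}\le C\varepsilon^{3/2}$, rather than re-running the integral estimate of \eqref{convmisto} as the paper does. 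Finally, you obtain the energy contradiction via Fatou on all of $\mathbb{R}^3$, giving $\liminf_n\|u_{\varepsilon_n}\|_{\varepsilon_n}^2\ge 2\|U\|_{H^1}^2=\tfrac{4p}{p-2}m_\infty$ and hence $\liminf_n J_{\varepsilon_n}(u_{\varepsilon_n})\ge 2m_\infty$, while the paper fixes a large $\bar R$ with $\int_{B(0,\bar R)}(|\nabla U|^2+U^2)>\tfrac{p}{p-2}(m_\infty+\delta)$ and works with the two microscopic balls $B_g(\cdot,\varepsilon_j\bar R)$; both yield the same contradiction with $J_\varepsilon(u_\varepsilon)\le m_\infty+\delta<2m_\infty$. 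One small inaccuracy: the uniform bound on $\|u_{\varepsilon_n}\|_{\varepsilon_n}$ does not come from item (\ref{mepsilon}) of Lemma \ref{lemNehari} (that item gives the lower bound $m_\varepsilon\ge C$); it comes from the identity $J_\varepsilon(u)-\tfrac1pJ_\varepsilon'(u)[u]=\bigl(\tfrac12-\tfrac1p\bigr)\|u\|_\varepsilon^2+\bigl(\tfrac14-\tfrac1p\bigr)\varepsilon^{-3}\int_M\phi_u u^2$ together with $\phi_u\ge0$, which is what the paper uses in \eqref{stimavj}.
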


\begin{proof}
By contradiction, let $\left\{ \varepsilon_{j}\right\}$ be a vanishing sequence such that $P_{\varepsilon_{j}}^{1}\rightarrow P^{1}\in M$,  $P_{\varepsilon_{j}}^{2}\rightarrow P^{2}\in M$, and $P^{1}\neq P^{2}$.\\
Let now
\[
Q_{\varepsilon_{j}}^{i}:=\exp_{P^{i}}^{-1}(P_{\varepsilon_{j}}^{i})\in B(0,r), \ i=1,2,
\]
and
\[
v_j^{1}(z):=u_{\varepsilon_{j}}\big(\exp_{P^{1}}(Q_{\varepsilon_{j}}^{1}+\varepsilon_{j}z)\big)
\text{ for }
|Q_{\varepsilon_{j}}^{1}+\varepsilon_{j}z|<r,
\]
$r$ being the injectivity radius of $M$.\\
Observe that, since, by definition, $Q_{\varepsilon_{j}}^{1}\rightarrow0$ as $\varepsilon_{j}\rightarrow0$,
\begin{equation}\label{palle}
B\Big(0,\frac{r}{2\varepsilon_{j}}\Big)
\subset B\left(-\frac{Q_{\varepsilon_{j}}^{1}}{\varepsilon_{j}},\frac{r}{\varepsilon_{j}}\right)
\text{ for } j \text{ large enough.}
\end{equation}
By Lemma \ref{lem:maxval} we have that
\begin{equation}\label{tildevj10}
v_j^{1}(0)=u_{\varepsilon_{j}}(P_{\varepsilon_{j}}^{1})\geq 1
\end{equation}
and, by \eqref{asymptg},
\begin{align*}
\int_{B(-Q_{\varepsilon_{j}}^{1}/\varepsilon_{j},r/\varepsilon_{j})} |\nabla v_{j}^{1}|^2 dz
&=
\frac{1}{\varepsilon_{j}}\int_{B(0,r)} | \nabla u_{\varepsilon_{j}}\big(\exp_{P^{1}}(y)\big)|^2 dy\\
&\leq
\frac{C}{\varepsilon_{j}}\int_{B(0,r)} g^{hl}(y) \partial_h u_{\varepsilon_{j}}\big(\exp_{P^{1}}(y)\big) \partial_l  u_{\varepsilon_{j}}\big(\exp_{P^{1}}(y)\big) |g_{P^1}(y)|^{1/2} dy\\
&=
\frac{C}{\varepsilon_{j}}\int_{B(P^1,r)} |\nabla_{g} u_{\varepsilon_{j}}|^2 d\mu_g
\leq
\frac{C}{\varepsilon_{j}}\int_{M} | \nabla_{g} u_{\varepsilon_{j}}|^2 d\mu_g
\end{align*}
and, analogously,
\begin{align*}
\int_{B(-Q_{\varepsilon_{j}}^{1}/\varepsilon_{j},r/\varepsilon_{j})} | v_{j}^{1}|^2 dz
&\leq
\frac{C}{\varepsilon_{j}^3}\int_{M} | u_{\varepsilon_{j}}|^2 d\mu_g.
\end{align*}
Thus, since $u_{\varepsilon_j}\in \mathcal{N}_{\varepsilon_j}$, by \eqref{rem:limsup}, we have
\begin{equation}\label{stimavj}
\begin{split}
C&\left(\int_{B(-Q_{\varepsilon_{j}}^{1}/\varepsilon_{j},r/\varepsilon_{j})} |\nabla v_{j}^{1}|^2 dy
+ \int_{B(-Q_{\varepsilon_{j}}^{1}/\varepsilon_{j},r/\varepsilon_{j})} | v_{j}^{1}|^2 dy\right)\\
&\le
\|u_{\varepsilon_{j}}\|_{\varepsilon_{j}}^{2}
=
\frac{2p}{p-2}J_{\varepsilon_{j}}(u_{\varepsilon_{j}}) - \frac{p-4}{2\varepsilon_{j}^3(p-2)} \int_M \phi_{u_{\varepsilon_j}} u_{\varepsilon_{j}}^2 d\mu_g
\leq
\frac{2p}{p-2} m_{\varepsilon_{j}}
\le
\frac{4p}{p-2} m_{\infty}.
\end{split}
\end{equation}
Now, let
\[
\bar{v}_{j}^{1}
:=
v_{j}^{1} \chi_{r}(|Q_{\varepsilon_{j}}^{1}+\varepsilon_{j}\cdot|).
\]
We have that $\bar{v}_j^1 \in H^{1}(\mathbb{R}^{3})$ and, since
\begin{align*}
|\nabla \bar{v}_{j}^{1}|_2^2
&=
\int_{\mathbb{R}^3} [\chi_{r}(|Q_{\varepsilon_{j}}^{1}+\varepsilon_{j}z|)]^2 |\nabla v_{j}^{1} (z) |^2 dz
+
\int_{\mathbb{R}^3} [v_{j}^{1} (z)]^2 |\nabla  \chi_{r}(|Q_{\varepsilon_{j}}^{1}+\varepsilon_{j}z|)|^2 dz\\
&\qquad
+
2\int_{\mathbb{R}^3} \chi_{r}(|Q_{\varepsilon_{j}}^{1}+\varepsilon_{j}z|) v_{j}^{1} (z)
\nabla v_{j}^{1} (z) \cdot \nabla \chi_{r}(|Q_{\varepsilon_{j}}^{1}+\varepsilon_{j}z|)dz\\
&\leq
\int_{B(-Q_{\varepsilon_{j}}^{1}/\varepsilon_{j},r/\varepsilon_{j})}  |\nabla v_{j}^{1} |^2 dz
+
\frac{4\varepsilon_{j}^2}{r^2} \int_{B(-Q_{\varepsilon_{j}}^{1}/\varepsilon_{j},r/\varepsilon_{j})} |v_{j}^{1} |^2  dz\\
&\qquad
+
\frac{4\varepsilon_{j}}{r}
\left(\int_{B(-Q_{\varepsilon_{j}}^{1}/\varepsilon_{j},r/\varepsilon_{j})}  |v_{j}^{1}|^2 dz\right)^{1/2}
\left(\int_{B(-Q_{\varepsilon_{j}}^{1}/\varepsilon_{j},r/\varepsilon_{j})}  |\nabla v_{j}^{1}|^2 dz\right)^{1/2}
\end{align*}
and
\[
|\bar{v}_{j}^{1}|_2^2
\leq
\int_{B(-Q_{\varepsilon_{j}}^{1}/\varepsilon_{j},r/\varepsilon_{j})} |v_{j}^{1}|^2 dz,
\]
by \eqref{stimavj}, the sequence $\{\bar{v}_{j}^{1}\}$ is bounded in $H^1(\mathbb{R}^3)$.\\
Thus, there exists $\bar{v}^{1}\in H^{1}(\mathbb{R}^{3})$ such that, up to a subsequence, $\bar{v}_{j}^{1}$ converges to $\bar{v}^{1} $  weakly in $H^{1}(\mathbb{R}^{3})$
and strongly in $L_{\text{loc}}^{t}(\mathbb{R}^{3})$ for $t\in[1,6)$.\\
Let now $\varphi\in C_{0}^{\infty}(\mathbb{R}^{3})$. Arguing as in \eqref{palle}, for $\varepsilon_{j}$ small we have that
$$
\operatorname{spt}(\varphi)
\subset B\left(0,\frac{r}{4\varepsilon_{j}}\right)
\subset B\left(-\frac{Q_{\varepsilon_{j}}^{1}}{\varepsilon_{j}},\frac{r}{2\varepsilon_{j}}\right)
$$
and so $\bar{v}_j^1=v_j^1$ on $\operatorname{spt}(\varphi)$.\\
Proceeding as in the proof of Proposition \ref{prop:baricentro}, if we define
\[
\varphi_j:M\to\mathbb{R},
\qquad
\varphi_{j}(x):=\varphi\left(\frac{\exp_{P^1}^{-1}(x)- Q_{\varepsilon_{j}}^1}{\varepsilon_{j}}\right)
\]
so that, for $j$ large $\operatorname{spt}(\varphi_j)\subset B_g(P^1,r)$, we have
\begin{align*}
\int_M \nabla_{g} u_{\varepsilon_j}\nabla_g \varphi_{j} d\mu_g
&=
\int_{B_g(P^1,r)} \nabla_{g} u_{\varepsilon_j}\nabla_g \varphi_{j} d\mu_g
=
\int_{B(0,r)} g_{P^1}^{il}(y) \partial_i \tilde{u}_{\varepsilon_j}(y)\partial_l \tilde{\varphi}_{j}(y) |g_{P^1}(y)|^{1/2}  d y
\\
&=
\varepsilon_j \int_{B(-Q_{\varepsilon_{j}}^1/\varepsilon_{j},r/\varepsilon_{j})} g_{P^1}^{il}(Q_{\varepsilon_{j}}^1+\varepsilon_j z) \partial_i v_j^1(z) \partial_l \varphi (z) |g_{P^1}(Q_{\varepsilon_{j}}^1+\varepsilon_j z)|^{1/2}  dz\\
&=
\varepsilon_j \int_{\operatorname{spt}(\varphi)} g_{P^1}^{il}(Q_{\varepsilon_{j}}^1+\varepsilon_j z) \partial_i \bar{v}_j^1(z) \partial_l \varphi (z) |g_{P^1}(Q_{\varepsilon_{j}}^1+\varepsilon_j z)|^{1/2}  dz
\end{align*}
and, arguing as in Proposition \ref{prop:baricentro} (proof of \eqref{convmisto}) and using again the boundedness of $\{u_{\varepsilon_{j}}\}$ in $H^1(M)$,
\[
\frac{1}{\varepsilon_j^3}\int_{M} \phi_{u_{\varepsilon_{j}}} u_{\varepsilon_{j}} \varphi_j d\mu_g
\to 0
\text{ as }j\to +\infty.
\]
Since $u_{\varepsilon_j}$'s are solutions of \eqref{eq:BPPnl}, we obtain
\begin{equation}\label{eq:Q1}
\begin{split}
0
&=
\frac{1}{\varepsilon_{j}^3}
\left[\varepsilon_j^2 \int_M \nabla_{g} u_{\varepsilon_j}\nabla_g \varphi_{j} d\mu_g
+\int_M  u_{\varepsilon_j} \varphi_{j} d\mu_g
+ \int_M \phi_{u_{\varepsilon_{j}}} u_{\varepsilon_j} \varphi_{j} d\mu_g
-  \int_M u_{\varepsilon_j}^{p-1} \varphi_{j} d\mu_g\right]\\
&=
\int_{\operatorname{spt}(\varphi)} g_{P^1}^{il}(Q_{\varepsilon_{j}}^1+\varepsilon_j z) \partial_i \bar{v}_j^1(z) \partial_l \varphi (z)
|g_{P^1}(Q_{\varepsilon_{j}}^1+\varepsilon_j z)|^{1/2}  dz
\\
&\quad
+
\int_{\operatorname{spt}(\varphi)}
\bar{v}_{j}^{1}(z)\varphi(z)|g_{P^{1}}(Q_{\varepsilon_{j}}^{1}+\varepsilon_{j}z)|^{1/2}dz
-\int_{\operatorname{spt}(\varphi)}(\bar{v}_{j}^{1}(z))^{p-1}\varphi(z)|g_{P^{1}}(Q_{\varepsilon_{j}}^{1}+\varepsilon_{j}z)|^{1/2}dz
\\
&\quad
+o_j(1).
\end{split}
\end{equation}
Thus, passing to the limit as $j\to+\infty$ in \eqref{eq:Q1}, by \eqref{asymptg}, we deduce that for all $\varphi\in C_{0}^{\infty}(\mathbb{R}^{3})$
\[
0
=\int_{\operatorname{spt}(\varphi)}\left[\nabla\bar{v}^{1}(z)\nabla\varphi(z)
+\bar{v}^{1}(z)\varphi(z)
-(\bar{v}^{1}(z))^{p-1}\varphi(z)\right]dz,
\]
so that $\bar{v}^{1}$ is a weak solution of
\[
-\Delta\bar{v}^{1}+\bar{v}^{1}=(\bar{v}^{1})^{p-1}\text{ on }\mathbb{R}^{3}.
\]
By a bootstrap argument (see Appendix \ref{bootapp}) we have that, up to a subsequence, $\bar{v}_{j}^{1}\rightarrow\bar{v}^{1}$ in $C_{\text{loc}}^{2}(\mathbb{R}^{3})$. \\
Then, since by \eqref{tildevj10}, $\bar{v}^{1}(0)\ge 1$, $\bar{v}^{1}$ is not the trivial solution zero.
Hence, $\bar{v}^{1}=U$.\\
Of course we can repeat the same argument for $P_{\varepsilon_{j}}^{2}$.\\
Now, let $\bar{R}$ such that
\[
\int_{B(0,\bar{R})}[|\nabla U|^{2}+  U^{2}]dy>\frac{p}{p-2}(m_{\infty}+\delta).
\]
For $\varepsilon_{j}$ sufficiently small, such that $2\varepsilon_{j}\bar{R}\leq d_{g}(P^{1},P^{2})$, we have, since $p>4$,
\begin{equation*}
		J_{\varepsilon_{j}}(u_{\varepsilon_{j}})
		 =
		\frac{p-2}{2p}\|u_{\varepsilon_{j}}\|_{\varepsilon_{j}}^{2}
		+ \frac{p-4}{4p\varepsilon_j^3} \int_M \phi_{u_{\varepsilon_j}} u_{\varepsilon_{j}}^2 d\mu_g
		 \geq
		\frac{p-2}{2p\varepsilon_j^3} \int_{B_{g}(P^{1},\varepsilon_{j}\bar{R})\cup B_{g}(P^{2},\varepsilon_{j}\bar{R})} [\varepsilon_j^{2}|\nabla_{g}u_{\varepsilon_{j}}|^{2}+ u_{\varepsilon_{j}}^{2}] d\mu_g.
\end{equation*}
Then, since, for $h=1,2$,
\begin{align*}
\frac{1}{\varepsilon_j^3} \int_{B_{g}(P^{h},\varepsilon_{j}\bar{R})} [\varepsilon_j^{2}|\nabla_{g}u_{\varepsilon_{j}}|^{2}
+ u_{\varepsilon_{j}}^{2}] d\mu_g
&=
\int_{B(0,\bar{R})} [|\nabla \tilde{u}_{\varepsilon_{j}}|^{2}+ \tilde{u}_{\varepsilon_{j}}^{2}] dz + o_j(1)\\
&=
\int_{B(0,\bar{R})} [|\nabla \bar{v}_{j}^h|^{2}+ (\bar{v}_{j}^h)^{2}] dz + o_j(1)
\to
\int_{B(0,\bar{R})} [|\nabla U|^{2}+ U^{2}] dz
\end{align*}
we get $J_{\varepsilon_{j}}(u_{\varepsilon_{j}})
>m_{\infty}+\delta$ which leads us to a contradiction.
\end{proof}
\begin{lemma}
	\label{lem:unimax}
	If $\varepsilon$ is sufficiently small, $u_{\varepsilon}$
	has a unique maximum point.
\end{lemma}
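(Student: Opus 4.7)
The plan is to argue by contradiction. Suppose along a subsequence $\varepsilon_j\to 0$ the solution $u_{\varepsilon_j}$ has two distinct maximum points $P^1_j\neq P^2_j$. Lemma \ref{lem:2max} already yields $s_j:=d_g(P^1_j,P^2_j)\to 0$, so the decisive quantity is the rescaled separation $|z^2_j|$ where $z^2_j:=\varepsilon_j^{-1}\exp_{P^1_j}^{-1}(P^2_j)$, and I would split according to whether $|z^2_j|=s_j/\varepsilon_j$ stays bounded or diverges.

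In the bounded case, rescaling as in the proof of Lemma \ref{lem:2max} (now centred at $P^1_j$ itself) gives $w_j(z):=u_{\varepsilon_j}(\exp_{P^1_j}(\varepsilon_j z))$ converging, after cut-off and the bootstrap of Appendix \ref{bootapp}, to $U$ in $C^2_{\mathrm{loc}}(\mathbb{R}^3)$. Up to subsequence $z^2_j\to z^*$, and $\nabla w_j(z^2_j)=0$ passes to $\nabla U(z^*)=0$; radial strict monotonicity of $U$ (Gidas--Ni--Nirenberg) forces $z^*=0$. So $z^2_j\to 0$ with $z^2_j\neq 0$, and I rule this out by non-degeneracy of $U$ at the origin: the radial ODE gives $U''(0)=(U(0)-U(0)^{p-1})/3$, and $U(0)>1$ because otherwise $-\Delta U=U^{p-1}-U\leq 0$ would make $U$ a non-trivial non-negative superharmonic $H^1(\mathbb{R}^3)$ function, which is impossible. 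Hence $D^2U(0)=U''(0)I$ is negative definite, so by $C^2_{\mathrm{loc}}$ convergence $D^2w_j$ is negative definite on a fixed ball $B(0,r_0)$ for $j$ large. Strict concavity on $B(0,r_0)$ forbids two distinct critical points there, contradicting $0,z^2_j\in B(0,r_0)$.

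In the unbounded case $s_j/\varepsilon_j\to+\infty$ I rescale around both $P^h_j$, $h=1,2$: each rescaled sequence converges to $U$ and, for any fixed $R>0$, the geodesic balls $B_g(P^h_j,\varepsilon_j R)$ are disjoint for $j$ large. Using the Nehari identity and $\phi_u\geq 0$ from (\ref{bLem:BP}) of Lemma \ref{lem:BP},
\[
J_{\varepsilon_j}(u_{\varepsilon_j})\geq \frac{p-2}{2p\varepsilon_j^3}\sum_{h=1}^{2}\int_{B_g(P^h_j,\varepsilon_j R)}\bigl[\varepsilon_j^2|\nabla_g u_{\varepsilon_j}|^2+u_{\varepsilon_j}^2\bigr]d\mu_g,
\]
and the change of variables together with \eqref{asymptg} makes each summand approach $\frac{p-2}{2p}\int_{B(0,R)}[|\nabla U|^2+U^2]dz$, which tends to $m_\infty$ as $R\to\infty$ by \eqref{minfty}. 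Choosing $R$ large and then $\eta>0$ small gives $J_{\varepsilon_j}(u_{\varepsilon_j})\geq(2-\eta)m_\infty$, contradicting the hypothesis $J_{\varepsilon_j}(u_{\varepsilon_j})=m_{\varepsilon_j}\leq m_\infty+\delta<2m_\infty$ once $\delta$ is sufficiently small.

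The crux is the first case, where without a quantitative non-degeneracy of $U$ at its peak one cannot prevent a second maximum from collapsing onto the first; the three-dimensional input $U(0)>1$, hence $U''(0)<0$, is precisely what closes the argument. Once that is in place, the unbounded case is an energy-counting observation: the low-energy budget $m_\infty+\delta$ cannot accommodate two separated copies of the ground state $U$.
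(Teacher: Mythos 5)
Your proof follows the same two-case strategy as the paper (rescaled separation bounded versus unbounded), and the unbounded case is identical energy counting. For the bounded case the paper simply invokes non-degeneracy of the critical point $0$ of $U$; you spell this out by computing $D^2U(0)=U''(0)I$ with $U''(0)<0$ and then using strict local concavity, which is a valid and slightly more self-contained way to reach the same contradiction. One small slip: since $-\Delta U=U^{p-1}-U\leq 0$ means $\Delta U\geq 0$, the putative $U$ in your $U(0)=1$ scenario would be \emph{subharmonic}, not superharmonic; the maximum-principle argument (non-negative, subharmonic, decaying at infinity forces $U\equiv 0$) still closes the case, but the terminology should be corrected.
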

\begin{proof}
	Suppose that there exists a sequence $\varepsilon_{j}\rightarrow0$ such that $u_{\varepsilon_{j}}$ has at least two maximum points $P_{\varepsilon_{j}}^{1}$ 	and $P_{\varepsilon_{j}}^{2}$.
	By Lemma \ref{lem:2max} we know that
	$d_{g}(P_{\varepsilon_{j}}^{1},P_{\varepsilon_{j}}^{2})\rightarrow0$.\\
	We have also that 
	\begin{equation}
		\lim_{j}\frac{1}{\varepsilon_{j}}d_{g}(P_{\varepsilon_{j}}^{1},P_{\varepsilon_{j}}^{2})=+\infty. \label{eq:limmax}
	\end{equation}
	Indeed, suppose by contradiction that 	 $d_{g}(P_{\varepsilon_{j}}^{1},P_{\varepsilon_{j}}^{2})\le C\varepsilon_{j}$
	for some $C>0$ and let
	\[
	w_{\varepsilon_{j}}:=u_{\varepsilon_{j}}(\exp_{P_{\varepsilon_{j}}^{1}}(\varepsilon_{j}\cdot))\text{ in }B(0,2C).
	\]
	Thus, for $j$ large enough, $w_{\varepsilon_{j}}$ has two maximum points in $B(0,2C)$.\\
	Moreover, arguing as in Lemma \ref{lem:2max}, $w_{\varepsilon_{j}}\rightarrow U$ in $C_{\text{loc}}^{2}(\mathbb{R}^{3})$ and the two maximum points of $w_{\varepsilon_{j}}$ collapse in $0$. Thus $0$ should be a degenerate critical point for $U$. This is a contradiction and \eqref{eq:limmax} is proved.\\
	Now, in light of \eqref{eq:limmax}, we have that, fixed $\rho>0$, then $B_{g}(P_{\varepsilon_{j}}^{1},\rho\varepsilon_{j})\cap B_{g}(P_{\varepsilon_{j}}^{2},\rho\varepsilon_{j})=\emptyset$ for $j$ large. Then we proceed as in the final part of the proof of Lemma \ref{lem:2max} obtaining $J_{\varepsilon_{j}}(u_{\varepsilon_{j}})>m_\infty+\delta$ which leads us to a contradiction.\end{proof}

We conclude this section with the profile description of $u_\varepsilon$.

\begin{lemma} \label{lem:stime}
As $\varepsilon\rightarrow0$, for any $\rho>0$, $\|u_{\varepsilon}-W_{P_{\varepsilon},\varepsilon}\|_{C^{2}(B_{g}(P_{\varepsilon},\varepsilon\rho))}\rightarrow0$, and $\|u_{\varepsilon}-W_{P_{\varepsilon},\varepsilon}\|_{L^{\infty}(M)}\rightarrow0$.
\end{lemma}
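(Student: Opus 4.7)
The plan is to split the statement into two independent claims: the $C^{2}$-convergence on the semiclassical ball, which follows directly from the rescaling analysis already developed in Lemma \ref{lem:2max}, and the global $L^{\infty}$-convergence, which requires an additional concentration argument to rule out mass escape away from $P_{\varepsilon}$.

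For the first claim, I would set $v_{\varepsilon}(z):=u_{\varepsilon}(\exp_{P_{\varepsilon}}(\varepsilon z))$ for $|z|<r/\varepsilon$ and consider the cut-off $\bar v_{\varepsilon}(z):=v_{\varepsilon}(z)\chi_{r}(\varepsilon |z|)\in H^{1}(\mathbb{R}^{3})$. Reproducing verbatim the argument of Lemma \ref{lem:2max} with $P_{\varepsilon}$ in place of $P_{\varepsilon}^{1}$, using the boundedness of $\|u_{\varepsilon}\|_{\varepsilon}$ from the energy estimate $J_{\varepsilon}(u_{\varepsilon})\le m_{\infty}+\delta$, the lower bound $v_{\varepsilon}(0)=u_{\varepsilon}(P_{\varepsilon})\ge 1$ from Lemma \ref{lem:maxval}, and the already established $\|\phi_{u_{\varepsilon}}\|_{C^{2}(M)}\to 0$, one obtains $\bar v_{\varepsilon}\to\bar v$ weakly in $H^{1}(\mathbb{R}^{3})$ and, via the bootstrap in Appendix \ref{bootapp}, in $C^{2}_{\mathrm{loc}}(\mathbb{R}^{3})$, where $\bar v$ is a nonnegative nontrivial solution of $-\Delta w+w=w^{p-1}$ attaining a maximum at the origin. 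Kwong's uniqueness result then gives $\bar v=U$. Since $W_{P_{\varepsilon},\varepsilon}$ in rescaled coordinates is $U(z)\chi_{r}(\varepsilon|z|)$, which equals $U(z)$ on $B(0,\rho)$ as soon as $\varepsilon<r/(2\rho)$, the $C^{2}_{\mathrm{loc}}$ convergence $v_{\varepsilon}\to U$ reexpresses as $\|u_{\varepsilon}-W_{P_{\varepsilon},\varepsilon}\|_{C^{2}(B_{g}(P_{\varepsilon},\varepsilon\rho))}\to 0$.

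For the second claim, suppose by contradiction that there exist sequences $\varepsilon_{n}\to 0$ and $x_{n}\in M$ with $|u_{\varepsilon_{n}}(x_{n})-W_{P_{\varepsilon_{n}},\varepsilon_{n}}(x_{n})|\ge c>0$, and set $d_{n}:=d_{g}(x_{n},P_{\varepsilon_{n}})/\varepsilon_{n}$. If $\{d_{n}\}$ is bounded, say $d_{n}\le\rho$, the first claim applied with this $\rho$ is immediately contradicted. If $d_{n}\to+\infty$, the exponential decay of $U$ gives $W_{P_{\varepsilon_{n}},\varepsilon_{n}}(x_{n})\to 0$, hence $u_{\varepsilon_{n}}(x_{n})\ge c/2$ eventually. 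Rescaling around $x_{n}$ via $\tilde v_{n}(z):=u_{\varepsilon_{n}}(\exp_{x_{n}}(\varepsilon_{n}z))$, the same compactness and bootstrap arguments yield $\tilde v_{n}\to\tilde v$ in $C^{2}_{\mathrm{loc}}(\mathbb{R}^{3})$ with $\tilde v$ a nontrivial nonnegative solution of $-\Delta w+w=w^{p-1}$; by uniqueness $\tilde v=U(\cdot-z_{0})$ for some $z_{0}\in\mathbb{R}^{3}$. Because $d_{n}\to\infty$, for any fixed $R>0$ the balls $B_{g}(P_{\varepsilon_{n}},\varepsilon_{n}R)$ and $B_{g}(x_{n},\varepsilon_{n}R)$ are disjoint for $n$ large, and combining the identity
\begin{equation*}
J_{\varepsilon_{n}}(u_{\varepsilon_{n}})=\frac{p-2}{2p}\|u_{\varepsilon_{n}}\|_{\varepsilon_{n}}^{2}+\frac{p-4}{4p\varepsilon_{n}^{3}}\int_{M}\phi_{u_{\varepsilon_{n}}}u_{\varepsilon_{n}}^{2}\,d\mu_{g}
\end{equation*}
(valid on $\mathcal{N}_{\varepsilon_{n}}$ since $p>4$) with the same change-of-variables computation used at the end of Lemma \ref{lem:2max}, the contribution of each of the two disjoint balls converges to $\tfrac{p-2}{2p}\int_{B(0,R)}[|\nabla U|^{2}+U^{2}]\,dz$. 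Letting $R\to+\infty$ yields $\liminf_{n}J_{\varepsilon_{n}}(u_{\varepsilon_{n}})\ge 2m_{\infty}$, contradicting $J_{\varepsilon_{n}}(u_{\varepsilon_{n}})\le m_{\infty}+\delta<2m_{\infty}$.

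The delicate point of this plan is the second case of the $L^{\infty}$ argument: one must confirm that a second bubble developing far away from $P_{\varepsilon_{n}}$ indeed carries a full $m_{\infty}$ of energy in the limit, independently of the bubble at $P_{\varepsilon_{n}}$. Since the relevant computation is the one already performed at the very end of the proof of Lemma \ref{lem:2max} (and the Proca contribution $\phi_{u_{\varepsilon_{n}}}$ is negligible in both regions by the bound $\|\phi_{u_{\varepsilon_{n}}}\|_{C^{2}}\to 0$), no essentially new ingredient is required beyond a careful bookkeeping of the two-bubble concentration-compactness lower bound.
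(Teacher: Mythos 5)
Your $C^{2}$-local claim is handled the same way the paper does: rescale around $P_{\varepsilon}$, use the bootstrap from Appendix \ref{bootapp}, invoke uniqueness of the positive $H^1$ solution, and observe that the cut-off in $W_{P_{\varepsilon},\varepsilon}$ is identically $1$ on $B_g(P_{\varepsilon},\varepsilon\rho)$ for $\varepsilon$ small. That part is fine.

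For the global $L^{\infty}$ bound you take a genuinely different route from the paper. The paper's argument is essentially one line: by Lemma \ref{lem:unimax} the point $P_{\varepsilon}$ is the \emph{unique} local maximum, so $\max_{M\setminus B_{g}(P_{\varepsilon},\varepsilon\rho)}u_{\varepsilon}$ is attained on $\partial B_{g}(P_{\varepsilon},\varepsilon\rho)$, where the already-established $C^{2}$ convergence gives $\max_{|z|=\rho}U+o(1)\le ce^{-\alpha\rho}+o(1)$; letting $\rho$ be large and $\varepsilon$ small finishes the job. You instead run a contradiction argument: a discrepancy point $x_{n}$ at rescaled distance $d_{n}\to\infty$ from $P_{\varepsilon_{n}}$ would, after blow-up around $x_{n}$, produce a second nontrivial limiting bubble $U(\cdot-z_{0})$, and summing the energy over the two disjoint concentration regions forces $\liminf J_{\varepsilon_{n}}(u_{\varepsilon_{n}})\ge 2m_{\infty}$, contradicting $J_{\varepsilon_{n}}(u_{\varepsilon_{n}})\le m_{\infty}+\delta<2m_{\infty}$. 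This is correct — the compactness/bootstrap machinery applies to the rescaling around $x_{n}$ just as it does around maximum points (the only place where being a max is used is to get a nontrivial limit, and you replace that by the lower bound $u_{\varepsilon_{n}}(x_{n})\ge c/2$) — and the disjoint-ball energy count is exactly the computation closing Lemma \ref{lem:2max}. The trade-off: your argument does not invoke Lemma \ref{lem:unimax} at all, so in a sense it is more self-contained, but it re-derives the same two-bubble exclusion that Lemma \ref{lem:unimax} already encapsulates and is correspondingly longer; the paper's version is cheaper once uniqueness of the maximum is in hand. A cosmetic imprecision: the contribution of the ball around $x_{n}$ tends to $\tfrac{p-2}{2p}\int_{B(-z_{0},R)}(|\nabla U|^{2}+U^{2})$ rather than $\tfrac{p-2}{2p}\int_{B(0,R)}(|\nabla U|^{2}+U^{2})$, but since both tend to $m_{\infty}$ as $R\to\infty$ this does not affect the conclusion.
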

\begin{proof}
	By the $C^{2}$ convergence proved in Lemma \ref{lem:2max} we have that, given $\rho>0$,
	\[
	\|u_{\varepsilon}-W_{P_{\varepsilon},\varepsilon}\|_{C^{2}(B_{g}(P_{\varepsilon},\varepsilon\rho))}=\|u_{\varepsilon}(\exp_{P_{\varepsilon}}(\varepsilon z))-U(z)\|_{C^{2}(B(0,\rho))}\rightarrow0
	\quad \text{ as } \varepsilon\rightarrow0.
	\]
	Moreover, since, by Lemma \ref{lem:unimax}, $u_{\varepsilon}$ has a unique maximum point $P_\varepsilon$, we have that, for any $\rho>0$,
	\[
	\max_{M\setminus B_{g}(P_{\varepsilon},\varepsilon\rho)}u_{\varepsilon}
	=\max_{\partial B_{g}(P_{\varepsilon},\varepsilon\rho)}u_{\varepsilon}
	=\max_{|z|=\rho}U(z)+o(1)
	\le ce^{-\alpha\rho}+o(1)
	\]
	for some constant $c,\alpha>0$ as $\varepsilon\rightarrow0$. This proves the claim.\end{proof}

\section{A further solution}\label{sec:further}
In this section we prove that, since $\operatorname{cat}(M)>1$, for $\varepsilon$ small enough, there exists a further nonconstant solution with higher energy, concluding the proof of Theorem \ref{th:main}.

To this end, it is enough to show that, for $\varepsilon$ small, it is possible to construct a contractible set $T_\varepsilon \subset \mathcal{N}_\varepsilon$ with $
\sup_{T_\varepsilon} J_\varepsilon \leq C$, where the constant $C$ does not depend on $\varepsilon$.\\
Then, since $ \mathcal{N}_\varepsilon\cap J_\varepsilon^{m_\infty+\delta}$ is not contractible due to
\[
\operatorname{cat}(\mathcal{N}_\varepsilon\cap J_\varepsilon^{m_\infty+\delta})\geq \operatorname{cat} (M)>1,
\]
we get that there exists a solution $w_\varepsilon$ with $m_\infty+\delta<J_\varepsilon(w_\varepsilon)$ applying the second part of Theorem \ref{prelim}.
Given a positive function $V\in H^1(\mathbb{R}^3)$ and a point $\xi_0\in M$, we define a function $v_\varepsilon\in H^1(M)$ as 
\[
v_{\varepsilon}:=V_{\varepsilon}(\exp_{\xi_0}^{-1}\cdot)\chi(|\exp_{\xi_0}^{-1}\cdot|)
\]
where $\chi$ is as in definition of the exponential map and $V_\varepsilon=V(\cdot/\varepsilon)$, and the cone $C_\varepsilon\subset H^1(M)$ as
$$
C_\varepsilon:=\{u=\theta v_\varepsilon +(1-\theta)W_{\xi,\varepsilon},\  \theta\in [0,1],\ \xi\in M\}.
$$
Easily we have that $C_\varepsilon$ is a compact contractible set in $H^1(M)$.\\
Since, by (\ref{tucont}) in Lemma \ref{lemNehari}, the map $u\mapsto t_u$ is continuous, we can project $C_\varepsilon$ on the Nehari manifold 
$\mathcal{N}_\varepsilon$ obtaining the following compact and contractible set 
$$
T_\varepsilon:=\{t_u u :\ u\in C_\varepsilon\}.
$$
In addiction, 
since
\begin{align*}
	\frac{1}{\varepsilon} \int_M | \nabla_g v_{\varepsilon} |^2 d \mu_g
	& = \frac{1}{\varepsilon} \int_{B (0, r)} g^{{ij}}_{\xi_0} (y)
	\partial_i \tilde{v}_{\varepsilon} (y) \partial_j \tilde{v}_{\varepsilon}
	(y) | g_{\xi_0} (y) |^{1 / 2} {dy}\\
	& = \int_{B (0, r / \varepsilon)} g^{{ij}}_{\xi_0} (\varepsilon z)
	\chi^2 (| \varepsilon z |) \partial_i V (z) \partial_j V (z) | g_{\xi_0}
	(\varepsilon z) |^{1 / 2} {dz}\\
	& \qquad + \varepsilon  \int_{B (0, r / \varepsilon)} g^{{ij}}_{\xi_0}
	(\varepsilon z) \chi (| \varepsilon z |) \chi' (| \varepsilon z |) V (z)
	\partial_i V (z) \frac{z_j}{| z |} | g_{\xi_0} (\varepsilon z) |^{1 / 2}
	{dz}\\
	& \qquad + \varepsilon^2 \int_{B (0, r / \varepsilon)}
	g^{{ij}}_{\xi_0} (\varepsilon z) [\chi' (| \varepsilon z |)]^2
	\frac{z_i z_j}{| z |^2} V^2 (z) | g_{\xi_0} (\varepsilon z) |^{1 / 2}
	{dz}\\
	& \leq  C \Big[| \nabla V|_2^2 + \frac{2}{r}  \varepsilon |
	\nabla V|_2 |V|_2 + \frac{4}{r^2}  \varepsilon^2 |V|_2^2\Big],
\end{align*}
arguing as in (\ref{1lem:limiti}) of Lemma \ref{lem:limiti}, we have 
\[
\|\theta v_{\varepsilon}+(1-\theta)W_{\xi,\varepsilon}\|_{\varepsilon}\le\|v_{\varepsilon}\|_{\varepsilon}+\|W_{\xi,\varepsilon}\|_{\varepsilon}
\le
C\left(\|V\|_{H^{1}}+\|U\|_{H^{1}}\right),
\]
and
\[
|\theta v_{\varepsilon}+(1-\theta)W_{\xi,\varepsilon}|_{\varepsilon,p}
\le C\left(|V|_p+|U|_p\right).
\]
Furthermore, since $v_{\varepsilon}\ge0$ and $W_{q,\varepsilon}\ge0$, it holds
\[
\theta v_{\varepsilon}(x)+(1-\theta)W_{\xi,\varepsilon}(x)
\ge
\max\left\{ \theta v_{\varepsilon}(x),(1-\theta) W_{\xi,\varepsilon}(x) \right\},
\text{ for all } x\in M,
\]
so that
\[ | \theta v_{\varepsilon} + (1 - \theta) W_{\xi,\varepsilon}|_p^p \geq \max \{ | \theta v_{\varepsilon}|_p^p, | (1 -
\theta) W_{\xi,\varepsilon}|_p^p \} . \]
Then, if $\theta \geq \frac{1}{2}$, 
\[ \max \{ | \theta v_{\varepsilon}|_p^p, | (1 - \theta) W_{\xi,\varepsilon}|_p^p \} \geq \theta^p
|v_{\varepsilon}|_p^p \geq \frac{1}{2^p} |v_{\varepsilon}|_p^p \geq \frac{1}{2^p} \min \{
|v_{\varepsilon}|_p^p, |W|_p^p \} \]
and, if $\theta \leq \frac{1}{2}$, 
\[ \max \{ | \theta v_{\varepsilon}|_p^p, | (1 - \theta) W_{\xi,\varepsilon}|_p^p \} \geq (1 - \theta)^p
|W_{\xi,\varepsilon}|_p^p \geq \frac{1}{2^p} |W_{\xi,\varepsilon}|_p^p \geq \frac{1}{2^p} \min \{
|v_{\varepsilon}|_p^p, |W_{\xi,\varepsilon}|_p^p \} . \]
Thus, arguing as in (\ref{2lem:limiti}) of Lemma \ref{lem:limiti} and using it,
\[
| \theta v_{\varepsilon} + (1 - \theta) W_{\xi,\varepsilon}|_{\varepsilon,p}^p
\geq
\frac{1}{2^p} \min \{ |v_{\varepsilon}|_{\varepsilon,p}^p, |W_{\xi,\varepsilon}|_{\varepsilon,p}^p\}
\geq
\frac{1}{4^p} \min \{ |V|_{p}^p, |U|_{p}^p\}.
\]
Analogously, we get also
\[
\|\theta v_{\varepsilon}+(1-\theta)W_{q,\varepsilon}\|_{\varepsilon}^2
\geq
\frac{1}{4^2}\min\left\{ |V|_{2}^2,|U|_{2}^2\right\} .
\]
Finally, by (\ref{aLem:BP}) in Lemma \ref{lem:BP} and \eqref{eq:punto 3} we have 
$$
\frac1{\varepsilon^3}G(\theta v_{\varepsilon}+(1-\theta)W_{q,\varepsilon})
\le \varepsilon^3 C |\theta v_{\varepsilon}+(1-\theta)W_{q,\varepsilon}|^4_{2,\varepsilon}
$$
Hence, using \eqref{eq:Nt}, we have that there exist $c_1,c_2>0$, independent of $\varepsilon$, such that
$$c_1<t_{\theta v_{\varepsilon}+(1-\theta)W_{q,\varepsilon}}<c_2$$
and so we can conclude now, since, for any $t_uu\in T_\varepsilon$, we have 
\[
J_{\varepsilon}(t_uu)=\frac{1}{4}t_u^{2}\|u\|_{\varepsilon}^{2}+\left(\frac{1}{4}-\frac{1}{p}\right)t_u^{p}|u^{+}|_{p,\varepsilon}^{p}\le C.
\]
Finally, we check that the solution found is not a constant function.
\\
Notice that, if $u\equiv c_* \neq 0$,  then $\phi\equiv 4\pi c_*^2$, and $c_*^{p-2}-4\pi q^2 c_*^2-\omega=0$, so that $c_*\geq C>0$.
Since $c_*\in \mathcal{N}_\varepsilon$, then
$$
J_\varepsilon(c_*)=\frac{1}{4}\|c_*\|_{\varepsilon}^{2}+\left(\frac{1}{4}-\frac{1}{p}\right)|c_*|_{p,\varepsilon}^{p}=
\frac{\mu_g (M)}{\varepsilon^3}\left[ \frac{c_*^2}{4}+\left(\frac{1}{4}-\frac{1}{p}\right)c_*^p\right]
\rightarrow+\infty
\text{ as } \varepsilon\to0
$$
and this is not possible since $\sup_{T_\varepsilon} J_\varepsilon \leq C$.

\appendix
\section{Ekeland Principle}\label{EVP}

	In this Appendix we want to show that \eqref{eq:ps} holds.\\
	To this end, we proceed as in \cite[Lemma 5.4]{BBM}, applying the Ekeland Principle (see \cite[Chapter 4]{deFig}) as follows:\\
	for every $\theta,\iota>0$ and $u\in J_\varepsilon^{m_\varepsilon+\theta/2}$, there exists $u_\iota\in \mathcal{N}_\varepsilon$ such that
	\[
	J_\varepsilon (u_\iota)<J_\varepsilon (u),
	\quad
	\|u_\iota-u\|_\varepsilon<\iota,
	\quad
	J_\varepsilon (u_\iota)<J_\varepsilon (v)+\frac{\theta}{\iota}\|u_\iota-u\|_\varepsilon<\iota
	\text{ for all }v\in\mathcal{N}_\varepsilon.
	\]
	Thus, for every $k$, taking $\theta=4\delta_k$ and $\iota=4\sqrt{\delta_k}$, there exists $\tilde{u}_k\in \mathcal{N}_{\varepsilon_{k}}$ such that
	\begin{equation}
		\label{EkPrinc}
		J_{\varepsilon_{k}}(\tilde{u}_k) < J_{\varepsilon_{k}}(u_k),
		\quad
		\| \tilde{u}_k - u_k\|_{\varepsilon_{k}}<4\sqrt{\delta_k},
		\quad
		J_{\varepsilon_{k}}(\tilde{u}_k)< J_{\varepsilon_{k}}(v)+\sqrt{\delta_k}\| \tilde{u}_k - v\|_{\varepsilon_{k}} \text{ for all } v\in \mathcal{N}_{\varepsilon_{k}}.
	\end{equation}
	The boundedness of $\{\|u_k\|_{\varepsilon_{k}}\}$ and \eqref{EkPrinc} implies that $\{\|\tilde{u}_k\|_{\varepsilon_{k}}\}$ is bounded too.\\
	Observe, moreover, that, for every $\xi\in T_{\tilde{u}_k}\mathcal{N}_{\varepsilon_{k}}$, there exists a smooth curve $\gamma:[a,b]\to \mathcal{N}_{\varepsilon_{k}}$, with $a<0<b$, such that
	\[
	\gamma(0)=\tilde{u}_k
	\text{ and }
	\gamma'(0)=\xi
	\]
	(see e.g. \cite{AM}).
	\\
	Let $\{t_n\}\subset\mathbb{R}$ such that $t_n \to 0$.\\
	Since
	\[
	J_{\varepsilon_{k}}(\gamma(t_n))
	=
	J_{\varepsilon_{k}}(\gamma(0))
	+ J_{\varepsilon_{k}}'(\gamma(0))[\gamma'(0)]t_n
	+O(t_n^2)
	=
	J_{\varepsilon_{k}}(\tilde{u}_k)
	+ J_{\varepsilon_{k}}'(\tilde{u}_k)[\xi] t_n
	+O(t_n^2)
	\]
	and
	\[
	\| \tilde{u}_k - \gamma(t_n)\|_{\varepsilon_{k}}
	= \| \gamma(0) - \gamma(t_n)\|_{\varepsilon_{k}}
	= \| \gamma'(0)t_n +O(t_n^2)\|_{\varepsilon_{k}}
	= \| \xi t_n +O(t_n^2)\|_{\varepsilon_{k}},
	\]
	by the Ekeland Variational Principle, we get
	\[
	\sqrt{\delta_k}>
	\frac{J_{\varepsilon_{k}}(\tilde{u}_k)-J_{\varepsilon_{k}}(\gamma(t_n))}{\| \tilde{u}_k - \gamma(t_n)\|_{\varepsilon_{k}}}
	=
	\frac{t_n}{|t_n|}
	\frac{J_{\varepsilon_{k}}'(\tilde{u}_k)[\xi]+O(t_n)}{\| \xi+O(t_n)\|_{\varepsilon_{k}}}.
	\]
	Considering the left and right limits as $t_n\to 0$ we can conclude that
	\begin{equation}
		\label{J'tan}
		|J_{\varepsilon_{k}}'(\tilde{u}_k)[\xi]|\leq \sqrt{\delta_k} \| \xi\|_{\varepsilon_{k}}
		\text{ for all } \xi\in T_{\tilde{u}_k}\mathcal{N}_{\varepsilon_{k}}.
	\end{equation}
	Let now $\varphi\in H^1(M)$ be arbitrary.\\
	Since $\tilde{u}_k\in \mathcal{N}_{\varepsilon_{k}}$, by (\ref{disc0}) in Lemma \ref{lemNehari},
	\begin{equation}\label{enneprimo}
		N'_{\varepsilon_{k}}(\tilde{u}_k)[\tilde{u}_k]
		= -2 \| \tilde{u}_k\|_{\varepsilon_{k}}^2
		-(p-4)|\tilde{u}_k^+|_{p,\varepsilon_{k}}^p
		\leq -C
		<0.
	\end{equation}
	Then $\tilde{u}_k\notin T_{\tilde{u}_k} \mathcal{N}_{\varepsilon_{k}}$.
	Thus there exists $\lambda,\mu\in\mathbb{R}$ and $\xi \in T_{\tilde{u}_k} \mathcal{N}_{\varepsilon_{k}}$ such that
	$\varphi =\lambda\xi + \mu \tilde{u}_k$.\\
	Observe that, by \eqref{enneprimo}, there exists $C\in (0,1)$ such that for all $u\in\mathcal{N}_{\varepsilon_{k}}, \xi \in T_u \mathcal{N}_{\varepsilon_{k}}$, $ |\langle\xi,u\rangle_{\varepsilon_{k}}|\leq C\|\xi\|_{\varepsilon_{k}} \|u\|_{\varepsilon_{k}}$. Then a straightforward calculation shows that there exists $C>0$ such that $\|\lambda\xi\|_{\varepsilon_k} \leq C\| \varphi\|_{\varepsilon_k}$.\\
	Then, by \eqref{J'tan}, we get that for every $\varphi \in H^1(M)$
	\begin{equation}
		\label{eq:pstilde}
		|J_{\varepsilon_{k}}'(\tilde{u}_k)[\varphi]|
		= |\lambda J_{\varepsilon_{k}}'(\tilde{u}_k)[\xi]|
		\leq \sqrt{\delta_k} \| \lambda \xi\|_{\varepsilon_{k}}
		\leq C \sqrt{\delta_k} \| \varphi\|_{\varepsilon_{k}}.
	\end{equation}
	Hence, we can conclude. Indeed, since
	\[
	|J_{\varepsilon_{k}}'(u_k)[\varphi]|
	\leq
	|J_{\varepsilon_{k}}'(u_k)[\varphi]-J_{\varepsilon_{k}}'(\tilde{u}_k)[\varphi]|
	+|J_{\varepsilon_{k}}'(\tilde{u}_k)[\varphi]|,
	\]
	by \eqref{eq:pstilde}, it is enough to prove that
	\begin{equation}
		\label{claimPS}
		|J_{\varepsilon_{k}}'(u_k)[\varphi]-J_{\varepsilon_{k}}'(\tilde{u}_k)[\varphi]|
		\leq C\sqrt{\delta_{k}}\|\varphi\|_{\varepsilon_{k}}.
	\end{equation}
	This follows observing that
	\begin{align*}
		|J_{\varepsilon_{k}}'(u_k)[\varphi]-J_{\varepsilon_{k}}'(\tilde{u}_k)[\varphi]|
		&=
		\Big|\frac{1}{\varepsilon_{k}^3}\Big[\varepsilon_{k}^2\int_M \nabla_{g} (u_k-\tilde{u}_k)\nabla_{g}\varphi d\mu_{g}
		+\omega \int_M (u_k-\tilde{u}_k)\varphi d\mu_{g}\\
		&\qquad
		+q^2 \int_M (\phi_{u_k}u_k-\phi_{\tilde{u}_k}\tilde{u}_k)\varphi d\mu_{g}
		-\int_M (|u_k^+|^{p-2}u_k^+ - |\tilde{u}_k^+|^{p-2}\tilde{u}_k^+)\varphi d\mu_g\Big]
		\Big|.
	\end{align*}
	Then, by H\"older inequality and \eqref{EkPrinc},
	\[
	\Big|\frac{1}{\varepsilon_{k}^3}\Big[\varepsilon_{k}^2\int_M \nabla_{g} (u_k-\tilde{u}_k)\nabla_{g}\varphi d\mu_{g}
	+\omega \int_M (u_k-\tilde{u}_k)\varphi d\mu_{g}	\Big|
	\leq
	\|u_k-\tilde{u}_k\|_{\varepsilon_{k}} \|\varphi\|_{\varepsilon_{k}}
	< 4 \sqrt{\delta_{k}} \|\varphi\|_{\varepsilon_{k}}.
	\]
	Moreover
	\begin{equation}\label{phitwoterms}
		\frac{1}{\varepsilon_{k}^3}\Big|\int_M (\phi_{u_k}u_k-\phi_{\tilde{u}_k}\tilde{u}_k)\varphi d\mu_{g}\Big|
		\leq
		\frac{1}{\varepsilon_{k}^3}\Big|\int_M \phi_{u_k} (u_k-\tilde{u}_k)\varphi d\mu_{g}\Big|
		+\frac{1}{\varepsilon_{k}^3}\Big|\int_M (\phi_{u_k}-\phi_{\tilde{u}_k})\tilde{u}_k \varphi d\mu_{g}\Big|.
	\end{equation}
	Considering the first term in the right hand side of \eqref{phitwoterms}, by Lemma \ref{lem:BP}, Sobolev embedding $H^2(M)\subset C^0(M)$, H\"older inequality, \eqref{imb}, the boundedness of $\{\|u_k\|_{\varepsilon_{k}}\}$, and \eqref{EkPrinc},
	\[
	\frac{1}{\varepsilon_{k}^3}\Big|\int_M \phi_{u_k}(u_k-\tilde{u}_k)\varphi d\mu_{g}\Big|
	\leq
	|\phi_{u_k}|_{C^0} |u_k-\tilde{u}_k|_{2,\varepsilon_{k}} |\varphi|_{2,\varepsilon_{k}}
	\leq
	C | u_k |_2^2 \|u_k-\tilde{u}_k\|_{\varepsilon_{k}} \|\varphi\|_{\varepsilon_{k}}
	\leq C \varepsilon_{k}^3 \sqrt{\delta_{k}} \|\varphi\|_{\varepsilon_{k}}.
	\]
%
	To estimate the second term in the right hand side of \eqref{phitwoterms}, first observe that
	\[
	-\Delta_g (\phi_{u_k}-\phi_{\tilde{u}_k})
	+ a^2 \Delta_g^2 (\phi_{u_k}-\phi_{\tilde{u}_k})
	+ (\phi_{u_k}-\phi_{\tilde{u}_k})
	= 4\pi (u_k^2-\tilde{u}_k^2).
	\]
	Then, using also Sobolev and H\"older inequalities, \eqref{imb}, \eqref{EkPrinc}, and the boundedness of $\{\|u_k\|_{\varepsilon_{k}}\}$ and $\{\|\tilde{u}_k\|_{\varepsilon_{k}}\}$,
	\begin{align*}
	\| \phi_{u_k}-\phi_{\tilde{u}_k}\|_{H^2}^2
	&= 4\pi \int_M (\phi_{u_k}-\phi_{\tilde{u}_k}) (u_k^2-\tilde{u}_k^2) d\mu_{g}
	\leq C \|\phi_{u_k}-\phi_{\tilde{u}_k}\|_{H^2} \int_M |u_k^2-\tilde{u}_k^2|d\mu_g\\
	&
	\leq C \| \phi_{u_k}-\phi_{\tilde{u}_k}\|_{H^2} |u_k-\tilde{u}_k|_2 |u_k+\tilde{u}_k|_2
	\leq C \varepsilon_{k}^3 \| \phi_{u_k}-\phi_{\tilde{u}_k}\|_{H^2}  \|u_k-\tilde{u}_k\|_{\varepsilon_{k}}.
	\end{align*}
	Thus, using also Sobolev imbeddings, H\"older inequality, and \eqref{EkPrinc},
	\begin{align*}
		\frac{1}{\varepsilon_{k}^3}\Big|\int_M (\phi_{u_k}-\phi_{\tilde{u}_k}) \tilde{u}_k \varphi d\mu_{g}\Big|
		&\leq
		\frac{1}{\varepsilon_{k}^3}
		|\phi_{u_k}-\phi_{\tilde{u}_k}|_{C^0}
		|\tilde{u}_k|_2 |\varphi|_2
		\leq
		\frac{C}{\varepsilon_{k}^3} \|\phi_{u_k}-\phi_{\tilde{u}_k}\|_{H^2} |\tilde{u}_k|_2 |\varphi|_2\\
		&\leq
		C \varepsilon_{k}^3 \|u_k-\tilde{u}_k\|_{\varepsilon_{k}}
		|\tilde{u}_k|_{2,\varepsilon_{k}} |\varphi|_{2,\varepsilon_{k}}
		\leq
		C \varepsilon_{k}^3 \sqrt{\delta_{k}} \|\varphi\|_{\varepsilon_{k}}.
	\end{align*}
	Finally, by Lagrange Theorem,
	H\"older inequality, \eqref{imb}, boundedness of $\{\|u_k\|_{\varepsilon_{k}}\}$ and $\{\|\tilde{u}_k\|_{\varepsilon_{k}}\}$, \eqref{EkPrinc}, we have
	\begin{align*}
		\Big|\frac{1}{\varepsilon_{k}^3}\int_M (|u_k^+|^{p-2}u_k^+ - |\tilde{u}_k^+|^{p-2}\tilde{u}_k^+)\varphi d\mu_g
		\Big|
		&\leq
		\frac{p-1}{\varepsilon_{k}^3}
		\int_M |\theta_k u_k^+ +(1-\theta_k ) \tilde{u}_k^+|^{p-2}
		|u_k^+ - \tilde{u}_k^+| |\varphi| d\mu_g\\
		&\leq
		C
		(|u_k^+|_{p,\varepsilon_{k}}^{p-2} + |\tilde{u}_k^+|_{p,\varepsilon_{k}}^{p-2})
		|u_k^+ - \tilde{u}_k^+|_{p,\varepsilon_{k}}
		|\varphi|_{p,\varepsilon_{k}}
		\leq
		C \sqrt{\delta_{k}} \|\varphi\|_{\varepsilon_{k}},
	\end{align*}
	completing the proof of \eqref{claimPS}.

\section{Bootstrap argument}\label{bootapp}

In this section, through a classical bootstrap argument, we prove that $\{\bar{v}_{j}^{1}\} \subset C^{2}(B(0,R/2))$ and that it is bounded in $C^{2}(B(0,R/2))$.\\
Let $R>0$.\\
Observe that, arguing as in \eqref{palle}, for $j$ large,
\begin{equation}\label{pallebis}
	B(0,R)
	\subset B\Big(0,\frac{r}{4\varepsilon_{j}}\Big)
	\subset B\Big(-\frac{Q_{\varepsilon_{j}}^1}{\varepsilon_{j}},\frac{r}{2\varepsilon_{j}}\Big).
\end{equation}
Thus, in $B(0,R)$, since $u_{\varepsilon_{j}}$ is a solution of \eqref{eq:BPPnl}, we have
\begin{equation}\label{glaplueps}
	\begin{split}
		-\varepsilon_{j}^2 (\Delta_g u_{\varepsilon_{j}}) \big(\exp_{P^{1}}(Q_{\varepsilon_{j}}^{1}+\varepsilon_{j}z)\big)
		&=
		- u_{\varepsilon_{j}} \big(\exp_{P^{1}}(Q_{\varepsilon_{j}}^{1}+\varepsilon_{j}z)\big)
		+ (u_{\varepsilon_{j}})^{p-1}\big(\exp_{P^{1}}(Q_{\varepsilon_{j}}^{1}+\varepsilon_{j}z)\big)
		\\
		&\qquad
		- \phi_{u_{\varepsilon_j}} \big(\exp_{P^{1}}(Q_{\varepsilon_{j}}^{1}+\varepsilon_{j}z)\big) u_{\varepsilon_{j}} \big(\exp_{P^{1}}(Q_{\varepsilon_{j}}^{1}+\varepsilon_{j}z)\big).
	\end{split}
\end{equation}
But, since
\begin{multline*}
	(\Delta_g u_{\varepsilon_{j}}) \big(\exp_{P^{1}}(Q_{\varepsilon_{j}}^{1}+\varepsilon_{j}z)\big)
	=
	\frac{1}{\varepsilon_{j}^2} g_{P^{1}}^{il}(Q_{\varepsilon_{j}}^{1}+\varepsilon_{j}z)
	\partial_{il} \bar{v}_j^1 (z)
	\\
	+
	\frac{1}{\varepsilon_{j}^2 |g_{P^1}(Q_{\varepsilon_{j}}^{1}+\varepsilon_{j}z)|^{1/2}}
	\partial_{l}\left(g_{P^{1}}^{il}(Q_{\varepsilon_{j}}^{1}+\varepsilon_{j}\cdot)|g_{P^{1}}(Q_{\varepsilon_{j}}^{1}+\varepsilon_{j}\cdot)|^{1/2}\right) (z)
	\partial_{i} \bar{v}_j^1(z),
\end{multline*}
\eqref{glaplueps} reads as
\begin{equation}\label{bootstrap}
	-g_{P^{1}}^{il}(Q_{\varepsilon_{j}}^{1}+\varepsilon_{j}z) \partial_{il} \bar{v}_j^1 (z)
	=
	f_j(z),
\end{equation}
where
\begin{align*}
	f_j(z)
	&:=
	\frac{1}{|g_{P^1}(Q_{\varepsilon_{j}}^{1}+\varepsilon_{j}z)|^{1/2}}
	\partial_{l}\left(g_{P^{1}}^{il}(Q_{\varepsilon_{j}}^{1}+\varepsilon_{j}\cdot)|g_{P^{1}}(Q_{\varepsilon_{j}}^{1}+\varepsilon_{j}\cdot)|^{1/2}\right) (z)
	\partial_{i} \bar{v}_j^1(z)
	- \bar{v}_j^1(z)
	\\
	&\qquad
	- \bar{\phi}_j(z) \bar{v}_j^1(z)
	+ (\bar{v}_j^1)^{p-1}(z)
\end{align*}
and $\bar{\phi}_j(z):=\phi_{u_{\varepsilon_j}} \big(\exp_{P^{1}}(Q_{\varepsilon_{j}}^{1}+\varepsilon_{j}z)\big)$.\\
Let us show that
$\bar{v}_j^1 \in C^{0,\alpha}(B(0,R))$ for some $\alpha\in(0,1)$.\\
Let us consider equation \eqref{bootstrap} in $B(0,R)$ and let $q_0:=6$. Since $p \in (4, 6)$, then $q_0/(p-1)\in(6/5,2)$.
Thus
\[
\min \left\{ 2, \frac{q_0}{p-1} \right\} =  \frac{q_0}{p-1}
\]
and so, using the boundedness of $\{\bar{v}_j^1\}$ in $H^1(\mathbb{R}^3)$ and of $\{\|u_{\varepsilon_{j}}\|_{\varepsilon_{j}}\}$, \eqref{pallebis}, and Lemma \ref{lem:BP}, $f_j \in L^{q_0/(p-1)} (B(0,R))$ and
\begin{align*}
\left(\int_{B (0, R)} |\bar{\phi}_j \bar{v}_j^1|^{q_0/(p-1)} dz\right)^{(p-1)/q_0}
&\leq
C\left(\int_{B (0, R)} |\bar{\phi}_j \bar{v}_j^1|^2 dz\right)^{1/2}
\leq
C |\bar{\phi}_j |_{L^\infty(B (0, R))}\\
&\leq
C |\phi_{u_{\varepsilon_{j}}} (\exp_{P^1}(Q_{\varepsilon_{j}}^1+\varepsilon_{j}\cdot)) |_{L^\infty(B (-Q_{\varepsilon_{j}}^1/\varepsilon_{j}, r/(2\varepsilon_{j})))}\\
&\leq
C |\phi_{u_{\varepsilon_{j}}} (\exp_{P^1}(\cdot)) |_{L^\infty(B (0, r/2))}
\leq
C |\phi_{u_{\varepsilon_{j}}} |_{\infty}
\leq
C \|\phi_{u_{\varepsilon_{j}}} \|_{H^2}
\\
&
\leq
C |u_{\varepsilon_{j}} |_{2}^2
\leq
C \varepsilon_{j}^3.
\end{align*}
Hence, by \eqref{bootstrap},
\[
| \Delta \bar{v}_j^1 |_{L^{q_0/(p-1)} (B(0,R))} \leq C | f_j |_{L^{q_0/(p-1)} (B(0,R))} \leq C,
\]
and so, by a classical interpolation inequality,
$\bar{v}_j^1 \in W^{2, q_0/(p-1)} (B(0,R))$ and
\[
\|\bar{v}_j^1\|_{W^{2, 6/(p-1)} (B(0,R))} \leq C.
\]
If $q_0/(p-1)>3/2 $, namely if
\[
(p - 1) - 4 < 0,
\]
we get that $\bar{v}_j^1$ is continuous and, by the previous arguments, $\{\bar{v}_j^1\}$ is bounded in $C^{0,\alpha}(B(0,R))$ for some $\alpha\in (0,1)$.\\
If, instead, $q_0/(p-1) \leq 3/2$, namely if
\begin{equation*}
(p - 1) - 4 \geq 0,
\end{equation*}
or, equivalently, $ 5 \leq p < 6$, then $W^{2, q_0/(p-1)} (B(0,R))$ embeds in $L^{q_1} (B(0,R))$ with
\[ q_1 : = \frac{6}{(p - 1) - 4} . \]
Then we consider 
\[ \min \left\{ 2, \frac{q_1}{p - 1} \right\}  \]
and we iterate the procedure.
%
%
%
%
%
%
%
%
%
%
\\
So, at the $n$th step we take
\[ q_n : = \frac{6}{(p - 1)^n - 4 \sum_{k = 0}^{n - 1} (p - 1)^k} =
\frac{6}{(p - 1)^n - 4 \frac{(p - 1)^n - 1}{p - 2}} = \frac{6 (p - 2)}{(p -
	6) (p - 1)^n + 4}, \]
and we consider
\[ \min \left\{ 2, \frac{q_n}{p - 1} \right\} . \]
We can conclude if
\[ \min \left\{ 2, \frac{q_n}{p - 1} \right\}>\frac{3}{2} \]
which occurs in a finite number of steps since
\[ \frac{q_n}{p - 1} > \frac{3}{2},\]
namely for
\[ n > \frac{\log 4 - \log (6 - p)}{\log (p - 1)} - 1. \]
Observe that, at each step, whenever
\[ \min \left\{ 2, \frac{q_n}{p - 1} \right\}=\frac{q_n}{p - 1}>\frac{3}{2}, \]
arguing as before we get that $\{\bar{v}_j^1\}$ is bounded in $C^{0,\alpha}(B(0,R))$ for some $\alpha\in(0,1)$.\\
Now, let us write \eqref{bootstrap} as
\begin{equation}\label{bootstrap2}
	\begin{split}
		-g_{P^{1}}^{il}(Q_{\varepsilon_{j}}^{1}+\varepsilon_{j}z) \partial_{il} \bar{v}_j^1
		&- \frac{1}{|g_{P^1}(Q_{\varepsilon_{j}}^{1}+\varepsilon_{j}z)|^{1/2}}
		\partial_{l}\left(g_{P^{1}}^{il}(Q_{\varepsilon_{j}}^{1}+\varepsilon_{j}\cdot)|g_{P^{1}}(Q_{\varepsilon_{j}}^{1}+\varepsilon_{j}\cdot)|^{1/2}\right) (z)
		\partial_{i} \bar{v}_j^1\\
		&+\omega \bar{v}_j^1
		+ q^2 \bar{\phi}_j(z) \bar{v}_j^1
		= (\bar{v}_j^1)^{p-1}.
	\end{split}
\end{equation}
The continuity of $\bar{v}_j^1$ implies that the right hand side of \eqref{bootstrap2} is in $L^2(B(0,R))$. In addition, also $|\nabla (\bar{v}_j^1)^{p-1}| \in L^2(B(0,R))$ and so the right hand side of \eqref{bootstrap2} is in $H^1(B(0,R))$.
Thus, \cite[Theorem 8.10]{GT} implies that $\bar{v}_j^1\in W_{\rm loc}^{3,2}(B(0,2R/3))$ and so, by classical embeddings,
$\bar{v}_j^1\in C^{1,\alpha}(B(0,2R/3))$ for some $\alpha\in(0,1)$. Then, repeating the procedure we get that $\bar{v}_j^1\in C^{2,\alpha}(B(0,R/2))$ for some $\alpha\in(0,1)$ and, by Schauder estimate \cite[page 93]{GT},
\[
\|\bar{v}_{j}^{1}\|_{C^{2,\alpha}(B(0,R/4))}\leq C.
\]

\end{document}